\documentclass[a4paper,reqno]{amsart}

\usepackage[T1]{fontenc}
\usepackage{lmodern}
\usepackage[a4paper,margin=30mm]{geometry}
\usepackage{microtype}
\usepackage{amsmath,amssymb,amsthm,mathtools}
\usepackage{enumitem}
\usepackage{booktabs,array,longtable}
\usepackage{xcolor}
\usepackage{xurl}
\usepackage[colorlinks=true,linkcolor=blue!52!black,citecolor=blue!52!black,
           urlcolor=blue!58!black]{hyperref}
\usepackage{comment}

\numberwithin{equation}{section}

\makeatletter
\@ifundefined{subjclassname@2020}{%
  \@namedef{subjclassname@2020}{2020 Mathematics Subject Classification}%
}{}
\makeatother

\allowdisplaybreaks
\setlist[itemize]{topsep=5pt,itemsep=3pt,parsep=2pt}
\setlist[enumerate]{topsep=5pt,itemsep=3pt,parsep=2pt}

\definecolor{navy}{RGB}{24,55,92}
\definecolor{wine}{RGB}{125,37,50}
\definecolor{forest}{RGB}{25,95,66}

\newtheorem{theorem}{Theorem}[section]
\newtheorem{lemma}[theorem]{Lemma}
\newtheorem{proposition}[theorem]{Proposition}
\newtheorem{corollary}[theorem]{Corollary}

\theoremstyle{definition}
\newtheorem{definition}[theorem]{Definition}

\theoremstyle{remark}
\newtheorem{remark}[theorem]{Remark}

\newcommand{\C}{\mathbb C}

\newcommand{\Cc}{C_c}
\newcommand{\Cr}{C_r^*}

\newcommand{\supp}{\operatorname{supp}}

\newcommand{\vN}{\mathbin{\overline{\otimes}}}

\begin{document}

\title[Von Neumann algebras as reduced twisted groupoid algebras]{Von Neumann algebras as reduced \\ twisted groupoid $C^*$-algebras}

\author[Alcides Buss]{Alcides Buss}
\address[Alcides Buss]{Departamento de Matem\'atica, Universidade Federal de Santa Catarina, 88.040-900
Florian\'opolis-SC, Brazil}
	\email{alcides.buss@ufsc.br}
	\urladdr{http://mtm.ufsc.br/~alcides/}

\author[Luiz F. Garcia]{Luiz Felipe Garcia}
\address[Luiz F. Garcia]{Departamento de Matem\'atica, Universidade Federal de Santa Catarina, 88.040-900
Florian\'opolis-SC, Brazil}
	\email{lfgarcia98@gmail.com}

\author[Tomás Pacheco]{Tomás Pacheco}
\address[Tomás Pacheco]{Center for Mathematical Analysis, Geometry and Dynamical Systems,
Department of Mathematics, Instituto Superior T\'ecnico, University of Lisbon,
Av. Rovisco Pais 1, 1049-001 Lisboa, Portugal}
	\email{tomas.pacheco@tecnico.ulisboa.pt}

\keywords{Von Neumann algebras, twisted groupoid $C^*$-algebras,
\'etale groupoids, Haar systems, subhomogeneous $C^*$-algebras,
controlled propagation, Calkin algebras}

\subjclass[2020]{Primary 46L10; Secondary 46L05, 22A22}

\date{\today}

\begin{abstract}
We characterize the von Neumann algebras that are isomorphic, as
$C^*$-algebras, to reduced twisted $C^*$-algebras of locally compact
Hausdorff groupoids equipped with continuous Haar systems of full
support. They are exactly the subhomogeneous von Neumann algebras,
equivalently finite products of matrix algebras over abelian von
Neumann algebras. The groupoid can always be chosen compact,
principal, and \'etale, with trivial twist and counting Haar system.

We also prove that, for a groupoid in this class, the full
or reduced twisted algebra is unital if and only if the groupoid
is \'etale with compact unit space. This criterion reduces the
classification for general Haar systems to the \'etale case.

A further obstruction comes from controlled propagation, defined
through faithful representations into the norm closure of uniformly
sparse matrices. Every reduced twisted \'etale groupoid algebra and
its Borel completion have controlled propagation, including for
non-Hausdorff groupoids with locally compact Hausdorff unit space.
For every infinite-dimensional Hilbert space $H$, every
$*$-homomorphism from a nonzero quotient of $B(H)$ into an algebra
with controlled propagation is zero. In particular, neither $B(H)$
nor the Calkin algebra embeds into any of these reduced or Borel
groupoid algebras. We also prove that every von Neumann algebra with
controlled propagation is finite. No separability or countability
assumptions are required.
\end{abstract}

\maketitle

\tableofcontents

\section{Introduction}
\label{sec:introduction}

We determine which von Neumann algebras occur as reduced twisted
$C^*$-algebras of locally compact Hausdorff groupoids equipped with
continuous Haar systems of full support. We first establish the
classification for \'etale groupoids, using counting measures.
An appendix then proves a unitality criterion which reduces the
general Haar-system case to this setting.

The question concerns the norm-completed groupoid $C^*$-algebra
itself. When this algebra has a von Neumann algebra structure,
its canonical regular representations need not be normal.
In the \'etale case, we do not assume that the diagonal
$C_0(G^{(0)})$ is ultraweakly closed.

Groupoid models connect the structure of a $C^*$-algebra with
topological and dynamical data; see Renault's foundational
monograph \cite{Renault1980}. The reconstruction results of
Kumjian \cite{Kumjian1986} and Renault \cite{Renault2008}
make this connection particularly explicit through diagonal
and Cartan subalgebras. In particular, every separable
$C^*$-algebra with a Cartan subalgebra admits a reduced twisted
\'etale groupoid model.

There are also obstructions to such realizations. Buss and
Sims \cite{BussSims2021} showed that untwisted groupoid
$C^*$-algebras are isomorphic to their opposite algebras,
yielding examples of $C^*$-algebras with no untwisted groupoid
model. Allowing twists changes the realization problem
substantially.

In \cite{BGP2026}, we proved that $B(H)$, for an
infinite-dimensional Hilbert space $H$, is not the reduced twisted
$C^*$-algebra of any locally compact Hausdorff \'etale groupoid.
Here we determine all von Neumann algebras that admit such a
realization. We also establish propagation obstructions that apply
to embeddings and homomorphisms, extend to non-Hausdorff groupoids,
and include all nonzero quotients of $B(H)$.

Recall that a $C^*$-algebra is \emph{subhomogeneous} if there is
an integer $N\geq1$ such that every irreducible representation
has dimension at most $N$ (see \cite[IV.1.4]{Blackadar2006}).

\begin{theorem}[Main theorem]
\label{thm:main}
Let $M$ be a von Neumann algebra. The following conditions are
equivalent.
\begin{enumerate}
\item There are a locally compact Hausdorff \'etale groupoid $G$
      and a twist $\Sigma$ over $G$ such that, as $C^*$-algebras,
      \[
              M\cong C_r^*(G,\Sigma).
      \]

\item The $C^*$-algebra underlying $M$ is subhomogeneous.

\item There are an integer $N\geq1$ and abelian von Neumann
      algebras $A_1,\ldots,A_N$, some of which may be zero, such
      that
      \[
              M\cong
              \prod_{n=1}^{N}
              \bigl(A_n\vN M_n(\C)\bigr).
      \]
\end{enumerate}
The product in \textup{(3)} is finite, and $\vN$ denotes the
von Neumann algebra tensor product. When these conditions hold,
the groupoid in \textup{(1)} may be chosen compact and principal,
and the twist may be chosen trivial.
\end{theorem}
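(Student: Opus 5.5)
We will prove the cycle (3)$\Rightarrow$(1)$\Rightarrow$(2)$\Rightarrow$(3). The two implications not passing through groupoids are classical structure theory. For (2)$\Rightarrow$(3), decompose $M$ by type. A subhomogeneous von Neumann algebra contains no infinite family of pairwise orthogonal equivalent nonzero projections, since such a family would produce irreducible representations of arbitrarily large, or infinite, dimension. Hence $M$ is finite type I with no type $\mathrm{I}_n$ summand for $n>N$. The type decomposition then writes $M$ as $\prod_{n\le N} A_n\vN M_n(\C)$ with $A_n$ abelian. For (3)$\Rightarrow$(2), every irreducible representation of $\prod_n C(\Omega_n,M_n(\C))$ factors through a point evaluation, so it has dimension at most $N$.

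For (3)$\Rightarrow$(1), use Gelfand duality: $A_n\cong C(\Omega_n)$ with $\Omega_n$ compact Hausdorff (hyperstonean). Then $A_n\vN M_n(\C)\cong C(\Omega_n)\otimes M_n(\C)$ as $C^*$-algebras, since tensoring with a matrix algebra involves no completion issues. Take
\[
G=\bigsqcup_{n=1}^N \Omega_n\times R_n,\qquad R_n=\{1,\dots,n\}^2,
\]
where $R_n$ is the pair groupoid and each $\Omega_n$ is viewed as a space of units. Then $G$ is compact, principal, and étale, and $C_r^*(G)\cong\bigoplus_n C(\Omega_n)\otimes M_n(\C)$ with trivial twist. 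A finite direct sum equals the finite product, which gives (1) with the extra properties claimed.

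The substantive direction is (1)$\Rightarrow$(2). Suppose $M\cong C_r^*(G,\Sigma)$. First, $M$ is unital, so $G^{(0)}$ is compact, by the unitality criterion. Next we show that $M$ must be finite and contain no copy of $B(H)$ for infinite-dimensional $H$. For this we invoke the controlled propagation results stated in the abstract: $C_r^*(G,\Sigma)$ has controlled propagation, and a von Neumann algebra with controlled propagation is finite. Finiteness alone does not give subhomogeneity, however, since a type $\mathrm{II}_1$ algebra, or an infinite product $\prod_n M_n(\C)$, is finite but not subhomogeneous. So it remains to rule out two cases:
\begin{itemize}
\item[(a)] a type $\mathrm{II}_1$ summand;
\item[(b)] type $\mathrm{I}_n$ summands for infinitely many $n$.
\end{itemize}

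In both cases $M$ contains unital copies of $M_n(\C)$ for all $n$, implemented by matrix units in $M$. The plan is to get a contradiction from a quantitative version of controlled propagation. Elements of $C_r^*(G,\Sigma)$ are norm limits of elements supported on compact sets $K\subseteq G$, and compactness of $G^{(0)}$ makes each such $K$ coverable by finitely many bisections. The resulting uniformly sparse structure should bound the ``local rank'' of elements of bounded propagation. With a compact unit space we expect this to force a uniform bound on the sizes of families of mutually orthogonal, equivalent projections that appear by matrix units close to finite-propagation elements. Alternatively, one can argue via the ultraweak structure: in both (a) and (b), $M$ contains a unital copy of $\prod_n M_n(\C)$, and one would show that $\prod_n M_n(\C)$, or $\ell^\infty$-products of growing matrix algebras, cannot embed into an algebra with controlled propagation. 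This mirrors the $B(H)$ obstruction, using a diagonal block argument in the spirit of the paper's quotient-of-$B(H)$ theorem.

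We expect this last step, excluding unbounded matrix sizes in a finite von Neumann algebra realized as a groupoid algebra, to be the main obstacle. The first approach to try is the embedding of $\prod_n M_n(\C)$ together with the propagation estimates.
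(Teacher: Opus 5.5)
Your arguments for (2)$\Leftrightarrow$(3) and (3)$\Rightarrow$(1) are fine and essentially match the paper. The gap is in (1)$\Rightarrow$(2). After obtaining finiteness from controlled propagation, you never actually exclude a type~$\mathrm{II}_1$ summand or unbounded homogeneous degrees, and neither strategy you list works as stated.

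The first strategy cannot work. A compact unit space together with finite propagation does not bound families of matrix units: the CAR algebra is the reduced algebra of a compact principal \'etale (AF) groupoid and contains unital copies of $M_{2^k}(\C)$ for every $k$. Any valid argument must use something only the von Neumann structure provides, such as strong sums over infinitely many orthogonal central summands.

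The second strategy asserts that $\prod_n M_n(\C)$ has uncontrolled propagation. That would upgrade Theorem~\ref{thm:propagation-summary}(3) from ``finite'' to ``subhomogeneous'', which the paper does not claim, and you give no argument for it. The $B(H)$ argument does not transfer. It relies on an infinite orthonormal sequence $\rho(e_{j1})\xi_1$ inside one copy. Weak nullness of that sequence yields targets with pairwise disjoint finite supports, and once the source $\alpha_k$ is fixed one may take $N_k\geq k^2|\operatorname{supp}(\alpha_k)|$ targets. In the block $M_n(\C)$ you have only $n$ targets, their supports may overlap, and the source may itself be spread over about $n$ basis vectors. In a Fourier-type basis, with $\xi_j=n^{-1/2}\sum_x e^{2\pi i jx/n}\delta_x$, the spread vector $n^{-1/2}\sum_j\xi_j$ is the single basis vector $\delta_0$.

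What is missing is the groupoid input the paper uses beyond propagation. The central tool is quotient rigidity (Lemma~\ref{lem:vn-quotient-rigidity}). Von Neumann algebras are Grothendieck spaces, so a separable $C^*$-algebra that is a bounded linear quotient of one is reflexive and hence finite-dimensional. A countable-subgroup reduction extends this to corners of $M_k(\Cr(\Gamma,\omega))$.

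With this tool, every finite-dimensional irreducible representation is induced from a finite orbit $O$ with finite isotropy, so $\dim\pi=|O|\dim\sigma$. The orbit size is bounded by a spreading operator assembled as a strong sum over orthogonal homogeneous central summands, using that a bisection sends a source block into at most one range block. The isotropy dimension is bounded by Collins' Jordan bound together with a covering-number comparison.

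The type~II part is excluded through $C=W^*(Z\cup D)$. A bisection trace estimate rules out relative diffuseness, and a relative Maharam lemma then yields a relative atom $e$. The corner $eMe$ commutes with $D$, so it maps onto isotropy corners by a faithful family of $*$-homomorphisms. Quotient rigidity makes these targets finite-dimensional, and a type~II algebra cannot map nontrivially onto such a target. Without a substitute for these steps, (1)$\Rightarrow$(2) remains unproved.

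A minor point: $M$ need not contain \emph{unital} copies of $M_n(\C)$ or of $\prod_n M_n(\C)$, for example when it has an abelian summand. Only nonunital copies are available.
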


The abelian factors in \textup{(3)} are arbitrary; in particular,
they may have diffuse parts. The restriction is the uniform
bound on the matrix degrees. For example, the finite type~I
von Neumann algebra
\[
        \prod_{n\geq1}M_n(\C)
\]
does not admit a groupoid realization as in \textup{(1)}, because
its coordinate representations have unbounded dimensions.

The groupoid model for \textup{(3)} is explicit. Write
$A_n\cong C(X_n)$ for each nonzero factor, where $X_n$ is its
compact Hausdorff Gelfand spectrum. The finite disjoint union
of the groupoids
\[
        X_n\times\{1,\ldots,n\}\times\{1,\ldots,n\},
\]
with the pair-groupoid operations in the finite coordinates,
has reduced $C^*$-algebra $M$. Section~\ref{sec:classification}
verifies this identification and completes the proof of the
theorem.

The classification extends to locally compact Hausdorff groupoids
with continuous Haar systems of full support.
The key observation is the following unitality criterion, proved
in Appendix~\ref{sec:unitality-haar}. Let $G$ be a nonempty locally
compact Hausdorff groupoid with such a Haar system $\lambda$,
and let $\Sigma$ be a twist over $G$. Then
\[
\begin{aligned}
 C_r^*(G,\Sigma;\lambda)\text{ is unital}
 &\Longleftrightarrow
 C^*(G,\Sigma;\lambda)\text{ is unital}\\
 &\Longleftrightarrow
 G\text{ is \'etale and }G^{(0)}\text{ is compact}.
\end{aligned}
\]
Here the notation records the Haar system used to define the
convolution and the completions.

Since every nonzero von Neumann algebra is unital, any realization
in this broader setting necessarily has an \'etale groupoid with
compact unit space. Remark~\ref{rem:haar-counting-normalization}
then replaces the given Haar system by counting measures without
changing the reduced algebra up to isomorphism.
Consequently, Corollary~\ref{cor:haar-system-classification}
extends the classification of Theorem~\ref{thm:main} to all locally
compact Hausdorff groupoids equipped with continuous Haar systems
of full support.

A second set of results concerns a propagation property of
$C^*$-algebras. For a set $I$, let $S(I)\subseteq B(\ell^2(I))$
be the norm closure of the operators whose matrices have a
uniform finite bound on the number of nonzero entries in each
row and each column. The bound may depend on the operator.
A $C^*$-algebra $A$ has \emph{controlled propagation} if it admits
an injective $*$-homomorphism
\[
        \rho\colon A\longrightarrow B(\ell^2(I))
        \qquad\text{with}\qquad
        \rho(A)\subseteq S(I)
\]
for some set $I$. Otherwise, it has \emph{uncontrolled
propagation}. Thus controlled propagation is a property of the
abstract $C^*$-algebra, formulated through the existence of a
suitable faithful representation. It passes to $C^*$-subalgebras.

In Section~\ref{sec:standard-inputs}, we also define
$B_r^*(G,\Sigma)$, the reduced completion of the bounded Borel
sections of the associated Fell line bundle that vanish outside
a compact subset of $G$. The propagation arguments apply to this
algebra as well as to $C_r^*(G,\Sigma)$.

\begin{theorem}[Propagation obstructions]
\label{thm:propagation-summary}
The following statements hold.
\begin{enumerate}
\item Let $(G,\Sigma)$ be a twisted locally compact \'etale
      groupoid with locally compact Hausdorff unit space.
      The arrow space need not be Hausdorff. Both
      \[
              C_r^*(G,\Sigma)
              \qquad\text{and}\qquad
              B_r^*(G,\Sigma)
      \]
      have controlled propagation.

\item Let $H$ be an infinite-dimensional Hilbert space and let
      $J\subsetneq B(H)$ be a norm-closed two-sided ideal.
      Then $B(H)/J$ has uncontrolled propagation.
      Moreover, for every $C^*$-algebra $A$ with controlled
      propagation, every $*$-homomorphism
      \[
              B(H)/J\longrightarrow A
      \]
      is zero.

\item Every von Neumann algebra with controlled propagation
      is finite.
\end{enumerate}
No separability assumption is imposed on $H$, and the
homomorphisms in \textup{(2)} are not required to be unital.
\end{theorem}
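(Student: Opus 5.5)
The plan is to prove (1) by writing down explicit sparse regular representations. For (2), I would first reduce to the case $H=\ell^2(\mathbb N)$ and $J=0$, and then prove a rigidity statement for $S(I)$. Part (3) would then follow from (2) by a halving argument.

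\emph{Part (1).} For $x\in G^{(0)}$, let $\pi_x$ be the regular representation on $\ell^2(G_x,\Sigma)$, meaning the $\Sigma$-equivariant sections over $G_x$. Choose an arbitrary set-theoretic section of $\Sigma\to G$, with no continuity required. This identifies $\ell^2(G_x,\Sigma)$ with $\ell^2(G_x)$, and $\bigoplus_x\pi_x$ becomes a representation on $\ell^2(I)$ with $I=G$ as a set. It is faithful on $C_r^*(G,\Sigma)$ by definition of the reduced norm, and likewise on $B_r^*(G,\Sigma)$.

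Consider a section $f$ supported in an open bisection $U$. Its matrix entry at $(\gamma\eta,\eta)$ is $f(\gamma)$ times a twist scalar when $\gamma\in U$, and all other entries vanish. So each row and each column has at most one nonzero entry. This remains true in the non-Hausdorff case, because $C_c(G,\Sigma)$ is by definition the span of such bisection-supported sections.

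For the Borel algebra, cover a compact set $K$ by finitely many open bisections $U_1,\dots,U_m$. Use the Borel partition $K\cap U_k\setminus\bigcup_{l<k}U_l$ to split a bounded Borel section into $m$ pieces, each supported in a bisection. Its matrix then has at most $m$ nonzero entries per row and per column. Hence the dense $*$-subalgebra lands in the uniformly sparse operators, and the completion lands in $S(I)$.

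\emph{Reduction in Part (2).} Let $\varphi\colon B(H)/J\to A\subseteq S(I)$ be a nonzero $*$-homomorphism. The kernel of $B(H)\to A$ is a proper ideal $J'\supseteq J$, so $B(H)/J'$ embeds in $S(I)$. Proper ideals of $B(H)$ are determined by rank cardinality, and each is contained in the ideal of operators of rank $<\dim H$.

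Write $H\cong H\otimes\ell^2(\mathbb N)$. Then $T\mapsto 1\otimes T$ is a unital embedding $B(\ell^2(\mathbb N))\hookrightarrow B(H)$. Every nonzero $1\otimes T$ has rank at least $\dim H$, so this copy meets $J'$ trivially. Therefore it suffices to show the following.
\begin{quote}
$B(\ell^2(\mathbb N))$ admits no injective $*$-homomorphism into $S(I)$.
\end{quote}
The statement that $B(H)/J$ itself has uncontrolled propagation is the special case of $\varphi$ being a faithful representation.

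A properly infinite projection alone cannot be the obstruction. Paradoxical uniform Roe algebras, such as the one for the free group $F_2$, sit inside $S(I)$ and contain $\mathcal O_2$. So the argument must use uniform approximation of a single element encoding unboundedly large finite matrix pieces.

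\emph{Main step of (2).} This is the step I expect to be the main obstacle. My first attempt would be the following. Take $X=\bigoplus_n F_n\in B(\ell^2(\mathbb N))$, where $F_n$ is the $n\times n$ Fourier unitary on the $n$-th block of a partition of $\mathbb N$ into finite intervals. Approximate $\rho(X)$ within $\varepsilon$ by an operator $T$ with at most $d$ nonzero entries per row and per column. Do the same for the clock unitary $\bigoplus_n D_n$.

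Next, use the finite Heisenberg group generated by $F_n$ and $D_n$, which acts irreducibly on $\mathbb C^n$. For the projections $\rho(e^{(n)}_{jj})$, choose coordinates $i\in I$ carrying mass $\langle\rho(e^{(n)}_{11})\delta_i,\delta_i\rangle\ge 1/2$. Then compare two facts:
\begin{itemize}
\item $\rho(X)\delta_i$ is $\varepsilon$-close to a $d$-dimensional coordinate span;
\item $\rho(X)\delta_i$ must spread almost evenly over the $n$ orthogonal subspaces $\rho(e^{(n)}_{jj})\ell^2(I)$.
\end{itemize}
For $n\gg d/\varepsilon^2$ this should yield a counting contradiction.

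The delicate point is that the ranges of $\rho(e_{jj})$ need not be coordinate-aligned. I would try to handle this by also approximating the diagonal matrix units $\rho(\bigoplus_n \sum_j c_j e^{(n)}_{jj})$ by sparse operators, and averaging over the group.

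\emph{Part (3).} Suppose $M$ is an infinite von Neumann algebra. Then it contains a nonzero properly infinite projection. By halving, this projection is a sum of a sequence of mutually orthogonal projections that are equivalent to one another. The resulting matrix units give a unital embedding of $B(\ell^2(\mathbb N))$ into a corner of $M$. By (2) with $J=0$, this contradicts controlled propagation, since controlled propagation passes to $C^*$-subalgebras.
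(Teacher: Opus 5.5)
\textbf{Parts (1), (3), and the reduction in (2).} Part (1) is the paper's bisection count. Part (3) is correct by a slightly heavier route: you use the halving lemma for a properly infinite projection, whereas the paper only needs $f<e$ with $e\sim f$ and the projections $v^np_0(v^*)^n$. Your reduction of (2) to injective maps $B(\ell^2(\mathbb N))\to S(I)$ is valid. It does not need the classification of closed ideals of $B(H)$: the isometry $\xi\mapsto\xi\otimes\eta$ compresses $1\otimes T^*T$ to $\|T\eta\|^2 1_H$, so $1\otimes T\in J'$ would force $1_H\in J'$.

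\textbf{The gap.} The genuine gap is the main step of (2). It is the entire content of the obstruction, and you leave it heuristic. Three points fail.
\begin{enumerate}
\item Nothing provides a coordinate $i$ with $\langle\rho(e^{(n)}_{11})\delta_i,\delta_i\rangle\ge1/2$. The block-diagonal projection whose $N\times N$ blocks have all entries $1/N$ lies in $S_N(I)$, yet every diagonal entry is $1/N$, and $N$ may grow with $n$.
\item Spreading evenly over the $n$ orthogonal subspaces $\rho(e^{(n)}_{jj})\ell^2(I)$ gives no coordinate count, because these subspaces need not be coordinate-aligned. For $\beta_j=n^{-1/2}\sum_{k=0}^{n-1}\omega^{jk}\delta_k$, with $\omega$ a primitive $n$-th root of unity, the evenly spread vector $n^{-1/2}\sum_{j}\beta_j$ equals $\delta_0$. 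This is exactly the delicate point you flag, and ``averaging over the group'' is not an argument that resolves it.
\item With $X=\bigoplus_nF_n$ fixed in advance, the spread $n$ cannot be compared with the support sizes of vectors in the range of $\rho(e^{(n)}_{11})$, which depend on $\rho$. In fact $X$ alone witnesses nothing: conjugating by its blockwise eigenbasis gives an injective $\rho$ with $\rho(X)$ diagonal. You give no argument that adding the clock unitary changes this.
\end{enumerate}

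\textbf{The missing idea.} The witness must be built from $\rho$.
\begin{itemize}
\item The vectors $\xi_j=\rho(e_{j1})\xi_1$ are orthonormal, hence weakly null. Far along the sequence one therefore finds finitely supported approximants: $\alpha_k$ for a source $\xi_{s_k}$, and $\beta_{k,1},\dots,\beta_{k,N_k}$ for targets $\xi_{t_{k,j}}$, all with pairwise disjoint supports.
\item The number $N_k\ge k^2|\operatorname{supp}\alpha_k|$ is chosen only after $\alpha_k$ is known.
\item The partial isometry with $Th_{s_k}=N_k^{-1/2}\sum_j h_{t_{k,j}}$ satisfies $\rho(T)\alpha_k\approx\zeta_k:=N_k^{-1/2}\sum_j\beta_{k,j}$.
\item Any $b\in S_m(I)$ sends $\alpha_k$ into at most $m|\operatorname{supp}\alpha_k|$ coordinates. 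These meet at most that many of the disjoint target supports, so they capture at most $m/k^2$ of $\|\zeta_k\|^2$.
\end{itemize}
Disjointness of the approximants is what makes the target directions effectively coordinate-aligned. Choosing $N_k$ after $\alpha_k$ is what controls the support size. Together they give $\operatorname{dist}(\rho(T),S(I))=1$, which your fixed Fourier--clock pair does not deliver.
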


The first assertion follows from the finite-bisection structure
of compactly supported sections. In the source-regular
representations, a section supported in one bisection contributes
at most one nonzero matrix entry to each row and each column.
Finite sums and norm completion give controlled propagation,
uniformly over all source fibres. This argument uses the supports
of the sections and applies equally to the Borel completion.

The obstruction for $B(H)$ is proved in
Section~\ref{sec:sparse-obstruction}. In every faithful
representation of $B(H)$, we construct an operator at distance
one from $S(I)$. An amplification argument then shows that every
nonzero quotient of $B(H)$ contains a copy of $B(H)$. This gives
uncontrolled propagation for all such quotients and the
vanishing of homomorphisms in
Theorem~\ref{thm:propagation-summary}\textup{(2)}.
Finally, every nonfinite von Neumann algebra contains a
possibly nonunital copy of $B(\ell^2(\mathbb N))$, which proves
\textup{(3)}.

Taking $J=0$ or $J=\mathcal K(H)$ gives the following consequences
for $B(H)$ and the Calkin algebra
\[
        \mathcal Q(H)=B(H)/\mathcal K(H).
\]
Every $*$-homomorphism from either algebra into
$C_r^*(G,\Sigma)$ or $B_r^*(G,\Sigma)$ is zero. In particular,
neither algebra embeds as a $C^*$-subalgebra of a reduced or
Borel twisted \'etale groupoid algebra. These conclusions hold
in arbitrary infinite Hilbert-space dimension and allow
non-Hausdorff arrow spaces.

For full twisted \'etale groupoid algebras, the regular quotient
\[
        \lambda_{\mathrm{red}}\colon
        C^*(G,\Sigma)\longrightarrow C_r^*(G,\Sigma)
\]
shows that every $*$-homomorphism from $B(H)/J$ into
$C^*(G,\Sigma)$ has image contained in
$\ker\lambda_{\mathrm{red}}$. For nonempty $G$, this is a proper
ideal, so no such homomorphism is surjective. Thus no nonzero
quotient of $B(H)$, including $B(H)$ and $\mathcal Q(H)$, is
isomorphic to a full twisted \'etale groupoid $C^*$-algebra.

The unitality criterion in
Theorem~\ref{thm:unitality-haar} extends this non-isomorphism
conclusion to both full and reduced twisted algebras of locally
compact Hausdorff groupoids with continuous Haar systems of full
support. Indeed, an isomorphism with the unital algebra $B(H)/J$
would force the groupoid to be \'etale. After the normalization
in Remark~\ref{rem:haar-counting-normalization}, the regular
quotient would then give a nonzero homomorphism from $B(H)/J$
into an algebra with controlled propagation, a contradiction.

To complete the classification, we use the groupoid structure
to exclude the remaining finite von Neumann algebras.
A common input is the Banach-space quotient-rigidity lemma
proved in Section~\ref{sec:finite-dimensional-degree}: a corner
of a matrix algebra over a reduced twisted discrete group
algebra is finite-dimensional whenever it is a bounded linear
quotient of a von Neumann algebra. The quotient map need not
be multiplicative or normal.

Section~\ref{sec:finite-dimensional-degree} applies this lemma
to finite-dimensional irreducible representations of
$C_r^*(G,\Sigma)$. Such a representation comes from a finite
orbit $O$ and an irreducible projective representation $\sigma$
of a finite isotropy group, with
\[
        \dim\pi=|O|\dim\sigma.
\]
Separate estimates rule out unbounded orbit sizes and
unbounded projective isotropy dimensions along orthogonal
finite homogeneous central summands. This gives the uniform
degree bound for the type~I case.

Section~\ref{sec:type-II-obstruction} excludes finite type~II
central summands. The bisection estimates first rule out relative
diffuseness in the commutative von Neumann algebra generated by
the diagonal and the center. A relative Maharam lemma then
provides a relative atom. The corresponding corner has a
faithful family of quotients onto reduced isotropy corners.
Quotient rigidity makes these targets finite-dimensional,
which is incompatible with a nonzero type~II corner.

Section~\ref{sec:classification} combines finiteness,
the exclusion of type~II summands, and the bounded type~I degree
to prove Theorem~\ref{thm:main}.
Appendix~\ref{sec:unitality-haar} establishes the unitality criterion
and extends the classification to locally compact Hausdorff
groupoids with continuous Haar systems of full support.
The propagation results hold under the hypotheses of
Theorem~\ref{thm:propagation-summary}, which also allow
non-Hausdorff \'etale groupoids.
No countability or second-countability assumptions are imposed.

\section{Standard inputs and notation}
\label{sec:standard-inputs}

\subsection{\'Etale groupoids, twists, and regular representations}

For background on groupoid $C^*$-algebras and Haar systems,
see \cite{Renault1980}. For twists and their associated
reduced algebras, see \cite[Section~4]{Renault2008};
the Fell-bundle description and its relation to \'etale
groupoids are developed in \cite{BussExel2012}.
We recall the constructions needed below and specify our
conventions explicitly.

We first fix the conventions used throughout the paper. All groupoids
considered below have locally compact Hausdorff unit space, including in
the explicitly stated non-Hausdorff extensions.

\begin{definition}[\'Etale groupoid and bisection]
\label{def:etale-groupoid-bisection}
A \emph{topological groupoid} $G$ is a small category in which every arrow
is invertible, equipped with a topology for which multiplication and
inversion are continuous. Its unit space, with the subspace topology, is
denoted by $G^{(0)}$. For $\gamma\in G$, the source and range of $\gamma$
are denoted by $s(\gamma)$ and $r(\gamma)$. Thus $\gamma$ is viewed as an
arrow from $s(\gamma)$ to $r(\gamma)$, and a product $\alpha\beta$ is
defined exactly when $s(\alpha)=r(\beta)$.

The groupoid is \emph{\'etale} if the source map
$s\colon G\to G^{(0)}$ is a local homeomorphism. Since
$r=s\circ(\,\cdot\,)^{-1}$, the range map is then also a local
homeomorphism. An open set $U\subseteq G$ is an \emph{open bisection}
if both restrictions
\[
   s|_U\colon U\longrightarrow s(U),
   \qquad
   r|_U\colon U\longrightarrow r(U)
\]
are homeomorphisms onto open subsets of $G^{(0)}$.
\end{definition}

Every arrow of an \'etale groupoid has a neighbourhood which is an open
bisection. Consequently, every compact subset of $G$ is covered by
finitely many open bisections. Since the unit space is Hausdorff, every
open bisection is Hausdorff as well. These observations are the source of
the finite-propagation estimates below.

\begin{definition}[Twist and associated line bundle]
\label{def:twist-line-bundle}
Let $G$ be a locally compact groupoid with locally compact
Hausdorff unit space. A \emph{twist over $G$} is a central extension of topological groupoids
\[
   G^{(0)}\times\mathbb T
   \xrightarrow{\ \iota\ }\Sigma
   \xrightarrow{\ p\ }G,
\]
with the following properties:
\begin{enumerate}
\item $p$ is the identity on unit spaces and makes $\Sigma$ into a
      locally trivial principal $\mathbb T$-bundle over $G$;
\item $p^{-1}(G^{(0)})=\iota(G^{(0)}\times\mathbb T)$;
\item the circle action is central, in the sense that
      \[
         \iota(r(\sigma),z)\sigma
         =\sigma\iota(s(\sigma),z)
         \qquad(\sigma\in\Sigma,\ z\in\mathbb T).
      \]
\end{enumerate}
We write $z\sigma=\iota(r(\sigma),z)\sigma$.

The associated Fell line bundle (see \cite{Kumjian1998,SSW20,Renault2008}) is denoted by $L\to G$. Explicitly,
\[
   L=(\Sigma\times\mathbb C)/\mathbb T,
   \qquad
   z\cdot(\sigma,a)=(z\sigma,\overline z a),
\]
and the class of $(\sigma,a)$ is written
$[\sigma,a]\in L_{p(\sigma)}$. Multiplication and involution are induced
by
\[
   [\sigma,a][\tau,b]=[\sigma\tau,ab],
   \qquad
   [\sigma,a]^*=[\sigma^{-1},\overline a],
\]
where the product is defined when $\sigma$ and $\tau$ are composable.
The centrality condition makes these formulas independent of the chosen
representatives.
\end{definition}

Throughout the remainder of this section, $G$ is assumed to be
locally compact and \'etale, with locally compact Hausdorff unit space. The definition of a twist itself does not require
\'etaleness and will also be used in
Appendix~\ref{sec:unitality-haar}.

When the twist is trivial, $L$ is the product line bundle
$G\times\mathbb C$, and all formulas below reduce to the usual untwisted
formulas. Working with $L$ avoids choosing local scalar cocycles.

For a section $f$ of $L$, write
\[
   \operatorname{supp}^{\circ}(f)
   =\{\gamma\in G:f(\gamma)\ne0\}.
\]
We distinguish this set from its closure, the closed support of $f$.
In statements allowing non-Hausdorff $G$, the condition that $f$ be
\emph{compactly supported} will mean that it vanishes outside some compact
subset of $G$, or equivalently that
$\operatorname{supp}^{\circ}(f)$ is contained in such a subset. For
Hausdorff $G$, this agrees with compactness of the closed support.

If $G$ is Hausdorff, let $C_c(G,L)$ be the vector space of compactly
supported continuous sections of $L$. If $G$ is not Hausdorff,
$C_c(G,L)$ denotes the linear span of continuous sections on open
Hausdorff subsets whose supports are compact in those subsets, extended
by zero to $G$.

In either case, every $f\in C_c(G,L)$ is a finite sum
\[
   f=f_1+\cdots+f_m,
\]
where each $f_j$ is the extension by zero of a compactly supported
continuous section on an open bisection $U_j$ which trivializes $L$.
This follows from a finite bisection cover and a partition of unity;
in the non-Hausdorff case, apply this argument to each of the local
sections in the definition of $C_c(G,L)$. In particular, every element
of $C_c(G,L)$ vanishes outside a compact subset of $G$.

The convolution and involution formulas are
\begin{align*}
   (f*g)(\gamma)
      &=\sum_{\alpha\beta=\gamma}f(\alpha)g(\beta),\\
   f^*(\gamma)
      &=f(\gamma^{-1})^*.
\end{align*}
The sums are finite: the sections vanish outside compact sets, while
the source and range fibres are closed and discrete. Products and
inverses of open bisections are open bisections, so these operations
preserve $C_c(G,L)$ and make it a $*$-algebra.

For $u\in G^{(0)}$, put
\[
   G_u=s^{-1}(u),
   \qquad
   \mathcal H_u=\ell^2(G_u,L)
      :=\bigoplus_{\gamma\in G_u}L_\gamma.
\]
Thus a vector $\xi\in\mathcal H_u$ is a family
$(\xi_\gamma)_{\gamma\in G_u}$, with $\xi_\gamma\in L_\gamma$ and
\[
   \sum_{\gamma\in G_u}\|\xi_\gamma\|^2<\infty.
\]
The \emph{source-regular representation at $u$} is
\[
   \lambda_u\colon C_c(G,L)\longrightarrow B(\mathcal H_u),
   \qquad
   (\lambda_u(f)\xi)_\gamma
      =\sum_{\eta\in G_u}f(\gamma\eta^{-1})\xi_\eta.
\]
The product in each summand is the line-bundle multiplication
$L_{\gamma\eta^{-1}}L_\eta\to L_\gamma$.

If $f_j$ vanishes outside an open bisection, then $\lambda_u(f_j)$
is a weighted partial permutation of the one-dimensional summands of
$\mathcal H_u$, and
\[
   \|\lambda_u(f_j)\|\leq\|f_j\|_\infty.
\]
Applying a finite bisection decomposition therefore gives
\[
   \|\lambda_u(f)\|
   \leq\sum_{j=1}^m\|f_j\|_\infty,
\]
uniformly in $u$. The convolution and involution identities show that
each $\lambda_u$ is a $*$-representation.

\begin{definition}[Reduced twisted groupoid algebra]
\label{def:reduced-twisted-algebra}
The \emph{reduced norm} is
\[
   \|f\|_r=\sup_{u\in G^{(0)}}\|\lambda_u(f)\|.
\]
The completion of $C_c(G,L)$ for this norm is the
\emph{reduced twisted groupoid $C^*$-algebra}, denoted by
$C_r^*(G,\Sigma)$.
\end{definition}

To see that the regular representations separate sections, let
$\delta_u\in\mathcal H_u$ be the vector supported at the unit $u$,
with value $1\in L_u\cong\mathbb C$. For every $\gamma\in G$,
\[
   \bigl(\lambda_{s(\gamma)}(f)\delta_{s(\gamma)}\bigr)_\gamma
   =f(\gamma).
\]
Consequently, $\|f\|_\infty\leq\|f\|_r$.

Equivalently, if
\[
   \Lambda=\bigoplus_{u\in G^{(0)}}\lambda_u,
\]
then $C_r^*(G,\Sigma)$ is the norm closure of $\Lambda(C_c(G,L))$,
and $\Lambda$ extends to a faithful representation of
$C_r^*(G,\Sigma)$.

The restriction of the twist to the unit space is canonically trivial.
Extending sections on the unit space by zero therefore gives a canonical
copy
\[
   D=C_0(G^{(0)})\subseteq C_r^*(G,\Sigma),
\]
called the \emph{diagonal}, whose interplay with the groupoid structure is a central theme in the theory of Cartan subalgebras \cite{Kumjian1986, Renault2008}.

For the remainder of this paragraph, assume that $G$ is Hausdorff.
Restriction to the unit space extends to a faithful conditional
expectation
\[
   E_0\colon C_r^*(G,\Sigma)\longrightarrow D.
\]
Indeed, restriction is contractive for the reduced norm, fixes $D$,
and is $D$-bimodular. Positivity follows from
\[
   E_0(f^**f)(u)
   =\sum_{\gamma\in G_u}\|f(\gamma)\|^2
   \qquad(f\in C_c(G,L)).
\]
For faithfulness, suppose that $E_0(a^*a)=0$. Then
\[
   \|\lambda_u(a)\delta_u\|^2=E_0(a^*a)(u)=0
   \qquad(u\in G^{(0)}).
\]
Given $\eta\in G_u$, choose a unit vector $v_\eta\in L_\eta$.
Right multiplication by $v_\eta$ defines a unitary
\[
   R_\eta\colon\mathcal H_{r(\eta)}\longrightarrow\mathcal H_u,
   \qquad
   (R_\eta\xi)_{\alpha\eta}=\xi_\alpha v_\eta.
\]
It intertwines the corresponding regular representations and sends
$\delta_{r(\eta)}$ to the vector $v_\eta$ in the summand $L_\eta$.
Thus $\lambda_u(a)$ vanishes on every summand $L_\eta$.
The faithfulness of $\Lambda$ gives $a=0$.

\subsection{Sparse matrices and controlled propagation}

Let $I$ be an arbitrary set and let $(\delta_i)_{i\in I}$ be the standard
orthonormal basis of $\ell^2(I)$. For $T\in B(\ell^2(I))$, write
\[
   T_{ij}=\langle T\delta_j,\delta_i\rangle.
\]

\begin{definition}[Uniformly sparse operators]
\label{def:sparse-operator-algebra}
For an integer $m\geq1$, let $S_m(I)$ be the set of all
$T\in B(\ell^2(I))$ such that
\[
   \#\{i\in I:T_{ij}\ne0\}\leq m
   \quad\text{for every }j\in I
\]
and
\[
   \#\{j\in I:T_{ij}\ne0\}\leq m
   \quad\text{for every }i\in I.
\]
In words, every column and every row of the matrix of $T$ has at most
$m$ nonzero entries. Define
\[
   S(I)=\overline{\bigcup_{m\geq1}S_m(I)}^{\|\cdot\|}.
\]
\end{definition}

The set $S_m(I)$ need not be a linear subspace. Nevertheless,
\[
   S_m(I)+S_n(I)\subseteq S_{m+n}(I),
   \qquad
   S_m(I)S_n(I)\subseteq S_{mn}(I),
   \qquad
   S_m(I)^*=S_m(I).
\]
It follows that $S(I)$ is a $C^*$-subalgebra of $B(\ell^2(I))$.
Conjugation by a diagonal unitary or by a permutation unitary preserves
each $S_m(I)$. Thus rephasing or reordering the chosen basis does not
change these classes of operators.

For $I=\mathbb N$, the algebra $S(I)$ is the algebra
$B_f(\ell^2(\mathbb N))$ studied by Manuilov
\cite{Manuilov2019}. More generally, let
$\mathcal E_{\max}(I)$ consist of the subsets of $I\times I$
whose horizontal and vertical sections have uniformly
bounded finite cardinalities. This is the maximal uniformly
locally finite coarse structure on $I$; see
\cite[Section~2.3]{BragaFarahVignati2022}.
By the definition of the uniform Roe algebra,
\[
        S(I)=C_u^*(I,\mathcal E_{\max}(I)).
\]
For background on coarse structures and Roe algebras, see
\cite{Roe2003}. We use the explicit matrix description throughout.

\begin{definition}[Controlled propagation]
\label{def:controlled-propagation}
Let $A$ be a $C^*$-algebra.
\begin{enumerate}
\item A representation $\pi\colon A\to B(\ell^2(I))$ has
      \emph{controlled propagation} if $\pi(A)\subseteq S(I)$.
\item The algebra $A$ has \emph{controlled propagation} if it admits
      a faithful representation with controlled propagation.
\item The algebra $A$ has \emph{uncontrolled propagation} if no faithful
      representation of it has controlled propagation.
\end{enumerate}
\end{definition}

We should mention that controlled propagation is a property of abstract $C^*$-algebras, formulated through the existence of a suitable faithful representation. This concept is conceptually akin to the uniform sparsity conditions found in the theory of (uniform) Roe algebras and coarse geometry \cite{Roe2003}.

The definition is invariant under $C^*$-isomorphism. It requires the
existence of a faithful representation with the stated property;
other faithful representations may fail to have that property.

Controlled propagation passes to $C^*$-subalgebras by restricting a
faithful representation. Consequently, if a $C^*$-algebra $A$ contains
an injective copy of a $C^*$-algebra with uncontrolled propagation,
then $A$ also has uncontrolled propagation.

\begin{lemma}[Finite propagation]
\label{lem:finite-propagation}
Let $f\in C_c(G,L)$, and suppose that $f$ vanishes outside the union
of $m$ open bisections. After choosing unit vectors in the
one-dimensional fibres $L_\gamma$, the matrix of $\lambda_u(f)$
belongs to $S_m(G_u)$ for every $u\in G^{(0)}$.

Consequently, after making these choices simultaneously for all arrows,
the faithful direct sum of the source-regular representations satisfies
\[
   \Lambda\bigl(C_r^*(G,\Sigma)\bigr)\subseteq S(I_G),
   \qquad
   I_G:=\coprod_{u\in G^{(0)}}G_u.
\]
In particular, $C_r^*(G,\Sigma)$ has controlled propagation.
\end{lemma}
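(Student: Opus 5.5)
The argument has three steps: a matrix-entry computation for a single bisection, a sparsity count for a finite sum, and a passage to the completion.

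\textbf{Single bisection.} Fix, once and for all, a unit vector $v_\gamma\in L_\gamma$ for every arrow $\gamma\in G$. This identifies $\mathcal H_u$ with $\ell^2(G_u)$ through $\delta_\gamma\mapsto v_\gamma$. Under this identification,
\[
   \lambda_u(f)_{\gamma\eta}
   =\langle f(\gamma\eta^{-1})v_\eta, v_\gamma\rangle
   \qquad(\gamma,\eta\in G_u).
\]
So the $(\gamma,\eta)$ entry can be nonzero only if $\gamma\eta^{-1}\in\operatorname{supp}^{\circ}(f)$. Suppose $\operatorname{supp}^{\circ}(f)\subseteq U$ for a single open bisection $U$, and fix a column $\eta$. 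The arrows $\gamma\eta^{-1}$ all have source $r(\eta)$. Since $s|_U$ is injective, at most one of them lies in $U$, and $\gamma$ is determined by $\gamma\eta^{-1}$. Symmetrically, for a fixed row $\gamma$ the arrows $\gamma\eta^{-1}$ have range $r(\gamma)$, and injectivity of $r|_U$ gives at most one nonzero entry. Hence $\lambda_u(f)\in S_1(G_u)$.

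\textbf{Finite sums.} Now suppose $\operatorname{supp}^{\circ}(f)\subseteq U_1\cup\cdots\cup U_m$. Fix a column $\eta$. A nonzero entry in row $\gamma$ needs $\gamma\eta^{-1}\in U_j$ for some $j$. For each $j$, at most one arrow with source $r(\eta)$ lies in $U_j$, so there are at most $m$ such $\gamma$. The rows are handled the same way. This count is applied directly to $f$, so no partition of unity is needed and the bound is exactly $m$. This gives the first claim.

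\textbf{Passage to the completion.} Since the $v_\gamma$ were chosen simultaneously for all arrows, $\Lambda=\bigoplus_u\lambda_u$ acts on $\ell^2(I_G)$ with $I_G=\coprod_u G_u$. Its matrix is block diagonal, with blocks the matrices of the $\lambda_u$. A block-diagonal matrix whose blocks all lie in $S_m$ lies in $S_m(I_G)$, because rows and columns never cross blocks. Every $f\in C_c(G,L)$ vanishes outside a compact set, hence outside finitely many open bisections. Therefore $\Lambda(C_c(G,L))\subseteq\bigcup_m S_m(I_G)$.

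By Definition~\ref{def:reduced-twisted-algebra}, $\Lambda$ is isometric from the reduced norm to the operator norm. So $\Lambda(C_r^*(G,\Sigma))$ is the norm closure of $\Lambda(C_c(G,L))$, which lies in $S(I_G)$ because $S(I_G)$ is norm closed by definition. The representation $\Lambda$ is faithful on $C_r^*(G,\Sigma)$, so Definition~\ref{def:controlled-propagation} gives controlled propagation.

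The only delicate point is the non-Hausdorff case. There I would check that the count uses only injectivity of $s$ and $r$ on each bisection, which holds without Hausdorffness of $G$, and that elements of $C_c(G,L)$ vanish outside finitely many bisections, as noted earlier in the paper. Neither fact needs $G$ to be Hausdorff, so the argument goes through unchanged.
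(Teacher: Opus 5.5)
Your proposal is correct and follows essentially the same argument as the paper. Both proofs use the support condition $\gamma\eta^{-1}\in\operatorname{supp}^{\circ}(f)$ and injectivity of $s|_U$, $r|_U$ and $\gamma\mapsto\gamma\eta^{-1}$ to bound each column and each row by $m$, then block-diagonality of $\Lambda$, and finally the norm closure, using that $\Lambda$ is isometric for the reduced norm and faithful.
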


\begin{proof}
Fix $u\in G^{(0)}$. Choosing a unit vector in each line $L_\gamma$
identifies $\mathcal H_u$ with $\ell^2(G_u)$. The source-regular
formula shows that the matrix coefficient from the basis vector
indexed by $\eta\in G_u$ to the one indexed by $\gamma\in G_u$
can be nonzero only if
\[
   \gamma\eta^{-1}\in\operatorname{supp}^{\circ}(f).
   \tag{2.1}
\]

Let $U$ be one of the bisections in the chosen cover. If $\eta$ is fixed
and $\gamma\eta^{-1}\in U$, then
\[
   s(\gamma\eta^{-1})=r(\eta).
\]
Because $s|_U$ is injective, there is at most one element of $U$ with
this source. The map $\gamma\mapsto\gamma\eta^{-1}$ is injective,
so $U$ accounts for at most one nonzero entry in the column indexed
by $\eta$.

Similarly, if $\gamma$ is fixed and $\gamma\eta^{-1}\in U$, then
\[
   r(\gamma\eta^{-1})=r(\gamma).
\]
The injectivity of $r|_U$, together with the injectivity of
$\eta\mapsto\gamma\eta^{-1}$, shows that $U$ accounts for at most
one nonzero entry in the row indexed by $\gamma$.

Summing over the $m$ bisections proves that every row and every column
has at most $m$ nonzero entries. Hence
\[
   \lambda_u(f)\in S_m(G_u).
\]
The estimate is uniform in $u$. On identifying
$\bigoplus_u\mathcal H_u$ with $\ell^2(I_G)$, the block-diagonal
operator $\Lambda(f)$ therefore belongs to $S_m(I_G)$.

Every element of $C_c(G,L)$ vanishes outside a compact subset of $G$,
and every such compact subset has a finite open-bisection cover.
Thus
\[
   \Lambda(C_c(G,L))\subseteq\bigcup_{m\geq1}S_m(I_G).
\]
Taking norm closures yields the asserted inclusion for
$C_r^*(G,\Sigma)$. Since $\Lambda$ is faithful, it witnesses
controlled propagation.
\end{proof}

\subsection{Borel $C^*$-algebras of groupoids}

Continuity of the coefficients played no role in the preceding
row-and-column count. We now record the corresponding construction
for bounded Borel sections.

\begin{definition}[Compactly supported Borel algebra]
\label{def:compactly-supported-borel-algebra}
Let $B_c(G,L)$ be the space of Borel sections $f\colon G\to L$ such that
\[
   \|f\|_\infty:=\sup_{\gamma\in G}\|f(\gamma)\|<\infty
\]
and $f$ vanishes outside some compact subset $K\subseteq G$.
Here a Borel section is a Borel-measurable map satisfying
$f(\gamma)\in L_\gamma$ for every $\gamma\in G$.
\end{definition}

As specified above, the compactness condition concerns containment of
$\operatorname{supp}^{\circ}(f)$ in a compact set. This convention
applies whether or not $G$ is Hausdorff.

Every $f\in B_c(G,L)$ admits a finite bisection decomposition.
Indeed, choose a compact set $K$ outside which $f$ vanishes, and
cover $K$ by open bisections $U_1,\ldots,U_m$ which trivialize $L$.
Set
\[
   V_j=U_j\setminus\bigcup_{k<j}U_k,
   \qquad
   f_j=\mathbf1_{V_j}f
   \qquad(1\leq j\leq m),
\]
where multiplication by $\mathbf1_{V_j}$ is pointwise.
The sets $V_j$ are pairwise disjoint Borel bisections contained in
the respective $U_j$, and they cover $K$. Consequently,
\[
   f=f_1+\cdots+f_m,
   \qquad
   \operatorname{supp}^{\circ}(f_j)\subseteq K\cap V_j.
\]
Each $f_j$ is Borel, satisfies $\|f_j\|_\infty\leq\|f\|_\infty$,
and vanishes outside $K$, so $f_j\in B_c(G,L)$.
It also vanishes outside the open bisection $U_j$.
The sets $\operatorname{supp}^{\circ}(f_j)$ are pairwise disjoint;
no assertion about disjointness of their closures is needed.

The usual convolution and involution formulas make $B_c(G,L)$
a $*$-algebra:
\[
   (f*g)(\gamma)
      =\sum_{\alpha\beta=\gamma}f(\alpha)g(\beta),
   \qquad
   f^*(\gamma)=f(\gamma^{-1})^*.
\]
We verify the support and measurability assertions involved here.
Suppose that $f$ and $g$ vanish outside compact sets $K_f$ and $K_g$.
Then $f*g$ vanishes outside
\[
   K_fK_g
   =\{\alpha\beta:
       \alpha\in K_f,\ \beta\in K_g,\ s(\alpha)=r(\beta)\}.
\]
This set is compact. Indeed, the composable pairs form a closed subset
of $K_f\times K_g$, because $G^{(0)}$ is Hausdorff, and multiplication
is continuous.

The convolution sums are finite because the source and range fibres
are closed and discrete. If $K_f$ is covered by $m$ open bisections,
there are at most $m$ nonzero summands at any fixed $\gamma$, and hence
\[
   \|f*g\|_\infty
   \leq m\|f\|_\infty\|g\|_\infty.
\]
To check Borel measurability, decompose both sections using trivializing
open bisections. For two such pieces, vanishing outside $U$ and $V$,
respectively, their convolution vanishes outside $UV$. On $UV$ there
is a unique factorization with factors in $U$ and $V$, and those
factors depend continuously on the product. In the corresponding
line-bundle trivializations, the convolution is therefore a product
of Borel scalar functions and is Borel. Finite sums can likewise be
checked in finitely many trivializations covering the compact sets
outside which the sections vanish.

Inversion preserves Borel measurability and sends $K_f$ to the compact
set $K_f^{-1}$. Thus the involution also preserves $B_c(G,L)$.
Associativity and the $*$-identities follow from the corresponding
identities in the Fell line bundle and the finite convolution sums.

The source-regular formula extends to $B_c(G,L)$:
\[
   (\lambda_u(f)\xi)_\gamma
      =\sum_{\eta\in G_u}f(\gamma\eta^{-1})\xi_\eta.
\]
For a piece $f_j$ vanishing outside an open bisection, the resulting
operator is a weighted partial permutation and satisfies
\[
   \|\lambda_u(f_j)\|\leq\|f_j\|_\infty.
\]
A finite bisection decomposition therefore gives bounded operators
$\lambda_u(f)$, uniformly in $u$. The convolution identities show
that these operators define $*$-representations.

Set
\[
   \|f\|_r=\sup_{u\in G^{(0)}}\|\lambda_u(f)\|.
\]
For the decomposition constructed above,
\[
   \|f\|_r
   \leq\sum_{j=1}^m\|f_j\|_\infty
   \leq m\|f\|_\infty.
\]
The regular representations also separate Borel sections, since
\[
   \bigl(\lambda_{s(\gamma)}(f)
          \delta_{s(\gamma)}\bigr)_\gamma
   =f(\gamma).
\]
Consequently,
\[
   \|f\|_\infty\leq\|f\|_r.
\]
Thus $\|\cdot\|_r$ is a $C^*$-norm on $B_c(G,L)$.
Its completion is denoted by $B_r^*(G,\Sigma)$.

\begin{theorem}[Controlled propagation for the Borel completion]
\label{thm:controlled-propagation-borel}
Let $(G,\Sigma)$ be a twisted locally compact \'etale groupoid,
not necessarily Hausdorff. With
\[
   I_G=\coprod_{u\in G^{(0)}}G_u,
   \qquad
   \Lambda=\bigoplus_{u\in G^{(0)}}\lambda_u,
\]
one has
\[
   \Lambda\bigl(B_r^*(G,\Sigma)\bigr)\subseteq S(I_G).
\]
Consequently, $B_r^*(G,\Sigma)$ has controlled propagation.
The same is true of $C_r^*(G,\Sigma)$.
\end{theorem}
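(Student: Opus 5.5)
The plan is to repeat the row-and-column count of Lemma~\ref{lem:finite-propagation} for Borel sections, and then pass to the norm closure. First I fix, once and for all, a unit vector $v_\gamma\in L_\gamma$ for every arrow $\gamma\in G$. This choice need not be measurable or continuous, since only the zero pattern of matrices matters. It identifies $\bigoplus_u\mathcal H_u$ with $\ell^2(I_G)$, and $\Lambda$ becomes block diagonal with blocks $\lambda_u$. Conjugating by a diagonal unitary preserves each $S_m$, so the choice is harmless.

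Next, take $f\in B_c(G,L)$ and use the finite bisection decomposition $f=f_1+\cdots+f_m$ constructed above. Here $\operatorname{supp}^{\circ}(f)\subseteq V_1\cup\cdots\cup V_m\subseteq U_1\cup\cdots\cup U_m$ with each $U_j$ an open bisection. The matrix entry of $\lambda_u(f)$ from $\eta$ to $\gamma$ can be nonzero only if $\gamma\eta^{-1}\in\operatorname{supp}^{\circ}(f)$. The argument of Lemma~\ref{lem:finite-propagation} then applies verbatim, because it used only injectivity of $s|_{U_j}$ and $r|_{U_j}$ together with injectivity of $\gamma\mapsto\gamma\eta^{-1}$ and $\eta\mapsto\gamma\eta^{-1}$, and never continuity of $f$. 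Each $U_j$ therefore contributes at most one nonzero entry per row and per column. Hence $\lambda_u(f)\in S_m(G_u)$ for every $u$, and since $m$ does not depend on $u$, $\Lambda(f)\in S_m(I_G)$.

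It follows that $\Lambda(B_c(G,L))\subseteq\bigcup_m S_m(I_G)$. The norm $\|\cdot\|_r$ is by definition $\|\Lambda(\cdot)\|$, so $\Lambda$ extends isometrically to $B_r^*(G,\Sigma)$. Its image is then the norm closure of $\Lambda(B_c(G,L))$, which lies in $S(I_G)$. Because $\Lambda$ is isometric, it is faithful, and it witnesses controlled propagation. For $C_r^*(G,\Sigma)$, the inclusion $C_c(G,L)\subseteq B_c(G,L)$ gives the result directly. Alternatively, Lemma~\ref{lem:finite-propagation} already covers it, and in the non-Hausdorff case the same count applies to elements of $C_c(G,L)$, since these also vanish outside finitely many open bisections.

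The main point needing care is the non-Hausdorff setting. There I must check that the decomposition and the bisection count use only that $G^{(0)}$ is Hausdorff, so that compact sets are covered by finitely many open bisections and fibres are discrete. I must also check that faithfulness of $\Lambda$ on the completion is automatic from the definition of the norm, rather than from a conditional expectation, which is unavailable without Hausdorffness. Both points hold by the constructions already recorded, so I expect no genuine obstacle.
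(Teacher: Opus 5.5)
Your proposal is correct and follows essentially the same route as the paper. You fix unit vectors in the fibres, use the finite bisection decomposition of a Borel section to get $\Lambda(B_c(G,L))\subseteq\bigcup_m S_m(I_G)$, and pass to the norm closure using that $\Lambda$ is isometric for the defining reduced norm.

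The differences are cosmetic. The paper shows that each bisection piece $f_j$ lands in $S_1$ and then sums, whereas you count rows and columns over the union of the $m$ bisections directly. For $C_r^*(G,\Sigma)$, the paper uses the isometric inclusion $C_c(G,L)\subseteq B_c(G,L)$, checking Borel measurability of the extended-by-zero sections. Your alternative via Lemma~\ref{lem:finite-propagation} avoids that check altogether.
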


\begin{proof}
Choose unit vectors in all fibres $L_\gamma$, thereby identifying
the regular direct-sum Hilbert space with $\ell^2(I_G)$.

Let $f\in B_c(G,L)$ vanish outside one open bisection $U$.
The matrix coefficient of $\lambda_u(f)$ from $\eta$ to $\gamma$
can be nonzero only when $\gamma\eta^{-1}\in U$.
For fixed $\eta$, injectivity of $s|_U$ permits at most one such
$\gamma$; for fixed $\gamma$, injectivity of $r|_U$ permits at most
one such $\eta$. Hence
\[
   \lambda_u(f)\in S_1(G_u)
\]
for every $u$.

For an arbitrary $f\in B_c(G,L)$, use the preceding decomposition
\[
   f=f_1+\cdots+f_m,
\]
where each $f_j$ vanishes outside an open bisection. It follows that
\[
   \lambda_u(f)\in S_m(G_u)
   \quad\text{and}\quad
   \Lambda(f)\in S_m(I_G).
\]
Therefore
\[
   \Lambda(B_c(G,L))\subseteq\bigcup_{m\geq1}S_m(I_G).
\]
The representation $\Lambda$ is isometric for the defining reduced
norm. Taking norm closures gives
\[
   \Lambda\bigl(B_r^*(G,\Sigma)\bigr)\subseteq S(I_G),
\]
and faithfulness of $\Lambda$ gives controlled propagation.

Finally, every element of $C_c(G,L)$ is a finite sum of extensions
by zero of continuous sections with compact support on open Hausdorff
subsets. These extensions are Borel and vanish outside compact sets,
so
\[
   C_c(G,L)\subseteq B_c(G,L).
\]
The inclusion preserves convolution and involution and is isometric
for the common family of regular representations. It therefore
extends to an injective $*$-homomorphism
\[
   C_r^*(G,\Sigma)\longrightarrow B_r^*(G,\Sigma).
\]
Controlled propagation passes to $C^*$-subalgebras, giving the final
assertion.
\end{proof}

\section{Infinite von Neumann algebras}
\label{sec:sparse-obstruction}

We prove that $B(H)$ has uncontrolled propagation whenever $H$ is infinite-dimensional. The same is true of every nonzero quotient of $B(H)$, including the Calkin algebra.
These results give obstructions to embeddings into reduced and Borel
groupoid algebras, and to realizations as full groupoid algebras. We then
deduce that every von Neumann algebra with controlled propagation is finite.

Throughout this section, groupoids are locally compact and \'etale, with
locally compact Hausdorff unit space; the arrow space need not be
Hausdorff. Homomorphisms are not assumed to be unital unless this is
explicitly stated.

\subsection{The sparse obstruction for $B(H)$}

For the standard representation on $\ell^2(\mathbb N)$,
Manuilov constructs an isometry outside $S(\mathbb N)$
\cite[Proposition~2.1]{Manuilov2019}.
We prove below an obstruction that holds for every faithful
representation of $B(H)$, with no separability assumption
on either Hilbert space.

\begin{lemma}[Disjoint finite-support approximations]
\label{lem:disjoint-finite-localization}
Let $(\xi_n)_{n\ge1}$ be an orthonormal sequence in $\ell^2(I)$.
Given a finite set $F\subseteq I$, a number $\varepsilon>0$, and an
integer $n_0\ge1$, there are an index $n\ge n_0$ and a finitely
supported unit vector $\eta\in\ell^2(I)$ such that
\[
   \operatorname{supp}(\eta)\cap F=\varnothing,
   \qquad
   \|\xi_n-\eta\|<\varepsilon.
\]
Consequently, for every sequence $(\varepsilon_k)_{k\ge1}$ of positive
numbers, there are a subsequence $(\xi_{n_k})_{k\ge1}$ and finitely
supported unit vectors $(\eta_k)_{k\ge1}$ such that
\[
   \|\xi_{n_k}-\eta_k\|<\varepsilon_k
\]
and the sets $\operatorname{supp}(\eta_k)$ are pairwise disjoint.
\end{lemma}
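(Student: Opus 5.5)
The first assertion follows from weak convergence to zero. Let $P_F$ be the orthogonal projection onto $\ell^2(F)$. This projection has finite rank $|F|$. By Bessel's inequality applied to each basis vector $\delta_i$ with $i\in F$,
\[
   \sum_{n\ge1}\|P_F\xi_n\|^2
   =\sum_{i\in F}\sum_{n\ge1}|\langle\xi_n,\delta_i\rangle|^2
   \le|F|.
\]
Hence $\|P_F\xi_n\|\to0$, and we may choose $n\ge n_0$ with $\|P_F\xi_n\|<\varepsilon/3$.

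Put $\zeta=(1-P_F)\xi_n$, so $\|\xi_n-\zeta\|<\varepsilon/3$ and $\operatorname{supp}(\zeta)\cap F=\varnothing$. The vector $\zeta$ is square-summable, so it has countable support. Truncating it to a sufficiently large finite subset of its support gives a finitely supported vector $\zeta'$, still supported off $F$, with $\|\zeta-\zeta'\|<\varepsilon/3$. In particular $\zeta'\neq0$, since $\|\zeta'\|>1-2\varepsilon/3$; we may assume $\varepsilon<1$, as shrinking $\varepsilon$ only strengthens the conclusion.

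Set $\eta=\zeta'/\|\zeta'\|$. Then
\[
   \|\zeta'-\eta\|=\bigl|1-\|\zeta'\|\bigr|
   \le\|\xi_n-\zeta'\|<2\varepsilon/3 .
\]
This is only good for $4\varepsilon/3$ overall, so the construction should be run with $\varepsilon/2$ in place of $\varepsilon$. The resulting $\eta$ is a finitely supported unit vector, supported off $F$, with $\|\xi_n-\eta\|<\varepsilon$.

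The consequence follows by induction on $k$. Put $F_1=\varnothing$ and $n_0=1$, and apply the first part with $\varepsilon_1$ to obtain $n_1$ and $\eta_1$. Having chosen $n_1<\cdots<n_k$ and $\eta_1,\ldots,\eta_k$, let
\[
   F_{k+1}=\bigcup_{j\le k}\operatorname{supp}(\eta_j),
\]
which is finite. Apply the first part with $F=F_{k+1}$, $\varepsilon=\varepsilon_{k+1}$ and $n_0=n_k+1$. This produces $n_{k+1}>n_k$ and a finitely supported unit vector $\eta_{k+1}$ whose support avoids all earlier supports. Thus the supports are pairwise disjoint and $(\xi_{n_k})$ is a genuine subsequence.

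None of this needs separability of $\ell^2(I)$, because only the finite set $F$ and the countable support of a single vector are involved. The only mildly delicate step is the normalization estimate, which is handled by the factor-of-two adjustment of $\varepsilon$.
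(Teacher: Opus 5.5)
Your proof is correct and follows essentially the same route as the paper: Bessel's inequality gives $\|P_F\xi_n\|\to0$. You then remove the $F$-component, truncate to a finite support, and normalize. The disjoint sequence is built inductively by enlarging $F$ to the union of the earlier supports. The only difference is bookkeeping: the paper normalizes twice with a tolerance $\delta<\min\{1/2,\varepsilon/3\}$, whereas you normalize once and absorb the normalization loss by running the argument with $\varepsilon/2$.
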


\begin{proof}
By Bessel's inequality, an orthonormal sequence converges weakly to zero.
If $P_F$ is the coordinate projection onto $\ell^2(F)$, it follows that
\[
   \|P_F\xi_n\|^2
   =\sum_{i\in F}|\langle\xi_n,\delta_i\rangle|^2
   \longrightarrow0.
\]
Choose $0<\delta<\min\{1/2,\varepsilon/3\}$ and then $n\ge n_0$
with $\|P_F\xi_n\|<\delta$. Put $Q_F=1-P_F$. The vector
\[
   v=\frac{Q_F\xi_n}{\|Q_F\xi_n\|}
\]
is well-defined, has norm one, and is supported in $I\setminus F$.
Moreover,
\[
   \|\xi_n-v\|
   \le \|P_F\xi_n\|+\bigl|1-\|Q_F\xi_n\|\bigr|
   <2\delta,
\]
where $0\le1-\|Q_F\xi_n\|\le\|P_F\xi_n\|$ follows from the
reverse triangle inequality.

Truncating $v$ to a sufficiently large finite subset and normalizing gives
a finitely supported unit vector $\eta\in\ell^2(I\setminus F)$ with
$\|v-\eta\|<\delta$. Hence $\|\xi_n-\eta\|<3\delta<\varepsilon$.

For the final assertion, apply the first part inductively, taking $F$ to
be the union of the previously chosen supports and choosing each new
index larger than all the previous indices.
\end{proof}

\begin{proposition}[The sparse obstruction]
\label{prop:sparse-bh-obstruction}
Let $H$ be an infinite-dimensional Hilbert space, let $I$ be a set, and let
\[
   \rho\colon B(H)\longrightarrow B(\ell^2(I))
\]
be an injective $*$-homomorphism. There is a norm-one partial isometry
$T\in B(H)$ such that
\begin{equation}
   \operatorname{dist}\bigl(\rho(T),S_m(I)\bigr)=1
   \qquad(m\ge1).
   \tag{\thesection.1}
   \label{eq:sparse-bh-distance}
\end{equation}
In particular,
\[
   \operatorname{dist}\bigl(\rho(T),S(I)\bigr)=1,
   \qquad
   \rho(B(H))\not\subseteq S(I).
\]
\end{proposition}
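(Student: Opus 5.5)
The plan is to build $T$ from countably many orthonormal vectors in $H$. It sends normalized "spread-out" sums to single vectors. Pulling back along $\rho$, a sparse operator can then see only a negligible part of each spread-out sum. The upper bound is free: $0\in S_m(I)$ and $\|\rho(T)\|=\|T\|=1$ (injective $*$-homomorphisms are isometric), so $\operatorname{dist}(\rho(T),S_m(I))\le1$. Only $\ge1$ needs proof.

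\emph{Transporting orthonormal families.} Choose orthonormal vectors $e_j$ and $f_{j,k}$ in $H$ ($j\ge1$, $1\le k\le N_j$), with integers $N_j$ to be fixed during the construction. All of them are indexed by a countable set $\Omega$. Fix one $\omega_0\in\Omega$, and for $\omega\in\Omega$ let $u_\omega\in B(H)$ be the rank-one partial isometry sending the $\omega_0$-vector to the $\omega$-vector. Since $\rho(u_{\omega_0})$ is a nonzero projection, pick a unit vector $\xi$ in its range. The vectors $x_j=\rho(u_{e_j})\xi$ and $y_{j,k}=\rho(u_{f_{j,k}})\xi$ are then orthonormal in $\ell^2(I)$, by the matrix-unit relations $u_\omega^*u_{\omega'}=\delta_{\omega\omega'}u_{\omega_0}$. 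Set
\[
   \zeta_j=N_j^{-1/2}\sum_{k\le N_j}f_{j,k},
   \qquad
   T=\sum_j e_j\langle\,\cdot\,,\zeta_j\rangle,
\]
a norm-one partial isometry. By construction, $\rho(T)\bigl(N_j^{-1/2}\sum_k y_{j,k}\bigr)=x_j$.

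\emph{Inductive disjoint localization.} Next we choose the approximations and the $N_j$ together, following Lemma~\ref{lem:disjoint-finite-localization}. At stage $j$, first approximate $x_j$ by a finitely supported unit vector $\eta_j$ with finite support $F_j$. This support is disjoint from everything chosen before, and the error is at most $\varepsilon_j$. Only then choose $N_j\ge j^2|F_j|$, and approximate each $y_{j,k}$ by finitely supported unit vectors $\theta_{j,k}$ with pairwise disjoint supports, also disjoint from $F_j$ and from earlier supports, with errors at most $\varepsilon_j$. The vectors $x_j,y_{j,k}$ must be fixed in advance, so the needed $N_j$ are not yet known when they are chosen. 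To handle this, I would apply the lemma's inductive argument to a single infinite orthonormal sequence of candidates and relabel; since subsequences are allowed, this only requires reindexing. Put $\theta_j=N_j^{-1/2}\sum_k\theta_{j,k}$. Then $\|\theta_j-N_j^{-1/2}\sum_ky_{j,k}\|\le\varepsilon_j$ by orthogonality, and $\langle\rho(T)\theta_j,\eta_j\rangle\ge1-2\varepsilon_j$.

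\emph{The sparse estimate, which is the main point.} Let $S\in S_m(I)$. If $\|S\|>2$, then already $\|\rho(T)-S\|>1$, so assume $\|S\|\le2$. Row sparsity gives that $S^*\eta_j$ is supported in at most $m|F_j|$ coordinates. These coordinates meet at most $m|F_j|$ of the disjointly supported vectors $\theta_{j,k}$. Cauchy--Schwarz over those $k$ then gives
\[
   |\langle S\theta_j,\eta_j\rangle|
   \le N_j^{-1/2}\sum_{k\text{ hit}}|\langle\theta_{j,k},S^*\eta_j\rangle|
   \le\|S\|\sqrt{m|F_j|/N_j}\le 2\sqrt m/j .
\]
Hence
\[
   \|\rho(T)-S\|\ge\operatorname{Re}\langle(\rho(T)-S)\theta_j,\eta_j\rangle\ge1-2\varepsilon_j-2\sqrt m/j,
\]
and letting $j\to\infty$ gives \eqref{eq:sparse-bh-distance}. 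The step I expect to need the most care is the ordering of choices: $F_j$ must be fixed before $N_j$, so that $N_j$ dominates $m|F_j|$ uniformly in $m$. Once \eqref{eq:sparse-bh-distance} holds for all $m$, the distance to $\bigcup_mS_m(I)$, and hence to its closure $S(I)$, equals $1$, so $\rho(B(H))\not\subseteq S(I)$.
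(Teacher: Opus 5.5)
Your argument is the paper's proof with $T$ replaced by its adjoint. The paper sends the single vector $h_{s_k}$ to a spread-out vector and counts columns of $b$ on $\operatorname{supp}(\alpha_k)$. You send a spread-out vector to a single one and count rows through $S^*\eta_j$. Since $S_m(I)^*=S_m(I)$, the two are equivalent, and your estimate $|\langle S\theta_j,\eta_j\rangle|\le\|S\|\sqrt{m|F_j|/N_j}$ is correct.

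There is one step that fails as written. You claim that $\bigl\|\theta_j-N_j^{-1/2}\sum_k y_{j,k}\bigr\|\le\varepsilon_j$ follows ``by orthogonality'' from $\|\theta_{j,k}-y_{j,k}\|\le\varepsilon_j$. The families $(\theta_{j,k})_k$ and $(y_{j,k})_k$ are each orthonormal, but the error vectors $\theta_{j,k}-y_{j,k}$ need not be orthogonal to one another. Their normalized sum can have norm of order one.

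For a concrete counterexample, take distinct coordinates $i_1,\dots,i_N$ and set $\theta_k=\delta_{i_k}$ and $v=N^{-1/2}\sum_k\delta_{i_k}$. Let $y_k=\delta_{i_k}-\frac2N\sum_l\delta_{i_l}$, the reflection of $\delta_{i_k}$ in $v^\perp$.
\begin{itemize}
\item The family $(y_k)$ is orthonormal.
\item Each error is small: $\|y_k-\theta_k\|=2/\sqrt N$, which is below any fixed $\varepsilon_j$ once $N$ is large.
\item The averages are opposite: $N^{-1/2}\sum_k y_k=-v=-N^{-1/2}\sum_k\theta_k$, so the averaged error equals $2$.
\end{itemize}
This is exactly the regime your construction enters, because $\varepsilon_j$ is fixed and $N_j$ is then taken large. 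In such a case your bound $\operatorname{Re}\langle\rho(T)\theta_j,\eta_j\rangle\ge1-2\varepsilon_j$ is lost.

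The repair is what the paper does in its target-approximation condition (3.3), and your ordering of choices already allows it. Since $N_j$ is fixed before the $\theta_{j,k}$ are chosen, require $\|\theta_{j,k}-y_{j,k}\|<\varepsilon_j/\sqrt{N_j}$. The triangle inequality then gives
\[
   \Bigl\|\theta_j-N_j^{-1/2}\sum_{k=1}^{N_j} y_{j,k}\Bigr\|
   \le N_j^{-1/2}\sum_{k=1}^{N_j}\|\theta_{j,k}-y_{j,k}\|
   <\varepsilon_j .
\]
With this change, the rest of your argument goes through: the reindexing along a single orthonormal sequence, the identity $\rho(T)\bigl(N_j^{-1/2}\sum_k y_{j,k}\bigr)=x_j$, and the limit $j\to\infty$.
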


\begin{proof}
\noindent\emph{Step 1: an orthonormal sequence in the representation space.}
Choose an orthonormal sequence $(h_j)_{j\ge1}$ in $H$, and define
rank-one matrix units by
\[
   e_{ij}\zeta=\langle\zeta,h_j\rangle h_i.
\]
Since $\rho$ is injective, $\rho(e_{11})\ne0$. Choose a unit vector
$\xi_1$ in its range and put
\[
   \xi_j=\rho(e_{j1})\xi_1\qquad(j\ge1).
\]
The matrix-unit identities give
\[
   \langle\xi_i,\xi_j\rangle
   =\langle\xi_1,\rho(e_{1i}e_{j1})\xi_1\rangle
   =\delta_{ij}.
\]
Thus $(\xi_j)$ is an orthonormal sequence in $\ell^2(I)$.

\smallskip
\noindent\emph{Step 2: choosing source and target vectors.}
Fix positive numbers $\varepsilon_k\to0$, for example
$\varepsilon_k=2^{-k}$. We recursively choose distinct indices
\[
   s_k,t_{k,1},\ldots,t_{k,N_k}
\]
and finitely supported unit vectors
\[
   \alpha_k,\beta_{k,1},\ldots,\beta_{k,N_k}
\]
such that
\begin{align}
   \|\xi_{s_k}-\alpha_k\|&<\varepsilon_k,
   \tag{\thesection.2}
   \label{eq:sparse-source-approx}\\
   \|\xi_{t_{k,j}}-\beta_{k,j}\|
      &<\frac{\varepsilon_k}{\sqrt{N_k}}
      \qquad(1\le j\le N_k),
   \tag{\thesection.3}
   \label{eq:sparse-target-approx}\\
   N_k&\ge k^2\,|\operatorname{supp}(\alpha_k)|.
   \tag{\thesection.4}
   \label{eq:sparse-target-count}
\end{align}
All supports chosen at all stages are required to be pairwise disjoint,
and no source or target index is reused.

To carry out stage $k$, apply
Lemma~\ref{lem:disjoint-finite-localization} beyond all indices already
used, avoiding the union of all supports already chosen. This gives
$s_k$ and $\alpha_k$. Once $\alpha_k$ is known, choose $N_k$ satisfying
\eqref{eq:sparse-target-count}. Apply the lemma another $N_k$ times with
tolerance $\varepsilon_k/\sqrt{N_k}$, each time avoiding all previously
chosen supports and moving farther along the sequence. This gives the
required target vectors.

\smallskip
\noindent\emph{Step 3: constructing a single operator.}
Put
\[
   v_k=\frac1{\sqrt{N_k}}
           \sum_{j=1}^{N_k}h_{t_{k,j}}.
\]
The vectors $(v_k)$ are orthonormal, as are the source vectors
$(h_{s_k})$. Define $T$ by
\[
   Th_{s_k}=v_k\quad(k\ge1),
   \qquad
   T=0\quad\text{on }
   \left(\overline{\operatorname{span}\{h_{s_k}:k\ge1\}}\right)^\perp.
\]
Then $T$ is a partial isometry of norm one.

Define also
\[
   \zeta_k=\frac1{\sqrt{N_k}}
               \sum_{j=1}^{N_k}\beta_{k,j}.
\]
Disjointness of the supports gives $\|\zeta_k\|=1$. The finite identity
\[
   Te_{s_k,1}
   =\frac1{\sqrt{N_k}}
       \sum_{j=1}^{N_k}e_{t_{k,j},1}
\]
implies
\[
   \rho(T)\xi_{s_k}
   =\frac1{\sqrt{N_k}}
       \sum_{j=1}^{N_k}\xi_{t_{k,j}}.
\]
This uses only multiplicativity of $\rho$; no normality is needed.
By \eqref{eq:sparse-target-approx},
\[
   \|\rho(T)\xi_{s_k}-\zeta_k\|
   \le\frac1{\sqrt{N_k}}
        \sum_{j=1}^{N_k}
          \|\xi_{t_{k,j}}-\beta_{k,j}\|
   <\varepsilon_k.
\]
Since $\|\rho(T)\|=1$, we obtain
\begin{equation}
   \|\rho(T)\alpha_k-\zeta_k\|
   \le\|\alpha_k-\xi_{s_k}\|
       +\|\rho(T)\xi_{s_k}-\zeta_k\|
   <2\varepsilon_k.
   \tag{\thesection.5}
   \label{eq:sparse-spread-approx}
\end{equation}

\smallskip
\noindent\emph{Step 4: estimating the distance from sparse operators.}
Fix $m\ge1$ and $b\in S_m(I)$, and let
\[
   L_k=\operatorname{supp}(b\alpha_k).
\]
Since each column of $b$ has at most $m$ nonzero entries,
\begin{equation}
   |L_k|\le m\,|\operatorname{supp}(\alpha_k)|.
   \tag{\thesection.6}
   \label{eq:sparse-column-count}
\end{equation}
Let $P_{L_k}$ be the coordinate projection onto $\ell^2(L_k)$.
At most $|L_k|$ of the pairwise disjoint supports of the vectors
$\beta_{k,j}$ can meet $L_k$, and each $\|P_{L_k}\beta_{k,j}\|$
is at most one. Therefore
\begin{equation}
\begin{split}
   \|P_{L_k}\zeta_k\|^2
   &=\frac1{N_k}\sum_{j=1}^{N_k}
                      \|P_{L_k}\beta_{k,j}\|^2\\
   &\le\frac{|L_k|}{N_k}
    \le\frac{m\,|\operatorname{supp}(\alpha_k)|}{N_k}
    \le\frac{m}{k^2}.
\end{split}
   \tag{\thesection.7}
   \label{eq:sparse-captured-mass}
\end{equation}

Now $\alpha_k$ is a unit vector and
$(1-P_{L_k})b\alpha_k=0$. For $k^2\ge m$, it follows that
\begin{equation}
\begin{split}
   \|\rho(T)-b\|
   &\ge\|(1-P_{L_k})(\rho(T)-b)\alpha_k\|\\
   &=\|(1-P_{L_k})\rho(T)\alpha_k\|\\
   &\ge\|(1-P_{L_k})\zeta_k\|
         -\|\rho(T)\alpha_k-\zeta_k\|\\
   &\ge\sqrt{1-\frac{m}{k^2}}-2\varepsilon_k.
\end{split}
   \tag{\thesection.8}
   \label{eq:sparse-distance-bound}
\end{equation}
Letting $k\to\infty$ with $m$ and $b$ fixed gives
$\|\rho(T)-b\|\ge1$. Since $b\in S_m(I)$ was arbitrary, and
$0\in S_m(I)$ while $\|\rho(T)\|=1$, this proves
\eqref{eq:sparse-bh-distance}. The same distance holds for the union
of the $S_m(I)$ and for its norm closure $S(I)$.
\end{proof}

\begin{corollary}
\label{cor:bh-uncontrolled}
If $H$ is infinite-dimensional, then $B(H)$ has uncontrolled propagation.
\end{corollary}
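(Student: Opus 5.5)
The corollary follows directly from Proposition~\ref{prop:sparse-bh-obstruction} and Definition~\ref{def:controlled-propagation}. We argue by contradiction. Suppose $B(H)$ had controlled propagation. Then there would be a set $I$ and a faithful representation
\[
   \rho\colon B(H)\longrightarrow B(\ell^2(I))
   \qquad\text{with}\qquad
   \rho(B(H))\subseteq S(I).
\]
Faithful means injective, so $\rho$ satisfies the hypotheses of Proposition~\ref{prop:sparse-bh-obstruction}.

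The proposition produces a norm-one partial isometry $T\in B(H)$ with
\[
   \operatorname{dist}\bigl(\rho(T),S(I)\bigr)=1.
\]
Since $S(I)$ is norm closed, every element of $S(I)$ is at distance zero from $S(I)$. Hence $\rho(T)\notin S(I)$, which contradicts $\rho(B(H))\subseteq S(I)$. The proposition states $\rho(B(H))\not\subseteq S(I)$ explicitly, so this step is immediate.

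No step here is a genuine obstacle: all the analytic work sits in Proposition~\ref{prop:sparse-bh-obstruction}. The one point to confirm is that the proposition applies to \emph{every} injective $*$-homomorphism, with an arbitrary index set $I$ and no normality, unitality or separability assumptions. This is exactly how it is stated. Its proof uses only the matrix-unit relations and multiplicativity of $\rho$, as noted in Step~3. Hence no faithful representation of $B(H)$ has range inside $S(I)$, which is precisely uncontrolled propagation in the sense of Definition~\ref{def:controlled-propagation}.
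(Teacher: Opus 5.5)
Your proof is correct and matches the paper's, which simply notes that Proposition~\ref{prop:sparse-bh-obstruction} excludes every faithful representation of $B(H)$ whose image lies in some $S(I)$. The appeal to norm closedness of $S(I)$ is unnecessary, since any element of a set is at distance zero from that set, but it does no harm.
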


\begin{proof}
Proposition~\ref{prop:sparse-bh-obstruction} excludes every faithful
representation whose image is contained in some $S(I)$.
\end{proof}

\subsection{Quotients and groupoid algebras}

The next observation allows us to handle arbitrary Hilbert-space
dimension without using a classification of the ideals of $B(H)$.

\begin{proposition}[Copies of $B(H)$ in its nonzero quotients]
\label{prop:bh-in-nonzero-quotients}
Let $H$ be infinite-dimensional and let $J\subsetneq B(H)$ be a
norm-closed two-sided ideal. Then $B(H)/J$ contains a unital copy of
$B(H)$. Consequently, every nonzero quotient of $B(H)$ has
uncontrolled propagation.
\end{proposition}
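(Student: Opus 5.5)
The plan is an amplification trick. Since $H$ is infinite-dimensional, $H\cong H\otimes\ell^2(\kappa)$ with $\kappa=\dim H$. More elementarily, we can fix a family of isometries $(V_i)_{i\in\kappa}$ in $B(H)$ with pairwise orthogonal ranges summing to $H$, that is, $V_i^*V_j=\delta_{ij}1$ and $\sum_i V_iV_i^*=1$ strongly. Define the unital $*$-homomorphism
\[
   \phi\colon B(H)\longrightarrow B(H),
   \qquad
   \phi(x)=\sum_{i\in\kappa}V_ixV_i^*,
\]
where the sum converges strongly. Under $H\cong H\otimes\ell^2(\kappa)$ this is $x\mapsto x\otimes1$, so it is an injective unital $*$-homomorphism. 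Let $q\colon B(H)\to B(H)/J$ be the quotient map. We claim that $q\circ\phi$ is injective. Granting this, $q\circ\phi$ is a unital isometric embedding of $B(H)$ into $B(H)/J$. The consequence then follows at once: by Corollary~\ref{cor:bh-uncontrolled} and the fact that controlled propagation passes to $C^*$-subalgebras, $B(H)/J$ has uncontrolled propagation.

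For injectivity, note that $\ker(q\circ\phi)=\phi^{-1}(J)$ is a closed two-sided ideal of $B(H)$. Suppose it is nonzero. Then it contains a nonzero positive element $x$, and after functional calculus it contains a nonzero projection $p$ (take a spectral projection $p=\mathbf 1_{[\varepsilon,\infty)}(x)$, so that $p\le \varepsilon^{-1}x$; then $p=p\,(\varepsilon^{-1}x)^{1/2}\cdots$ belongs to the ideal by the standard hereditary argument, or more simply $p=x\,g(x)$ with $g$ bounded Borel, which lies in the ideal because ideals of $B(H)$ are closed under multiplication by the von Neumann algebra). Choose a unit vector $h\in pH$, so $e=h\otimes h\le p$ is a rank-one projection, and $e=epe$ lies in the ideal. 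Then every rank-one operator, and hence every finite-rank operator, lies in $\phi^{-1}(J)$.

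The key step, which we expect to be the main obstacle, is to produce the identity inside $\phi^{-1}(J)$. Choose an isometry $W\in B(H)$ and unit vectors such that $1=\sum_i V_iV_i^*$ realizes $\phi(e)=\sum_i V_ieV_i^*$ as a projection of rank $\kappa=\dim H$, that is, of the same dimension as $H$. Hence there is an isometry $W$ with $WW^*=\phi(e)$, and
\[
   1=W^*\phi(e)W\in J,
\]
since $\phi(e)\in J$ and $J$ is a two-sided ideal. This contradicts $J\ne B(H)$. The point needing care is exactly the cardinality count: $\phi(e)$ has range $\bigoplus_i V_i(eH)$ of dimension $\kappa$. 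This is where the amplification by $\kappa$ copies, rather than by countably many copies, is essential when $H$ is nonseparable. Normality of $q$ is never used; only the algebraic ideal property of $J$ is.
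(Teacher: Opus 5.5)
Your proof is correct and is essentially the paper's argument: the paper uses the same amplification $T\mapsto W(T\otimes I_H)W^*$ for a unitary $W\colon H\otimes H\to H$, and its injectivity step compresses by the isometry $V_\eta\xi=W(\eta\otimes\xi)$, which plays the role of your isometry onto $\phi(e)H$ with $e$ the projection onto $\C\eta$. The only difference is that the paper applies this compression directly to $\Theta(T)^*\Theta(T)$, obtaining $V_\eta^*\Theta(T)^*\Theta(T)V_\eta=\|T\eta\|^2I_H$, which avoids your detour through the ideal $\phi^{-1}(J)$, spectral projections, and a rank-one projection.
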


\begin{proof}
For every infinite Hilbert-space dimension $\kappa$, one has
$\kappa\cdot\kappa=\kappa$. We may therefore choose a unitary
\[
   W\colon H\otimes H\longrightarrow H.
\]
Let $q_J\colon B(H)\to B(H)/J$ be the quotient map and define
\[
   \Theta(T)=W(T\otimes I_H)W^*,
   \qquad
   \Phi(T)=q_J(\Theta(T)).
\]
Both maps are unital $*$-homomorphisms. To prove that $\Phi$ is
injective, suppose that $T\ne0$ and choose a unit vector $\eta\in H$
with $T\eta\ne0$. The operator
\[
   V_\eta\colon H\longrightarrow H,
   \qquad V_\eta\xi=W(\eta\otimes\xi),
\]
is an isometry and satisfies
\[
   V_\eta^*\Theta(T)^*\Theta(T)V_\eta
   =\|T\eta\|^2 I_H.
\]
If $\Phi(T)=0$, then $\Theta(T)\in J$. The displayed identity and
the ideal property would imply $I_H\in J$, contrary to $J\ne B(H)$.
Thus $\Phi$ is injective.

By Corollary~\ref{cor:bh-uncontrolled}, the embedded copy of $B(H)$
has uncontrolled propagation. Since controlled propagation passes to
$C^*$-subalgebras, $B(H)/J$ has uncontrolled propagation as well.
\end{proof}

\begin{corollary}[Calkin algebras]
\label{cor:calkin-uncontrolled}
For every infinite-dimensional Hilbert space $H$, the Calkin algebra
\[
   \mathcal Q(H)=B(H)/\mathcal K(H)
\]
contains a unital copy of $B(H)$ and has uncontrolled propagation.
\end{corollary}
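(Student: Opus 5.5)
This is the special case $J=\mathcal K(H)$ of Proposition~\ref{prop:bh-in-nonzero-quotients}, so the plan is to check the hypothesis $J\subsetneq B(H)$ and then quote that proposition.

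First I verify that $\mathcal K(H)$ is a norm-closed two-sided ideal of $B(H)$. This is standard: compact operators form a norm-closed subspace, and compactness is preserved under composition with bounded operators on either side.

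Second I verify that the ideal is proper. Since $H$ is infinite-dimensional, its closed unit ball is not norm-compact, so $I_H\notin\mathcal K(H)$. Hence $\mathcal K(H)\ne B(H)$, and the quotient $\mathcal Q(H)$ is nonzero.

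Proposition~\ref{prop:bh-in-nonzero-quotients} then applies with $J=\mathcal K(H)$. It gives the unital injective $*$-homomorphism $\Phi=q_{\mathcal K(H)}\circ\Theta\colon B(H)\to\mathcal Q(H)$, where $\Theta(T)=W(T\otimes I_H)W^*$, together with uncontrolled propagation of the quotient.

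For a self-contained account, the only substantive point is already inside that proposition's proof. If $T\ne0$ and $\Theta(T)$ were compact, then the compression $V_\eta^*\Theta(T)^*\Theta(T)V_\eta=\|T\eta\|^2I_H$ would be compact and nonzero, which is impossible in infinite dimension.

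Uncontrolled propagation then follows by combining Corollary~\ref{cor:bh-uncontrolled} with the fact that controlled propagation passes to $C^*$-subalgebras. I do not expect any real obstacle, since all the work lies in Proposition~\ref{prop:sparse-bh-obstruction} and Proposition~\ref{prop:bh-in-nonzero-quotients}.
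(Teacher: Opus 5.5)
Your proposal is correct and is the paper's proof: the paper applies Proposition~\ref{prop:bh-in-nonzero-quotients} to the proper ideal $\mathcal K(H)$. Your check that $I_H\notin\mathcal K(H)$ and your recap of why $\Theta(T)$ cannot be compact simply unpack the hypotheses and argument of that proposition.
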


\begin{proof}
Apply Proposition~\ref{prop:bh-in-nonzero-quotients} to the proper ideal
$\mathcal K(H)$.
\end{proof}

\begin{remark}[Inclusions between Calkin algebras]
\label{rem:calkin-corner-inclusions}
Let $H_1$ be a closed subspace of $H_2$, let
$j\colon H_1\hookrightarrow H_2$ be the inclusion isometry, and let
$p=jj^*$. Extension by zero gives an injective $*$-homomorphism
\[
   B(H_1)\longrightarrow B(H_2),
   \qquad T\longmapsto jTj^*.
\]
Moreover, $jTj^*$ is compact if and only if $T$ is compact: one direction
follows by multiplication of a compact operator by bounded operators,
and the other follows from $T=j^*(jTj^*)j$.
Writing $q_i\colon B(H_i)\to\mathcal Q(H_i)$, we therefore obtain
an injective $*$-homomorphism
\[
   \mathcal Q(H_1)\longrightarrow\mathcal Q(H_2),
   \qquad q_1(T)\longmapsto q_2(jTj^*).
\]
Its range is the corner
$q_2(p)\mathcal Q(H_2)q_2(p)$, since every operator in $pB(H_2)p$
has the form $jTj^*$. In particular, choosing a separably
infinite-dimensional closed subspace of $H$ also reduces the uncontrolled
propagation of $\mathcal Q(H)$ to the separable case.
\end{remark}

\begin{corollary}[No nonzero maps into algebras with controlled propagation]
\label{cor:bh-quotients-no-controlled-maps}
Let $H$ be infinite-dimensional, let $J\subsetneq B(H)$ be a
norm-closed two-sided ideal, and let $A$ be a $C^*$-algebra with
controlled propagation. Every $*$-homomorphism
\[
   B(H)/J\longrightarrow A
\]
is zero. In particular, this holds with domain $B(H)$ or $\mathcal Q(H)$.
\end{corollary}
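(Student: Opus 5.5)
The plan is to reduce to Proposition~\ref{prop:bh-in-nonzero-quotients} by looking at the kernel of the map. Let $\phi\colon B(H)/J\to A$ be a $*$-homomorphism, and let $q_J\colon B(H)\to B(H)/J$ be the quotient map. Form the composite $\psi=\phi\circ q_J\colon B(H)\to A$. Its kernel $K=\ker\psi$ is a norm-closed two-sided ideal of $B(H)$ containing $J$.

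If $K=B(H)$, then $\psi=0$. Since $q_J$ is surjective, this gives $\phi=0$, which is what we want. So it suffices to rule out $K\subsetneq B(H)$.

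Suppose, for contradiction, that $K\subsetneq B(H)$. Then $\psi$ factors through an injective $*$-homomorphism
\[
   \bar\psi\colon B(H)/K\longrightarrow A.
\]
Its image $\bar\psi(B(H)/K)$ is a $C^*$-subalgebra of $A$, because injective $*$-homomorphisms between $C^*$-algebras are isometric and so have closed range. Controlled propagation passes to $C^*$-subalgebras: restrict a faithful representation of $A$ with controlled propagation. Hence $B(H)/K$, being isomorphic to this subalgebra, has controlled propagation. But $B(H)/K$ is a nonzero quotient of $B(H)$, so Proposition~\ref{prop:bh-in-nonzero-quotients} says it has uncontrolled propagation, which is a contradiction.

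The same argument can be run more concretely. Compose the injective unital embedding $B(H)\hookrightarrow B(H)/K$ from Proposition~\ref{prop:bh-in-nonzero-quotients} with $\bar\psi$ and with a faithful representation $\rho\colon A\to B(\ell^2(I))$ satisfying $\rho(A)\subseteq S(I)$. This gives an injective $*$-homomorphism $B(H)\to B(\ell^2(I))$ with image in $S(I)$, which contradicts Proposition~\ref{prop:sparse-bh-obstruction}.

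There is no substantial obstacle. The only point needing care is that the homomorphism is neither assumed unital nor injective, and passing to the kernel $K$ handles both. The special cases follow by taking $J=0$ for $B(H)$ and $J=\mathcal K(H)$ for $\mathcal Q(H)$.
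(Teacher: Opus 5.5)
Your proof is correct and follows essentially the same route as the paper. The paper likewise observes that a nonzero homomorphism has image a nonzero quotient of $B(H)$, which has uncontrolled propagation by Proposition~\ref{prop:bh-in-nonzero-quotients} yet is a $C^*$-subalgebra of $A$. Your explicit passage to the kernel $K\supseteq J$ simply spells out the identification of that image with $B(H)/K$.
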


\begin{proof}
If such a homomorphism were nonzero, its image would be a nonzero quotient
of $B(H)$. By Proposition~\ref{prop:bh-in-nonzero-quotients}, that
image would have uncontrolled propagation. On the other hand, the image
is a $C^*$-subalgebra of $A$ and therefore has controlled propagation,
a contradiction.
\end{proof}

\begin{corollary}[Reduced and Borel groupoid algebras]
\label{cor:bh-calkin-no-groupoid-embedding}
Let $(G,\Sigma)$ be a twisted locally compact \'etale groupoid, not
necessarily Hausdorff, with locally compact Hausdorff unit space.
Let $H$ be any infinite-dimensional Hilbert space.
Every $*$-homomorphism from $B(H)$ or $\mathcal Q(H)$ into either
\[
   C_r^*(G,\Sigma)
   \qquad\text{or}\qquad
   B_r^*(G,\Sigma)
\]
is zero. In particular, neither $B(H)$ nor $\mathcal Q(H)$ embeds as a
$C^*$-subalgebra of either algebra, and neither is $*$-isomorphic to
either algebra. The same conclusions hold for every nonzero quotient
of $B(H)$.
\end{corollary}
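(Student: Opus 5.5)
This is a direct assembly of the earlier results. First, Theorem~\ref{thm:controlled-propagation-borel} gives that both $C_r^*(G,\Sigma)$ and $B_r^*(G,\Sigma)$ have controlled propagation. That theorem allows non-Hausdorff arrow spaces and only needs the unit space to be locally compact Hausdorff, which is exactly the hypothesis here.

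Let $J\subsetneq B(H)$ be a proper closed ideal, so that $B(H)/J$ is an arbitrary nonzero quotient of $B(H)$. Corollary~\ref{cor:bh-quotients-no-controlled-maps} applied with $A=C_r^*(G,\Sigma)$ or $A=B_r^*(G,\Sigma)$ then shows that every $*$-homomorphism $B(H)/J\to A$ vanishes. The cases $J=0$ and $J=\mathcal K(H)$ give the statements for $B(H)$ and $\mathcal Q(H)$. Here $\mathcal K(H)$ is proper because $H$ is infinite-dimensional, so the identity is not compact.

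For the non-embedding and non-isomorphism assertions, we argue by contradiction. An embedding or a $*$-isomorphism is a $*$-homomorphism, and it is nonzero because its domain $B(H)/J$ is nonzero. This contradicts the vanishing just established.

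The only points to check are, first, that a $*$-homomorphism need not be unital or injective, so the corollary applies as stated, and second, that the non-Hausdorff case is covered by Theorem~\ref{thm:controlled-propagation-borel}. Neither is a real obstacle; all the substantive work lies in Proposition~\ref{prop:sparse-bh-obstruction} and Proposition~\ref{prop:bh-in-nonzero-quotients}.
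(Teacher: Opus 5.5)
Your proposal is correct and follows the paper's proof, which likewise obtains controlled propagation of both targets from Theorem~\ref{thm:controlled-propagation-borel} and then applies Corollary~\ref{cor:bh-quotients-no-controlled-maps}. Your remarks on why $\mathcal K(H)$ is proper and how the non-embedding and non-isomorphism claims follow spell out steps the paper leaves implicit.
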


\begin{proof}
The two target algebras have controlled propagation by
Theorem~\ref{thm:controlled-propagation-borel}. Apply
Corollary~\ref{cor:bh-quotients-no-controlled-maps}.
\end{proof}

For the full twisted groupoid algebra, we use the canonical surjection
\[
   \lambda_{\mathrm{red}}\colon C^*(G,\Sigma)
                 \longrightarrow C_r^*(G,\Sigma).
\]
It is induced by the identity on $C_c(G,L)$: the universal norm dominates
the reduced norm, and the resulting $*$-homomorphism has closed range
containing the dense convolution algebra. This construction also applies
when $G$ is not Hausdorff.

\begin{corollary}[Full groupoid algebras]
\label{cor:bh-quotients-not-full-groupoid}
Let $(G,\Sigma)$ be a nonempty twisted locally compact \'etale groupoid,
not necessarily Hausdorff, with locally compact Hausdorff unit space.
Let $H$ be infinite-dimensional and let $J\subsetneq B(H)$ be a
norm-closed two-sided ideal. Then
\[
   B(H)/J\not\cong C^*(G,\Sigma).
\]
In particular, neither $B(H)$ nor $\mathcal Q(H)$ is $*$-isomorphic to
$C^*(G,\Sigma)$.

More precisely, every $*$-homomorphism
\[
   \varphi\colon B(H)/J\longrightarrow C^*(G,\Sigma)
\]
satisfies
\[
   \varphi(B(H)/J)\subseteq\ker\lambda_{\mathrm{red}}.
\]
Consequently, if $C^*(G,\Sigma)$ is unital, there is no unital
$*$-homomorphism from $B(H)/J$ into $C^*(G,\Sigma)$.
\end{corollary}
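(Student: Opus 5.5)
The plan is to compose with the regular quotient and apply Corollary~\ref{cor:bh-calkin-no-groupoid-embedding}. Given a $*$-homomorphism $\varphi\colon B(H)/J\to C^*(G,\Sigma)$, consider
\[
   \lambda_{\mathrm{red}}\circ\varphi\colon B(H)/J\longrightarrow C_r^*(G,\Sigma).
\]
The domain $B(H)/J$ is a nonzero quotient of $B(H)$. By Theorem~\ref{thm:controlled-propagation-borel}, $C_r^*(G,\Sigma)$ has controlled propagation, and this holds without Hausdorffness of the arrow space. Corollary~\ref{cor:bh-quotients-no-controlled-maps} then forces $\lambda_{\mathrm{red}}\circ\varphi=0$, that is, $\varphi(B(H)/J)\subseteq\ker\lambda_{\mathrm{red}}$.

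The non-isomorphism assertion follows once $\ker\lambda_{\mathrm{red}}$ is shown to be a proper ideal. For this it suffices that $C_r^*(G,\Sigma)\neq0$. Since $G$ is nonempty, $G^{(0)}$ is nonempty, so there is a nonzero $f\in C_c(G^{(0)})$, viewed in $C_c(G,L)$ through the canonical trivialization of $L$ over the unit space. The estimate $\|f\|_\infty\le\|f\|_r$ from Section~\ref{sec:standard-inputs} gives $\lambda_{\mathrm{red}}(f)\ne0$. Hence $\lambda_{\mathrm{red}}\ne0$, and the image of any $\varphi$ lies in a proper ideal. In particular $\varphi$ is not surjective, so no isomorphism $B(H)/J\cong C^*(G,\Sigma)$ exists. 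The cases of $B(H)$ and $\mathcal Q(H)$ are $J=0$ and $J=\mathcal K(H)$; both ideals are proper because $H$ is infinite-dimensional.

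For the unital statement, suppose $C^*(G,\Sigma)$ has unit $1$ and $\varphi$ is unital. Then $1\in\ker\lambda_{\mathrm{red}}$, so $\lambda_{\mathrm{red}}=0$, contradicting the previous paragraph. Note that $B(H)/J$ is unital because $J$ is proper.

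The only point needing care is the non-Hausdorff case. There we must check that the full algebra and the map $\lambda_{\mathrm{red}}$ are defined as in the remark preceding the statement. We must also check that $C_c(G^{(0)})$ sits inside $C_c(G,L)$: the unit space is open in an étale groupoid, locally compact, and Hausdorff, so this inclusion fits the stated definition of $C_c(G,L)$. Everything else is a direct combination of earlier results.
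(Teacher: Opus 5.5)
Your proposal is correct and follows essentially the same route as the paper. You compose with $\lambda_{\mathrm{red}}$ and kill the composition using controlled propagation of $C_r^*(G,\Sigma)$. You then use nonemptiness to get $C_r^*(G,\Sigma)\neq0$, so the kernel is a proper ideal, which yields both non-surjectivity and the unital statement. Your extra check that $C_c(G^{(0)})\subseteq C_c(G,L)$ in the non-Hausdorff case, via openness of $G^{(0)}$ and $\|f\|_\infty\le\|f\|_r$, simply spells out the paper's remark that $C_0(G^{(0)})\neq0$.
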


\begin{proof}
The composition $\lambda_{\mathrm{red}}\circ\varphi$ is zero by
Corollary~\ref{cor:bh-calkin-no-groupoid-embedding}. This proves the
range inclusion.

Since $G$ is nonempty, $C_0(G^{(0)})\ne0$ and hence
$C_r^*(G,\Sigma)\ne0$. Thus $\ker\lambda_{\mathrm{red}}$ is a
proper ideal, so $\varphi$ cannot be surjective. If the full algebra is
unital, $\lambda_{\mathrm{red}}$ is unital as well, and a unital
$\varphi$ would give a nonzero unital composition, a contradiction.
\end{proof}

\begin{remark}
The reduced and Borel conclusions exclude embeddings even when they are
not unital. For the full algebra, the argument above shows that any
embedding would have to lie in $\ker\lambda_{\mathrm{red}}$; it does
not exclude such nonunital embeddings. The propagation arguments in this section use the \'etale
hypothesis. The non-isomorphism conclusions for full and reduced
algebras also extend to locally compact Hausdorff groupoids with
continuous Haar systems of full support, by
Theorem~\ref{thm:unitality-haar} and
Remark~\ref{rem:haar-counting-normalization}.
\end{remark}

\subsection{Nonfinite von Neumann algebras}

\begin{theorem}[Nonfinite von Neumann algebras have uncontrolled propagation]
\label{thm:nonfinite-uncontrolled}
Every nonfinite von Neumann algebra $M$ contains a possibly nonunital
copy of $B(\ell^2(\mathbb N))$. Consequently, $M$ has uncontrolled
propagation.
\end{theorem}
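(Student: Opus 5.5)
The plan is to use the standard characterization of finiteness: $M$ is finite exactly when every isometry in $M$ is unitary, i.e.\ the identity is a finite projection. So a nonfinite $M$ has a projection $p\in M$, namely $p=1$, that is infinite: there is a partial isometry $v\in M$ with $v^*v=p$ and $vv^*=q<p$. The goal is to build from this a sequence of mutually orthogonal, mutually equivalent projections $(p_n)_{n\ge1}$ in $M$. Once we have those, the partial isometries implementing the equivalences give a system of matrix units, and we then need to upgrade the resulting copy of $\mathcal K(\ell^2(\mathbb N))$ to a copy of $B(\ell^2(\mathbb N))$ using the weak$^*$ closedness of $M$.

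\emph{Step 1: orthogonal equivalent projections.} Put $e=p-q\ne0$ and $p_n=v^{n-1}e(v^*)^{n-1}$. The standard computation for isometries of $pMp$ shows that the $p_n$ are mutually orthogonal: indeed $v^*v=p$ makes $v$ an isometry on $pH$, and $e$ is orthogonal to the range $q$ of $v$. Moreover $w_n=v^{n-1}e$ is a partial isometry with $w_n^*w_n=e$ and $w_nw_n^*=p_n$. Define
\[
e_{ij}=w_iw_j^*.
\]
These satisfy the matrix unit relations $e_{ij}e_{kl}=\delta_{jk}e_{il}$ and $e_{ij}^*=e_{ji}$.

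\emph{Step 2: the embedding.} Let $P=\sum_n p_n$, with the sum taken in the strong operator topology; $P$ lies in $M$ because $M$ is strongly closed. Represent $M\subseteq B(K)$ and set $K_0=eK$. Define the unitary
\[
U\colon\ell^2(\mathbb N)\otimes K_0\longrightarrow PK,\qquad U(\delta_n\otimes\xi)=w_n\xi,
\]
which is unitary by the matrix unit relations. Then define
\[
\Psi(T)=U\,(T\otimes 1_{K_0})\,U^*\qquad(T\in B(\ell^2(\mathbb N))).
\]
This is an injective $*$-homomorphism, nonunital in general, with $\Psi(E_{ij})=e_{ij}$, where $E_{ij}$ are the matrix units of $B(\ell^2(\mathbb N))$. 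It is normal, so $\Psi(T)$ is the strong limit of $\Psi(Q_NTQ_N)$, where $Q_N$ is the projection onto the first $N$ basis vectors. Each $\Psi(Q_NTQ_N)$ is a finite linear combination of the $e_{ij}$ and therefore lies in $M$. Since $M$ is strongly closed, $\Psi(T)\in M$.

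\emph{Step 3: conclusion.} By Corollary~\ref{cor:bh-uncontrolled}, $\Psi(B(\ell^2(\mathbb N)))\cong B(\ell^2(\mathbb N))$ has uncontrolled propagation. Since controlled propagation passes to $C^*$-subalgebras, $M$ has uncontrolled propagation.

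The main point needing care is Step 2: we must ensure that the image lies in $M$ and not merely in its norm closure. A norm argument only reaches compact-type elements, so strong closure is essential here; this is exactly where the von Neumann hypothesis enters. Step 1 is routine once $v$ is produced from the infiniteness of $1$. If orthogonality of the $p_n$ turns out to be awkward, I would first pass to the corner $pMp$ and argue there.
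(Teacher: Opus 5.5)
Your proposal is correct and follows essentially the same route as the paper. Both build orthogonal projections $v^{n}e(v^*)^{n}$ from a partial isometry whose initial projection strictly dominates its final projection, embed $B(\ell^2(\mathbb N))$ via $U(T\otimes 1)U^*$, show the image lies in $M$ as strong limits of finite matrix-unit combinations, and conclude with Corollary~\ref{cor:bh-uncontrolled}. The only cosmetic difference is that you start from a non-unitary isometry ($v^*v=1$), whereas the paper starts from an arbitrary infinite projection $e\sim f<e$; these are equivalent forms of nonfiniteness.
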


\begin{proof}
Represent $M$ faithfully and normally on a Hilbert space $\mathcal H$.
Since $M$ is not finite, there are projections $f<e$ in $M$ with
$e\sim f$. Choose a partial isometry $v\in M$ such that
\[
   v^*v=e,\qquad vv^*=f,
\]
and put $p_0=e-f\ne0$. Then $v\in eMe$ and $v^*p_0=0$.
The projections
\[
   p_n=v^np_0(v^*)^n\qquad(n\ge0)
\]
are nonzero and pairwise orthogonal. Indeed, $v$ is an isometry on
$e\mathcal H$, and for $\xi,\eta\in\mathcal H_0:=p_0\mathcal H$,
\[
   \langle v^n\xi,v^m\eta\rangle
   =\delta_{nm}\langle\xi,\eta\rangle.
\]
Here $v^0p_0(v^*)^0$ means $p_0$.

Using the standard basis $(\delta_n)_{n\ge0}$ of
$\ell^2(\mathbb N_0)$, define
\[
   U\colon\ell^2(\mathbb N_0)\otimes\mathcal H_0
                  \longrightarrow\mathcal H,
   \qquad U(\delta_n\otimes\xi)=v^n\xi.
\]
This is an isometry onto the closed orthogonal sum
$\bigoplus_{n\ge0}p_n\mathcal H$. Consequently,
\[
   \Phi(T)=U(T\otimes I_{\mathcal H_0})U^*
\]
defines an injective $*$-homomorphism
$B(\ell^2(\mathbb N_0))\to B(\mathcal H)$.

To see that its range lies in $M$, put
\[
   w_{ij}=v^ip_0(v^*)^j\in M\qquad(i,j\ge0).
\]
These are matrix units. If $P_N$ is the projection onto
$\operatorname{span}\{\delta_0,\ldots,\delta_{N-1}\}$ and
$T_{ij}$ are the matrix entries of $T$, then
\[
   \Phi(P_NTP_N)=\sum_{i,j=0}^{N-1}T_{ij}w_{ij}\in M.
\]
These operators are uniformly bounded by $\|T\|$ and converge strongly
to $\Phi(T)$. Strong closedness of $M$ therefore gives $\Phi(T)\in M$.
The unit of this copy is $\sum_{n\ge0}^{\mathrm{sot}}p_n$, which may
be smaller than $1_M$.

Corollary~\ref{cor:bh-uncontrolled} and the fact that controlled
propagation passes to $C^*$-subalgebras now imply that $M$ has
uncontrolled propagation.
\end{proof}

\begin{corollary}[Finiteness of groupoid von Neumann algebras]
\label{cor:no-properly-infinite-summand}
Every von Neumann algebra with controlled propagation is finite.
In particular, let $(G,\Sigma)$ be a twisted locally compact \'etale
groupoid, not necessarily Hausdorff, with locally compact Hausdorff
unit space. If either $C_r^*(G,\Sigma)$ or $B_r^*(G,\Sigma)$ is
$*$-isomorphic to a von Neumann algebra, that von Neumann algebra is
finite. In particular, it has no nonzero properly infinite central summand.
\end{corollary}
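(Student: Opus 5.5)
The plan is a contrapositive argument from Theorem~\ref{thm:nonfinite-uncontrolled}, with the groupoid statements following from Theorem~\ref{thm:controlled-propagation-borel}.

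First, let $M$ be a von Neumann algebra with controlled propagation. If $M$ were not finite, Theorem~\ref{thm:nonfinite-uncontrolled} would supply a possibly nonunital $C^*$-subalgebra of $M$ isomorphic to $B(\ell^2(\mathbb N))$. Controlled propagation passes to $C^*$-subalgebras by restricting a faithful representation whose range lies in $S(I)$. This copy of $B(\ell^2(\mathbb N))$ would therefore have controlled propagation, contradicting Corollary~\ref{cor:bh-uncontrolled}. Hence $M$ is finite.

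Second, for the groupoid statement, suppose $\varphi\colon M\to C_r^*(G,\Sigma)$ or $\varphi\colon M\to B_r^*(G,\Sigma)$ is a $*$-isomorphism. Theorem~\ref{thm:controlled-propagation-borel} gives faithful representations $\Lambda$ of both algebras with range in $S(I_G)$. Then $\Lambda\circ\varphi$ is a faithful representation of $M$ with controlled propagation, since the property is invariant under $C^*$-isomorphism. The first part then shows that $M$ is finite. This part needs no Hausdorffness of the arrow space, because Theorem~\ref{thm:controlled-propagation-borel} already allows non-Hausdorff $G$.

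Third, suppose $z$ is a nonzero central projection with $zM$ properly infinite. Then $z$ is an infinite projection in $M$, since $z\sim f<z$ for some $f$. Projections in a finite algebra are finite, so this is impossible. Alternatively, $zM$ is a $C^*$-subalgebra of $M$ and is itself a nonfinite von Neumann algebra, so Theorem~\ref{thm:nonfinite-uncontrolled} applies to it directly.

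I expect no real obstacle. The only point needing care is that the notion of finiteness used in Theorem~\ref{thm:nonfinite-uncontrolled} is the standard one: the unit is not equivalent to a proper subprojection, equivalently no projection $f<e$ with $e\sim f$ exists. One should also note that isomorphism as $C^*$-algebras suffices, because finiteness of a von Neumann algebra is determined by its projections and partial isometries, which are purely algebraic data.
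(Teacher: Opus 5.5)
Your proposal is correct and follows the paper's own proof: the first assertion is the contrapositive of Theorem~\ref{thm:nonfinite-uncontrolled}, and the groupoid statements follow from Theorem~\ref{thm:controlled-propagation-borel} together with the invariance of controlled propagation under $C^*$-isomorphism. Your justification of the final clause is also sound and spells out a step the paper leaves implicit: a nonzero central $z$ with $zM$ properly infinite would be an infinite projection, which cannot exist in a finite algebra.
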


\begin{proof}
The first assertion is the contrapositive of
Theorem~\ref{thm:nonfinite-uncontrolled}. The remaining assertions follow
from Theorem~\ref{thm:controlled-propagation-borel} and the invariance of
controlled propagation under $C^*$-isomorphism.
\end{proof}

\section{Finite-dimensional representations and bounded type I degree}
\label{sec:finite-dimensional-degree}

Throughout this section, $G$ is a locally compact Hausdorff \'etale
groupoid, $\Sigma$ is a twist over $G$, and $L\to G$ is the associated
Fell line bundle. We assume that
\[
   M=\Cr(G,\Sigma)
\]
is a von Neumann algebra, with its given $C^*$-norm and multiplication.
By Corollary~\ref{cor:no-properly-infinite-summand}, $M$ is finite.

If $M$ is of type I, its homogeneous decomposition therefore has the form
\[
   M\cong\prod_{n\ge1}\bigl(Z_n\vN M_n(\mathbb C)\bigr),
\]
where the $Z_n$ are abelian von Neumann algebras, some of which may be
zero. Our task is to show that only finitely many of the $Z_n$ are
nonzero. Finiteness alone does not give this conclusion: the von Neumann
algebra $\prod_{n\ge1}M_n(\mathbb C)$ is finite and has unbounded
homogeneous degrees.

The additional information comes from the groupoid. We shall show that a
finite-dimensional irreducible representation has dimension
\[
   \dim\pi=|O|\dim\sigma,
\]
where $O$ is a finite orbit and $\sigma$ is a projective representation
of a finite isotropy group. We then bound the two factors separately.
The preliminary lemmas do not require $M$ to be of type I; the quotient
lemma will also be used in the next section.

\subsection{A rigidity lemma for Banach quotients of von Neumann algebras}

A Banach space $X$ is \emph{Grothendieck} if every weak-*
convergent sequence in $X^*$ is weakly convergent.
Every von Neumann algebra has this property by
\cite[Corollary~7]{Pfitzner1994}. For a shorter proof of
Pfitzner's underlying weak-compactness theorem, see
\cite{FernandezPoloPeralta2010}.

Let $\Gamma$ be a discrete group and let $\omega$ be a normalized
scalar two-cocycle. Our convention for the twisted regular unitaries is
\[
   \lambda_\omega(g)\delta_h
   =\omega(g,h)\delta_{gh}.
\]
Their closed linear span is denoted by $\Cr(\Gamma,\omega)$.

\begin{lemma}[Von Neumann quotient rigidity]
\label{lem:vn-quotient-rigidity}
Let $N$ be a von Neumann algebra, let $k\ge1$, and let
$q\in M_k(\Cr(\Gamma,\omega))$ be a projection. If there is a
bounded surjective linear map
\[
   Q\colon N\longrightarrow qM_k(\Cr(\Gamma,\omega))q,
\]
then its target is finite-dimensional. The map $Q$ need not be
multiplicative or normal.
\end{lemma}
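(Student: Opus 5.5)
The plan is to play the Grothendieck property, which passes to Banach quotients, against a separability/weak-sequential feature of the target. Put $B=qM_k(\Cr(\Gamma,\omega))q$.

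\emph{Step 1: $B$ is Grothendieck.} The property passes to quotients. If $Q\colon N\to B$ is bounded and surjective, the open mapping theorem shows that $Q^*\colon B^*\to N^*$ is an isomorphism onto its (weak-* closed) range, and $Q^*$ is weak-*-to-weak-* continuous. Let $(\phi_n)$ be weak-* convergent in $B^*$. Then $(Q^*\phi_n)$ is weak-* convergent in $N^*$, hence weakly convergent by Pfitzner's theorem \cite[Corollary~7]{Pfitzner1994}. Since $Q^*$ is an into-isomorphism, it is a weak homeomorphism onto its range, so $(\phi_n)$ is weakly convergent. Thus $B$ is Grothendieck, and this uses neither multiplicativity nor normality of $Q$.

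\emph{Step 2: reduce to a contradiction with infinite dimension.} Suppose $B$ is infinite-dimensional. I want a weak-* null sequence $(\phi_n)$ in $B^*$ that is not weakly null. The natural tool is the canonical faithful trace $\tau$ on $\Cr(\Gamma,\omega)$, which gives a faithful tracing state $\tau_k$ on $M_k(\Cr(\Gamma,\omega))$, with GNS space $\ell^2(\Gamma)^{k}$. Restrict to $B$ via $b\mapsto\tau_k(\cdot\,b)$. Because $B$ is infinite-dimensional, its image in the GNS space $q(\ell^2(\Gamma)^k)$-type Hilbert space $\overline{B\Omega}$ is infinite-dimensional. Choose an orthonormal sequence $\xi_n=b_n\Omega$ with $b_n\in B$. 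The functionals $\phi_n(b)=\langle b\Omega,\xi_n\rangle/\|b_n\|$ need care, so a cleaner route is the following.

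The separable-Grothendieck obstruction: a Grothendieck space with separable dual-unit-ball behaviour, or more directly any Grothendieck space admitting a bounded operator onto $c_0$ or onto an infinite-dimensional separable Hilbert-like space in the wrong way, leads to a contradiction. Concretely, I will use the known fact that a separable Grothendieck space is reflexive, since the weak-* topology on the dual ball is metrizable and Grothendieck then forces weak-* compact sets to be weakly sequentially compact. Combined with the fact that an infinite-dimensional $C^*$-algebra is never reflexive, this finishes the argument. $B$ is a hereditary corner, hence a $C^*$-algebra, and an infinite-dimensional one contains an infinite sequence of orthogonal positive norm-one elements spanning $c_0$.

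\emph{Step 3: separability issue (main obstacle).} $\Gamma$ may be uncountable, so $B$ need not be separable. To handle this, I would pass to a countable subgroup $\Gamma_0$ containing the (countable) group supports of approximants of $q$'s entries, and of countably many chosen elements of $B$ witnessing infinite dimension. The conditional expectation $E_{\Gamma_0}\colon \Cr(\Gamma,\omega)\to\Cr(\Gamma_0,\omega)$, obtained by restricting Fourier coefficients, is a contractive projection. After approximating $q$ by a projection $q_0$ in $M_k(\Cr(\Gamma_0,\omega))$ that is unitarily close to it, $q_0M_k(\Cr(\Gamma_0,\omega))q_0$ becomes a complemented subspace of a space isomorphic to $B$, and it is still infinite-dimensional by the choice of $\Gamma_0$. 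Complemented subspaces of Grothendieck spaces are Grothendieck, since the projection is a quotient map. This gives a separable infinite-dimensional Grothendieck $C^*$-algebra, which Step 2 rules out. Thus $B$ is finite-dimensional. The delicate point is choosing $\Gamma_0$ so that $q$ can be perturbed into the smaller algebra while keeping the corner infinite-dimensional; I would enlarge $\Gamma_0$ countably many times in an increasing union argument.
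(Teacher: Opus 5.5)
Your architecture is the paper's. Step~1 is the paper's argument that the Grothendieck property passes to bounded linear quotients. The route you settle on in Step~2 is also exactly the paper's: a separable Grothendieck space is reflexive, by metrizability of the dual ball and Eberlein--\v{S}mulian, while an infinite-dimensional $C^*$-algebra contains $c_0$. You should delete the exploratory opening of Step~2. In particular, a surjection onto a separable Hilbert space is no obstruction at all, since Hilbert spaces are themselves Grothendieck.

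The gap is in Step~3. Once you replace $q$ by a nearby projection $q_0$, the claim that $q_0M_k(\Cr(\Gamma_0,\omega))q_0$ is ``still infinite-dimensional by the choice of $\Gamma_0$'' is unsupported. Your witnesses satisfy $b_j=qb_jq$, not $b_j=q_0b_jq_0$. The unitary $u$ with $uqu^*=q_0$ need not lie in $M_k(\Cr(\Gamma_0,\omega))$, so the elements $E_{\Gamma_0}^{(k)}(ub_ju^*)$ could a priori become linearly dependent. Rescuing this would need an extra input, for instance that every nonzero corner of $M_k(\Cr(\Gamma_0,\omega))$ is infinite-dimensional when $\Gamma_0$ is infinite, and you do not supply one.

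The missing observation is that the ``delicate point'' does not exist. $\Cr(\Gamma_0,\omega|_{\Gamma_0})$ embeds isometrically in $\Cr(\Gamma,\omega)$ (decompose $\ell^2(\Gamma)$ over the cosets $\Gamma_0t$), so it is norm closed there. Take $\Gamma_0$ generated by the supports of whole sequences of finitely supported approximants converging to the entries of $q,b_1,b_2,\ldots$. Then $q$ and every $b_j$ lie exactly in $M_k(\Cr(\Gamma_0,\omega))$. Bimodularity of $E_{\Gamma_0}^{(k)}$ gives $E_{\Gamma_0}^{(k)}(B)=qM_k(\Cr(\Gamma_0,\omega))q$. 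This is a separable algebra containing the linearly independent $b_j$, and $E_{\Gamma_0}^{(k)}\circ Q$ is directly a bounded surjection from $N$ onto it. No perturbation, unitary conjugation, or complemented-subspace argument is needed. This is how the paper finishes, and your increasing-union idea lands there once you note that the subgroup algebra is closed.
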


\begin{proof}
We first recall why a separable $C^*$-algebra which is a Banach quotient
of a von Neumann algebra must be finite-dimensional.

The Grothendieck property passes to bounded linear quotients. Indeed, if
$T\colon X\to Y$ is a bounded surjection, $T^*$ identifies $Y^*$
with a closed subspace of $X^*$. Weak-* convergence in $Y^*$ gives
weak-* convergence of the images in $X^*$. If $X$ is Grothendieck,
the latter convergence is weak. By Hahn--Banach, the weak topology of
a closed subspace is the topology inherited from the ambient space,
so the original sequence converges weakly in $Y^*$.

If $Y$ is also separable, its dual unit ball is weak-* compact and
metrizable. Every sequence in that ball therefore has a weak-* convergent
subsequence, which is weakly convergent when $Y$ is Grothendieck.
By the Eberlein--\v{S}mulian theorem, the dual unit ball is weakly compact.
Thus $Y^*$, and hence $Y$, is reflexive. Finally, a $C^*$-algebra is
reflexive only if it is finite-dimensional. To recall the obstruction,
every infinite-dimensional $C^*$-algebra contains an infinite-dimensional
abelian $C^*$-subalgebra; choosing norm-one positive functions with
pairwise disjoint supports in its spectrum gives a closed linear
subspace isometric to $c_0$, which is not reflexive.

Put $B=qM_k(\Cr(\Gamma,\omega))q$ and suppose that $B$ is
infinite-dimensional. Choose a linearly independent sequence
$(b_j)_{j\ge1}$ in $B$. Approximate every matrix entry of
$q,b_1,b_2,\ldots$ by sequences of finite linear combinations of the
twisted regular unitaries. The group elements used in these approximations
generate a countable subgroup $\Lambda\le\Gamma$, and
\[
   q,b_1,b_2,\ldots
   \in M_k\bigl(\Cr(\Lambda,\omega|_\Lambda)\bigr).
\]

Here the subgroup algebra is canonically a $C^*$-subalgebra of the
ambient reduced algebra. To see this, decompose $\ell^2(\Gamma)$ over
the cosets $\Lambda t$. For each coset, the unitary
\[
   \ell^2(\Lambda)\longrightarrow\ell^2(\Lambda t),
   \qquad \delta_h\longmapsto\omega(h,t)\delta_{ht},
\]
intertwines the twisted regular actions of $\Lambda$. Thus the algebraic
inclusion preserves the reduced norm.

Compression to $\ell^2(\Lambda)$ gives the conditional expectation
\[
   E_\Lambda\colon\Cr(\Gamma,\omega)\longrightarrow
       \Cr(\Lambda,\omega|_\Lambda),
   \qquad
   E_\Lambda(\lambda_\omega(g))=
   \begin{cases}
      \lambda_{\omega|_\Lambda}(g),&g\in\Lambda,\\
      0,&g\notin\Lambda.
   \end{cases}
\]
Indeed, this formula defines a completely positive contraction, fixes
the subgroup algebra, and is bimodular over it.

Apply this expectation entry by entry. Since $q$ belongs to the subgroup
matrix algebra, bimodularity shows that $E_\Lambda^{(k)}$ maps $B$
onto
\[
   B_\Lambda
   :=qM_k\bigl(\Cr(\Lambda,\omega|_\Lambda)\bigr)q.
\]
The algebra $B_\Lambda$ is separable because $\Lambda$ is countable,
and it is infinite-dimensional because it contains every $b_j$.
On the other hand,
\[
   E_\Lambda^{(k)}\circ Q\colon N\longrightarrow B_\Lambda
\]
is a bounded surjection. The preceding Banach-space argument forces
$B_\Lambda$ to be finite-dimensional, a contradiction.
\end{proof}

The use of Banach quotients is important here: the maps arising from
$M$ below need not be normal, and their kernels need not be weakly closed.

\subsection{Finite orbits and reduced factorization}

We first record how to prescribe operators in finite homogeneous
summands. We use the structure theorem for homogeneous von Neumann
algebras; see \cite[Chapter~V]{TakesakiI}.

\begin{lemma}[Constant-matrix lifts in homogeneous summands]
\label{lem:sec4-constant-matrix-lift}
Let $N$ be a homogeneous type $I_n$ von Neumann algebra, where
$n\ge1$, and let $\rho\colon N\to B(H)$ be irreducible.
Then $\dim H=n$, and every $V\in B(H)$ has a lift $x\in N$ with
\[
   \rho(x)=V,
   \qquad \|x\|=\|V\|.
\]
If $V$ is unitary, the lift may be chosen unitary.
\end{lemma}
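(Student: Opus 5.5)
We use the structure theorem for homogeneous type $I_n$ von Neumann algebras (\cite[Chapter~V]{TakesakiI}) to write
\[
   N\cong Z\vN M_n(\C)\cong M_n(Z),
\]
where $Z$ is the center of $N$ and is abelian. Every irreducible representation $\rho$ of $N$ restricts on the center to scalars, since $\rho(Z)$ commutes with the irreducible set $\rho(N)$. Thus $\rho|_Z=\chi\cdot 1$ for a character $\chi$ of $Z$, which is nonzero because $\rho\neq0$ and $Z$ contains the unit of $N$. It follows that $\rho$ factors through the quotient
\[
   M_n(Z)\longrightarrow M_n(Z/\ker\chi)=M_n(\C),
   \qquad x\longmapsto\chi^{(n)}(x),
\]
the entrywise evaluation. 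Concretely, the matrix units $e_{ij}\in M_n(\C)\subseteq N$ satisfy $\rho(x)=\rho\bigl(\sum_{i,j}x_{ij}e_{ij}\bigr)=\sum_{i,j}\chi(x_{ij})\rho(e_{ij})$. Hence $\rho(N)$ is spanned by the $\rho(e_{ij})$.

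Because $\rho$ is a $*$-homomorphism and $\sum_i e_{ii}=1$, the $\rho(e_{ij})$ form a system of matrix units in $B(H)$ summing to $\rho(1)$. The representation is nonzero, so $\rho(e_{11})\ne0$, and $\rho$ is injective on the simple algebra $M_n(\C)$. Irreducibility together with the description of $\rho(N)$ as the span of these matrix units forces $\rho(1)=1_H$. It also forces the range of $\rho(e_{11})$ to be one-dimensional, since any nonzero subspace of it generates an invariant subspace. Hence $\dim H=n$, and $\rho|_{M_n(\C)}\colon M_n(\C)\to B(H)$ is a unital $*$-isomorphism.

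For the lift, set $x=(\rho|_{M_n(\C)})^{-1}(V)$, viewed as an element of the constant matrices $1_Z\otimes M_n(\C)\subseteq N$. Then $\rho(x)=V$. Moreover, $\|x\|=\|V\|$, because injective $*$-homomorphisms are isometric and the inclusion $M_n(\C)\hookrightarrow N$ is a unital injective $*$-homomorphism. If $V$ is unitary, then $x$ is the preimage of a unitary under a unital $*$-isomorphism, so $x$ is unitary in $M_n(\C)$ and hence in $N$.

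The main point needing care is that $\rho$ is not assumed normal, so it cannot be handled by normal-representation theory. The factorization through a character of $Z$ avoids normality entirely: it uses only that the center maps to scalars and the finite algebraic decomposition $x=\sum x_{ij}e_{ij}$. This decomposition holds in $M_n(Z)$ without any limits.
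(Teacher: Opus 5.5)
Your proof is correct and follows essentially the same route as the paper: the center acts by a character $\chi$, the finite algebraic decomposition $x=\sum_{i,j}x_{ij}e_{ij}$ gives $\rho(N)=\rho(1\otimes M_n(\C))$, and the lift is the constant matrix $(\rho|_{M_n(\C)})^{-1}(V)$. You additionally spell out the matrix-unit argument for $\dim H=n$ and the norm equality via isometry of injective $*$-homomorphisms, which the paper packages into a unitary $U\colon\C^n\to H$ with $\rho(z\otimes a)=\chi(z)UaU^*$.
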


\begin{proof}
Identify $N$ with $Z(N)\vN M_n(\mathbb C)$. Its center acts under
$\rho$ through a character $\chi$. Every element of $N$ is a finite
sum $\sum_{p,q}z_{pq}\otimes e_{pq}$. Consequently, the image of $N$
equals the image of its constant matrix algebra, so the restriction of
$\rho$ to that matrix algebra is irreducible.

Thus $\dim H=n$, and there is a unitary $U\colon\mathbb C^n\to H$
such that
\[
   \rho(z\otimes a)=\chi(z)UaU^*.
\]
The constant matrix $x=1\otimes U^*VU$ has the required image and norm.
It is unitary when $V$ is unitary.
\end{proof}

We also use the strong summability of elements supported on orthogonal
central projections. If $(z_i)_{i\in I}$ are pairwise orthogonal central
projections in $M$ and $x_i\in z_iM$ with
$C:=\sup_i\|x_i\|<\infty$, then
\[
   x=\sum_{i\in I}^{\mathrm{sot}}x_i\in M,
   \qquad z_ix=x_i,
   \qquad \|x\|=\sup_i\|x_i\|.
\]
Here the sum is the net of finite partial sums. Indeed, in a faithful
normal representation, for each finite $F\subseteq I$,
\[
   \left\|\sum_{i\in F}x_i\xi\right\|^2
   =\sum_{i\in F}\|x_iz_i\xi\|^2
   \le C^2\sum_{i\in F}\|z_i\xi\|^2.
\]
Strong summability of the orthogonal projections makes the tails tend
to zero; see \cite[Theorem~4.1.3]{Murphy}. Strong closedness gives the
limit in $M$, and multiplication by $z_i$ gives $z_ix=x_i$ and the
norm equality.

Combining these observations gives a construction used in both obstruction
arguments. Suppose that each $z_iM$ is homogeneous of finite degree and
that $\pi_i\colon M\to B(H_i)$ is irreducible with
$\pi_i(z_i)=I_{H_i}$. For any contractions $T_i\in B(H_i)$, we can
choose lifts $x_i\in z_iM$ and form their strong sum to obtain a
contraction $x\in M$ satisfying
\begin{equation}
   \pi_i(x)=\pi_i(z_ix)=\pi_i(x_i)=T_i
   \qquad(i\in I).
   \tag{\ref{lem:sec4-constant-matrix-lift}.1}
   \label{eq:sec4-prescribed-images}
\end{equation}
This uses only the algebraic identity $z_ix=x_i$; it does not require
the possibly singular representation $\pi_i$ to preserve a strong limit.

\begin{lemma}[Finite-orbit form]
\label{lem:finite-orbit-form}
Let $\pi\colon M\to B(H_\pi)$ be an irreducible finite-dimensional
representation. There are a finite orbit $O\subseteq G^{(0)}$, a point
$u\in O$, a finite isotropy group $\Gamma=G_u^u$, and an irreducible
projective representation $\sigma\colon\Gamma\to U(K)$ such that
$\pi$ factors through the reduced restriction map
\[
   R_O\colon M\longrightarrow\Cr(G|_O,\Sigma|_O)
\]
and is equivalent to the representation induced from $\sigma$. The
multiplier of $\sigma$ is determined, up to cohomology, by
$\Sigma|_\Gamma$. In particular,
\begin{equation}
   H_\pi\cong\ell^2(O)\otimes K,
   \qquad \dim H_\pi=|O|\dim K.
   \tag{\ref{lem:finite-orbit-form}.1}
   \label{eq:sec4-orbit-dimension}
\end{equation}
\end{lemma}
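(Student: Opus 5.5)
The plan is to locate the finite-dimensional irreducible representation $\pi$ on the diagonal $D=C_0(G^{(0)})$, use bisection sections to move between spectral subspaces, and then use the von Neumann structure of $M$, through Lemma~\ref{lem:vn-quotient-rigidity}, to force the isotropy to be finite.

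\emph{Step 1: the points seen by $\pi$ on the diagonal.} Since $\pi$ is irreducible and nonzero, it is nondegenerate. Because $D$ contains an approximate unit for $M$, the restriction $\pi|_D$ is nondegenerate too. It is a nondegenerate representation of a commutative algebra on a finite-dimensional space, so it is a finite direct sum of point evaluations. This gives a finite set $X\subseteq G^{(0)}$ and nonzero projections $P_v$, for $v\in X$, with $\sum_{v\in X}P_v=I$ and $\pi(h)=\sum_{v}h(v)P_v$ for $h\in D$.

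Now take $f\in C_c(G,L)$ supported in an open bisection $U$, and $h\in D$. Then $f*h=(h\circ s)f$ and $h*f=(h\circ r)f$. From this I will deduce
\[
   \pi(f)P_v=P_{r(\gamma)}\pi(f)P_v
\]
when $\gamma\in U$ has $s(\gamma)=v$, and $\pi(f)P_v=0$ when $v\notin s(U)$. I also need the reverse direction. If $\gamma\in U$ with $s(\gamma)=v\in X$ and $f(\gamma)\neq0$, then $f^**f\in D$ takes the value $\|f(\gamma)\|^2>0$ at $v$. Hence $\pi(f)P_v\ne0$, and this forces $r(\gamma)\in X$.

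\emph{Step 2: a single finite orbit.} Step 1 shows that $X$ is invariant, so $X$ is a union of orbits, and each orbit in $X$ is finite because $X$ is. Pick an orbit $O\subseteq X$. The subspace $\bigoplus_{v\in O}P_vH_\pi$ is invariant under $\pi(C_c(G,L))$, since every $f$ is a finite sum of bisection-supported pieces. By irreducibility this subspace is all of $H_\pi$, so $X=O$.

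As a finite invariant set, $O$ is closed. Let $I_O$ be the closed ideal generated by $C_0(G^{(0)}\setminus O)$; then $\pi$ vanishes on $I_O$. Write $G|_O$ for the reduction. It is the transitive groupoid $O\times\Gamma\times O$ with isotropy group $\Gamma=G_u^u$, which is discrete. Hence
\[
   \Cr(G|_O,\Sigma|_O)\cong M_{|O|}\bigl(\Cr(\Gamma,\omega)\bigr),
\]
where the cocycle $\omega$ comes from trivializing $\Sigma|_\Gamma$.

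\emph{Step 3: finiteness of $\Gamma$ and factorization through $R_O$.} The restriction map $R_O\colon M\to\Cr(G|_O,\Sigma|_O)$ is a surjective $*$-homomorphism. Since $M$ is a von Neumann algebra, Lemma~\ref{lem:vn-quotient-rigidity}, applied with $k=|O|$ and $q=1$, shows that the target is finite-dimensional. Therefore $\Gamma$ is finite.

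It remains to show that $\ker\pi\supseteq\ker R_O$, and I expect this to be the main obstacle: a priori $\ker R_O$ may be larger than $I_O$ (exactness issues). Finiteness of $\Gamma$ resolves it, as follows. For a finite group, the full and reduced twisted algebras of $G|_O$ coincide. Consider the chain
\[
   C^*(G|_O,\Sigma|_O)\;\longrightarrow\; M/I_O\;\longrightarrow\;\Cr(G|_O,\Sigma|_O).
\]
The first map is a surjection, because $C^*(G|_O,\Sigma|_O)$ is the universal completion of $C_c$ of the reduction and $M/I_O$ is a $C^*$-completion of the same. The composite of the two maps is the regular map, which is injective because full and reduced agree. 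Hence $M/I_O\cong\Cr(G|_O,\Sigma|_O)$, and $\pi$ factors through $R_O$.

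\emph{Step 4: identifying $\pi$.} The irreducible representations of
\[
   M_{|O|}\bigl(C^*(\Gamma,\omega)\bigr)
\]
are precisely $\mathrm{id}_{|O|}\otimes\sigma$ for $\sigma$ an irreducible $\omega$-representation of $\Gamma$. In groupoid terms, these are the representations induced from $\sigma$ at the point $u$. This gives $H_\pi\cong\ell^2(O)\otimes K$ and the dimension formula \eqref{eq:sec4-orbit-dimension}.

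Finally, a different choice of local trivialization of $\Sigma|_\Gamma$ changes $\omega$ by a coboundary. So the multiplier of $\sigma$ is determined by $\Sigma|_\Gamma$ up to cohomology.
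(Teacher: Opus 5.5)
Your proposal is correct and follows the paper's proof essentially step for step: decomposing $\pi|_D$ into point evaluations, using invariance to force a single finite orbit, applying quotient rigidity to $M_{|O|}(\Cr(\Gamma,\omega))$ to make $\Gamma$ finite, and finishing by matrix amplification. The only variation is the factorization. You prove $\ker R_O=I_O$ through the full algebra of the finite reduction, while the paper directly shows $\pi(f)=0$ whenever $f|_{G|_O}=0$ (splitting $f=\sum_j\psi_jf$ into bisection-supported pieces and using $\pi(f_j)^*\pi(f_j)=\pi(f_j^**f_j)=0$) and then defines $\tau(b)=\pi(\widetilde b)$ on the finite-dimensional algebra.

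Be aware that your first arrow $C^*(G|_O,\Sigma|_O)\to M/I_O$ silently uses the analogous fact that every $f\in\Cc(G,L)$ vanishing on $G|_O$ lies in $I_O$. This is true: approximate each bisection-supported piece $f_j$ uniformly by $f_j*d$ with $d\in C_c(G^{(0)}\setminus O)$, and use $\|\cdot\|_r\le\|\cdot\|_\infty$ for sections supported in one bisection. But this is exactly where the factorization is decided, so you should state and justify it.
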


\begin{proof}
Put $D=C_0(G^{(0)})$.

\smallskip
\noindent\emph{Step 1: the diagonal determines a finite orbit.}
An approximate identity $(e_j)$ of $D$ is an approximate identity for
$M$. Indeed, for $f\in\Cc(G,L)$,
\[
   (e_j*f)(\gamma)=e_j(r(\gamma))f(\gamma),
   \qquad
   (f*e_j)(\gamma)=f(\gamma)e_j(s(\gamma)).
\]
These converge uniformly to $f$ on its compact support, and a finite
bisection decomposition gives convergence in reduced norm. Hence
$\pi|_D$ is nondegenerate. Simultaneous diagonalization gives a finite
set $F\subseteq G^{(0)}$ and nonzero orthogonal subspaces $H_x$ with
\begin{equation}
   H_\pi=\bigoplus_{x\in F}H_x,
   \qquad \pi(d)|_{H_x}=d(x)I_{H_x}\quad(d\in D).
   \tag{\ref{lem:finite-orbit-form}.2}
   \label{eq:sec4-diagonal-blocks}
\end{equation}

Let $\gamma\colon x\to y$ with $x\in F$. Choose a section
$f\in\Cc(U,L)$ on an open bisection $U\ni\gamma$, with
$c:=\|f(\gamma)\|>0$. For $\xi\in H_x$,
\[
   \|\pi(f)\xi\|^2
   =\langle\pi(f^**f)\xi,\xi\rangle
   =c^2\|\xi\|^2.
\]
For $d\in D$, the section $h=d*f-d(y)f$ is supported in $U$ and
vanishes at its unique arrow with source $x$. Thus $(h^**h)(x)=0$ and
\[
   \|(\pi(d)-d(y)I)\pi(f)\xi\|^2
   =\langle\pi(h^**h)\xi,\xi\rangle=0.
\]
Therefore $\pi(f)\xi$ is a joint eigenvector at $y$, and it is nonzero
when $\xi\ne0$. This forces $y\in F$ and
$\pi(f)H_x\subseteq H_y$. The same argument for $f^*$ gives the
reverse transport. In fact, if $T=\pi(f)|_{H_x}$, then
\[
   T^*T=c^2I_{H_x},
   \qquad TT^*=c^2I_{H_y},
\]
so $c^{-1}T$ is unitary.

It follows that $F$ is invariant and that the dimensions of the $H_x$
are constant on each orbit. Since bisection-supported sections span
$\Cc(G,L)$, the sum of the $H_x$ over any orbit in $F$ reduces
$\pi(M)$. Irreducibility forces $F$ to be a single orbit, denoted by $O$.

\smallskip
\noindent\emph{Step 2: algebraic restriction and reduced restriction.}
First note that
\begin{equation}
   f|_{G|_O}=0\quad\Longrightarrow\quad\pi(f)=0
   \qquad(f\in\Cc(G,L)).
   \tag{\ref{lem:finite-orbit-form}.3}
   \label{eq:sec4-algebraic-restriction-kernel}
\end{equation}
Indeed, choose a scalar partition of unity on $\supp(f)$ subordinate
to finitely many bisections and write $f=\sum_j f_j$, where
$f_j=\psi_j f$. Every $f_j$ still vanishes on $G|_O$. Since $O$ is
invariant, $(f_j^**f_j)(x)=0$ for every $x\in O$. The diagonal
decomposition therefore gives
$\pi(f_j)^*\pi(f_j)=\pi(f_j^**f_j)=0$, so $\pi(f_j)=0$.

We next construct restriction on the reduced completions. The finite
orbit $O$ is closed and invariant. For $u\in O$, its reduction has the
same source fibre as $G$:
\[
   (G|_O)_u=G_u.
\]
Identifying the corresponding regular representations gives
\[
   \|f|_{G|_O}\|_r
   =\sup_{u\in O}\|\lambda_u(f)\|
   \le\|f\|_r.
\]
Thus algebraic restriction, which respects convolution and involution,
extends to a contractive $*$-homomorphism
\[
   R_O\colon M\longrightarrow\Cr(G|_O,\Sigma|_O).
\]

This map is surjective. The reduction $G|_O$ is discrete: a bisection
through an arrow $\gamma$ can be shrunk so that its source meets $O$
only at $s(\gamma)$, and then it meets $G|_O$ only at $\gamma$.
Every compactly supported section on the reduction has finite support.
Extend each of its values using a local trivialization of $L$ and a
compactly supported cutoff on such a bisection, and sum these extensions.
Algebraic restriction is therefore onto. Its completed range is closed
and contains the dense convolution algebra, proving surjectivity of $R_O$.

\smallskip
\noindent\emph{Step 3: the transitive matrix decomposition.}
Write $m=|O|$, fix $u\in O$, and put $\Gamma=G_u^u$.
For each $x\in O$, choose an arrow $\gamma_x\colon u\to x$ and a
unit vector $v_x\in L_{\gamma_x}$, with $\gamma_u=u$ and $v_u=1_u$.
Also choose unit vectors $w_g\in L_g$ for $g\in\Gamma$, with
$w_e=1_u$, and define $\omega$ by
\[
   w_gw_h=\omega(g,h)w_{gh}.
\]
This is a normalized scalar two-cocycle representing the restricted twist.

Every arrow $\alpha\colon y\to x$ in $G|_O$ has a unique expression
$\alpha=\gamma_xg\gamma_y^{-1}$ with $g\in\Gamma$. For
$b\in\Cc(G|_O,L|_{G|_O})$, define scalar matrix entries by
\[
   b(\gamma_xg\gamma_y^{-1})
   =\Phi(b)_{xy}(g)\,v_xw_gv_y^*.
\]
The identities $v_y^*v_y=1_u$ and $w_aw_b=\omega(a,b)w_{ab}$ give
\[
   \Phi(b*c)_{xz}(g)
   =\sum_{y\in O}\ \sum_{g_1g_2=g}
       \Phi(b)_{xy}(g_1)\Phi(c)_{yz}(g_2)\omega(g_1,g_2).
\]
The involution likewise becomes the twisted matrix adjoint. Thus $\Phi$
is an algebraic $*$-isomorphism onto the matrix algebra over the twisted
group ring of $\Gamma$.

For completeness, this isomorphism preserves the reduced norm. For each
source point $y\in O$, define a unitary
\[
   W_y\colon\ell^2((G|_O)_y,L)
          \longrightarrow\ell^2(O)\otimes\ell^2(\Gamma)
\]
by sending the unit vector $v_xw_gv_y^*$ in the basis line
$L_{\gamma_xg\gamma_y^{-1}}$ to $\delta_x\otimes\delta_g$.
The same multiplication formula gives
\[
   W_y\lambda_y(b)W_y^*
   =\bigl[\lambda_\omega(\Phi(b)_{xz})\bigr]_{x,z\in O}.
\]
Taking the supremum over $y$ proves equality of the reduced norms.
Consequently, $\Phi$ extends to an isomorphism
\begin{equation}
   \overline\Phi\colon\Cr(G|_O,\Sigma|_O)
       \xrightarrow{\ \cong\ }M_m\bigl(\Cr(\Gamma,\omega)\bigr).
   \tag{\ref{lem:finite-orbit-form}.4}
   \label{eq:sec4-transitive-matrix-decomposition}
\end{equation}

\smallskip
\noindent\emph{Step 4: finiteness and factorization.}
The composition $\overline\Phi\circ R_O$ is a bounded surjection
from $M$ onto $M_m(\Cr(\Gamma,\omega))$.
Lemma~\ref{lem:vn-quotient-rigidity}, with $k=m$ and $q=1$, implies
that this matrix algebra is finite-dimensional. Hence so is
$\Cr(\Gamma,\omega)$. Since
\[
   \lambda_\omega(g)\delta_e=\delta_g,
\]
the twisted regular unitaries are linearly independent, and $\Gamma$
must be finite. The arrow parametrization above now shows that $G|_O$
is a finite groupoid.

For a section $b$ on this finite reduction, choose an extension
$\widetilde b\in\Cc(G,L)$ and set $\tau(b)=\pi(\widetilde b)$.
Equation~\eqref{eq:sec4-algebraic-restriction-kernel} makes this
well-defined, and algebraic restriction shows that $\tau$ is a
$*$-representation. Its domain is finite-dimensional and already equals
its reduced completion, so $\tau$ is bounded. Equality on $\Cc(G,L)$
and continuity now give
\[
   \pi=\tau\circ R_O.
\]
Surjectivity of $R_O$ implies that $\tau$ is irreducible.

An irreducible representation of $M_m(\Cr(\Gamma,\omega))$ is the
matrix amplification of an irreducible representation
$\rho\colon\Cr(\Gamma,\omega)\to B(K)$. This follows by restricting
to a diagonal matrix corner and using the matrix units to identify the
other corners. Put $\sigma(g)=\rho(\lambda_\omega(g))$. Then
$\sigma$ is an irreducible $\omega$-projective representation and,
under the resulting identification $H_\pi=\ell^2(O)\otimes K$,
\begin{equation}
   [\pi(f)]_{xy}
   =\sum_{g\in\Gamma}
      \Phi(f|_{G|_O})_{xy}(g)\,\sigma(g)
   \qquad(f\in\Cc(G,L)).
   \tag{\ref{lem:finite-orbit-form}.5}
   \label{eq:sec4-induced-block-formula}
\end{equation}
This is the matrix form of the representation induced from $\sigma$,
and it gives $\dim H_\pi=|O|\dim K$.
Changing the vectors $w_g$ changes $\omega$ by a coboundary, as required.
\end{proof}

\begin{remark}[Reduced factorization]
Equation~\eqref{eq:sec4-algebraic-restriction-kernel} concerns only the
convolution algebra. It does not, by itself, identify the completed
kernel of $R_O$. We first used quotient rigidity to prove that the
reduction is finite; only then did the algebraic representation become
automatically continuous. No exactness assumption on $G$ is used.
\end{remark}

\subsection{The orbit-growth obstruction}

Formula~\eqref{eq:sec4-induced-block-formula} has a useful consequence:
a bisection sends a fixed source block into at most one range block.
We compare this with an operator that spreads a vector from one source
block uniformly over the orbit. No bound on the dimensions of the blocks
is needed.

\begin{lemma}[Orbit-growth obstruction]
\label{lem:orbit-growth-obstruction}
Let $(z_i)_{i\ge1}$ be pairwise orthogonal nonzero central projections
in $M$, with each $z_iM$ homogeneous of finite degree. Let
$\pi_i\colon M\to B(H_i)$ be irreducible representations satisfying
$\pi_i(z_i)=I_{H_i}$, and write their finite-orbit realizations as
\[
   H_i=\ell^2(O_i)\otimes K_i.
\]
Then the orbit cardinalities $|O_i|$ are bounded.
\end{lemma}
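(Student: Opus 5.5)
Argue by contradiction and follow the pattern of Proposition~\ref{prop:sparse-bh-obstruction}, working directly with the representations $\pi_i$ and the block formula \eqref{eq:sec4-induced-block-formula} instead of a sparse basis. Suppose the $|O_i|$ are unbounded. After passing to a subsequence, assume $|O_i|\to\infty$. For each $i$ fix $u_i\in O_i$ and a unit vector $\kappa_i\in K_i$. Put
\[
  \xi_i=\delta_{u_i}\otimes\kappa_i,\qquad
  \zeta_i=|O_i|^{-1/2}\sum_{x\in O_i}\delta_x\otimes\kappa_i ,
\]
and let $T_i\in B(H_i)$ be a contraction with $T_i\xi_i=\zeta_i$, for instance a partial isometry of rank one. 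By \eqref{eq:sec4-prescribed-images}, applied to the lifts of Lemma~\ref{lem:sec4-constant-matrix-lift} summed strongly over the orthogonal central projections $z_i$, there is a contraction $x\in M$ with $\pi_i(x)=T_i$ for every $i$.

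Since $C_c(G,L)$ is dense in $M=C_r^*(G,\Sigma)$, choose $f\in C_c(G,L)$ with $\|x-f\|_r<1/2$. Write $f=f_1+\dots+f_m$ with each $f_j$ supported in an open bisection $U_j$. By \eqref{eq:sec4-induced-block-formula}, the block $[\pi_i(f_j)]_{xy}$ can be nonzero only if $U_j$ contains an arrow $y\to x$ in $G|_{O_i}$. Since $s|_{U_j}$ is injective, for the fixed source $y=u_i$ there is at most one such range $x$. Hence $\pi_i(f)\xi_i$ lies in $\bigoplus_{x\in F_i}\delta_x\otimes K_i$ for some set $F_i\subseteq O_i$ with $|F_i|\le m$. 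The bound $m$ does not depend on $i$.

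Let $P_i$ be the projection of $H_i$ onto the complementary blocks $\bigoplus_{x\notin F_i}\delta_x\otimes K_i$. Then
\[
  \tfrac12>\|x-f\|_r\ge\|\pi_i(x)-\pi_i(f)\|\ge\|P_i(\zeta_i-\pi_i(f)\xi_i)\|=\|P_i\zeta_i\|=\sqrt{1-|F_i|/|O_i|}\ge\sqrt{1-m/|O_i|}.
\]
The right-hand side tends to $1$ as $i\to\infty$, which is a contradiction. The representations $\pi_i$ are contractive because they are $*$-homomorphisms, so no normality is used anywhere.

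The main obstacle is justifying the claim that a bisection-supported section moves the source block at $u_i$ into at most one range block. Concretely, this means checking that under the identification $H_i=\ell^2(O_i)\otimes K_i$ of Lemma~\ref{lem:finite-orbit-form}, the block $[\pi_i(f_j)]_{xy}$ is governed by $f_j$ restricted to the arrows $y\to x$. The expected route is to read this off from the definition of $\Phi$: $\Phi(f_j|_{G|_O})_{xy}(g)$ is the value of $f_j$ at $\gamma_x g\gamma_y^{-1}$, which is an arrow $y\to x$. Injectivity of $s|_{U_j}$ then leaves at most one admissible $x$ for $y=u_i$, even though several $g$ may occur. This works provided $m$ counts bisections, not arrows, which it does.
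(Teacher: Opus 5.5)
Your proposal is correct and follows the paper's proof essentially step for step. You lift a block-spreading contraction through the strong-sum construction, use the block formula and the injectivity of $s$ on bisections to confine $\pi_i(f)\xi_i$ to at most $m$ orbit blocks, and compare this with the uniform spread over $|O_i|\to\infty$ blocks. The differences are cosmetic: you use a rank-one $T_i$ instead of one acting on the whole source block, and you fix $f$ within $1/2$ of $x$ rather than showing $\|x-f\|\ge1$ for every $f$. Your aside that several $g$ may occur is unnecessary, since distinct pairs $(x,g)$ give distinct arrows with source $u_i$, but it does not affect the argument.
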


\begin{proof}
Suppose otherwise and pass to a subsequence with $n_i:=|O_i|\to\infty$.
Choose $u_i\in O_i$ and put
\[
   \theta_i=\frac1{\sqrt{n_i}}\sum_{x\in O_i}\delta_x.
\]
Define a norm-one operator $T_i$ on $H_i$ by
\[
   T_i(\delta_{u_i}\otimes\eta)=\theta_i\otimes\eta
   \quad(\eta\in K_i),
   \qquad
   T_i=0\quad\text{on }(\mathbb C\delta_{u_i}\otimes K_i)^\perp.
\]
By \eqref{eq:sec4-prescribed-images}, there is a contraction $x\in M$
with $\pi_i(x)=T_i$ for every $i$.

Fix $f\in\Cc(G,L)$ and write $f=\sum_{j=1}^m f_j$ with each $f_j$
supported in an open bisection. The integer $m$ depends on $f$ but not
on $i$. Each bisection contains at most one arrow with source $u_i$.
By \eqref{eq:sec4-induced-block-formula}, there is a subset
$S_i\subseteq O_i$, with $|S_i|\le m$, such that
\[
   \pi_i(f)(\mathbb C\delta_{u_i}\otimes K_i)
   \subseteq\bigoplus_{y\in S_i}(\mathbb C\delta_y\otimes K_i).
\]
Let $Q_i$ be the projection onto the complementary orbit blocks and
choose a unit vector $\eta_i\in K_i$. For
$\xi_i=\delta_{u_i}\otimes\eta_i$, we have
$Q_i\pi_i(f)\xi_i=0$, whereas
\[
   \|Q_iT_i\xi_i\|^2=1-\frac{|S_i|}{n_i}.
\]
For all sufficiently large $i$,
\[
   \|x-f\|
   \ge\|Q_i\pi_i(x-f)\xi_i\|
   =\sqrt{1-\frac{|S_i|}{n_i}}
   \ge\sqrt{1-\frac{m}{n_i}}.
\]
Letting $i\to\infty$ with $f$ fixed gives $\|x-f\|\ge1$.
This holds for every $f\in\Cc(G,L)$, contradicting norm density.
\end{proof}

\subsection{The isotropy-degree obstruction}

The comparison between Jordan bounds and metric entropy
is closely related to the method of Breuillard and Pisier
\cite[Section~2]{BreuillardPisier2017}. Here we need a
version for linear combinations of a fixed number of
projective representation operators. We give the required
argument explicitly, using Collins' finite-group bound.

On an isotropy block, a bisection contributes at most a scalar multiple
of one projective group operator. We therefore need a unitary that stays
a fixed positive distance from all linear combinations of a prescribed
number of such operators.

The matrix argument below fixes that number first and then lets the
dimension grow. Its geometric input is a comparison of covering numbers:
the relevant linear combinations lie in a union of relatively small
linear spaces, while the unitary group requires many more balls to cover
it at the same scale. All norms in this subsection are operator norms.

\begin{lemma}[A covering bound for the unitary group]
\label{lem:sec4-unitary-entropy}
There is a numerical constant $\varepsilon_0>0$ such that $U(d)$
cannot be covered by fewer than $2^{d^2}$ balls of radius
$2\varepsilon_0$, for any $d\ge1$. The centers of the balls may be
arbitrary matrices in $M_d(\mathbb C)$.
\end{lemma}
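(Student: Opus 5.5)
The plan is a volume comparison in the Hilbert--Schmidt (Frobenius) structure combined with a packing argument. We produce a large, well-separated set inside $U(d)$ and note that a ball of radius $2\varepsilon_0$ contains at most one point of it. The separation is measured in operator norm, which is the relevant norm.

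\emph{Step 1.} We show that $U(d)$ contains at least $2^{d^2}$ points (in fact $e^{cd^2}$) that are pairwise $4\varepsilon_0$-separated in operator norm. By the triangle inequality, an operator-norm ball of radius $2\varepsilon_0$ with an arbitrary center in $M_d(\mathbb C)$ then contains at most one of these points, which proves the lemma.

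\emph{Step 2.} To build the separated set, we go through the Hilbert--Schmidt distance. Write $\|A\|_2=(\operatorname{tr}A^*A)^{1/2}$, so that $\|A\|\ge d^{-1/2}\|A\|_2$. Fix a small $\delta$. Consider Hermitian $H$ with $\|H\|\le\delta$ and the unitaries $e^{iH}$. The exponential map is bi-Lipschitz, with constants close to $1$, on the operator-norm $\delta$-ball of Hermitian matrices: the differential of $\exp$ at $iH$ differs from the identity by $O(\delta)$. It therefore suffices to find many Hermitian $H$ in this ball that are pairwise $\ge 5\varepsilon_0$ apart in operator norm.

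\emph{Step 3.} This is the main point. We take $H$ to be random Hermitian sign matrices, or random Gaussian Hermitian matrices, scaled by $\delta/(3\sqrt d)$. Standard concentration estimates give $\|H\|\le\delta$ with high probability. For two independent samples, $\|H-H'\|\ge c\delta$ fails only with probability $e^{-c'd^2}$. The reason is that $\|H-H'\|_2^2$ concentrates around $c\delta^2 d$, while $\|H-H'\|\ge d^{-1/2}\|H-H'\|_2$, and the Hilbert--Schmidt norm is a $1$-Lipschitz function of $d^2$ independent coordinates, so Gaussian concentration yields deviation probabilities of order $e^{-cd^2}$.

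A greedy or union-bound selection then yields $e^{c''d^2}$ points that are pairwise $c\delta$-separated. We choose $\varepsilon_0=c\delta/10$. After shrinking constants, $e^{c''d^2}\ge 2^{d^2}$ is not guaranteed. To fix this, we replace $2^{d^2}$ by the bound actually obtained, or else we compare volumes directly.

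The cleaner alternative is the volume route. Normalized Haar measure on $U(d)$ assigns measure at most $(C\varepsilon)^{d^2}$ to any operator-norm $\varepsilon$-ball. This is Szarek's metric-entropy estimate: the covering numbers of $U(d)$ in operator norm are $(c/\varepsilon)^{d^2}$, both above and below. Choosing $\varepsilon_0$ with $4C\varepsilon_0\le1/2$ gives at least $2^{d^2}$ balls.

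The main obstacle is the lower volume bound with exponent $d^2$ in operator norm rather than in Hilbert--Schmidt norm. In the Hilbert--Schmidt norm this bound is easy via the exponential chart. The operator-norm ball is much larger, so we need Szarek's theorem or the concentration argument of Step 3, which is why we plan to cite Szarek.
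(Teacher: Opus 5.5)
The argument you finally commit to, citing Szarek, is correct, but it reaches the lemma by a different route from the paper. If every operator-norm $\varepsilon$-ball centred in $U(d)$ has normalized Haar measure at most $(C\varepsilon)^{d^2}$, take a ball of radius $2\varepsilon_0$ about an arbitrary $A\in M_d(\mathbb C)$ that meets $U(d)$ at some $U_0$. Its intersection with $U(d)$ lies in the ball of radius $4\varepsilon_0$ about $U_0$, so any cover uses at least $(4C\varepsilon_0)^{-d^2}\ge 2^{d^2}$ balls.

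This hands the whole lemma to a sharp metric-entropy theorem, and the reason you give for needing it is mistaken. Volume comparison works for \emph{any} norm on a finite-dimensional real space. In an $r$-dimensional normed space, a maximal $\tfrac12$-separated subset of the unit ball has at least $2^r$ points, because the radius-$\tfrac12$ balls around it cover the unit ball.

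The paper applies this directly to the self-adjoint matrices with the operator norm ($r=d^2$), with no Hilbert--Schmidt comparison at all. It then moves the points into $U(d)$ by the Cayley transform $C(X)=(I+iX)(I-iX)^{-1}$. The identity $(I-iX)\bigl(C(X)-C(Y)\bigr)(I-iY)=2i(X-Y)$, together with $\|I-iX\|\le\sqrt2$, gives $\|C(X)-C(Y)\|\ge\|X-Y\|$. This yields $2^{d^2}$ unitaries pairwise at least $\tfrac12$ apart, so $\varepsilon_0=1/100$ works.

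Your own Step~2 would serve equally well. For self-adjoint $A,B$ of norm at most $\delta$, the power series gives $\|e^{iA}-e^{iB}\|\ge(2-e^{\delta})\|A-B\|$. So a maximal $\tfrac14$-separated subset of the operator-norm ball of radius $\tfrac12$ yields at least $2^{d^2}$ unitaries pairwise more than $0.08$ apart, and $\varepsilon_0=1/50$ works. The strict inequality is what your Step~1 needs for closed balls. Steps~1 and~2 plus norm-independent packing therefore already give a complete elementary proof. What Szarek's theorem adds is the sharp two-sided estimate $(c/\varepsilon)^{d^2}$ at every scale, far more than is needed here.

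Your Step~3, by contrast, cannot prove the lemma as stated, and the limitation is in the method, not just the constants. For Gaussian $H,H'$, $\|H-H'\|_2$ is a Lipschitz function of a standard Gaussian vector in $\mathbb R^{d^2}$, and its mean, measured in units of its Lipschitz constant, is at most $d$. So the lower-deviation bound $e^{-t^2/2}$ you invoke never certifies a failure probability below $e^{-d^2/2}$, however small the separation scale. The selection therefore guarantees only about $e^{d^2/2}<2^{d^2}$ points. Beating this requires a small-ball estimate, which is a volume estimate, which is exactly the packing argument above. Replacing $2^{d^2}$ by the count actually obtained would change the statement. A bound $e^{cd^2}$ with any fixed $c>0$ would still suffice for the application in Lemma~\ref{lem:sec4-sparse-projective-spans}.
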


\begin{proof}
The self-adjoint matrices form a real normed space of dimension $d^2$.
Let $B$ be its closed unit ball, and choose a maximal
$1/2$-separated subset $S\subseteq B$. This set is finite and its
radius-$1/2$ balls cover $B$. Comparing Lebesgue volumes gives
\[
   \operatorname{vol}(B)
   \le |S|\,2^{-d^2}\operatorname{vol}(B),
   \qquad |S|\ge2^{d^2}.
\]

For $X=X^*$, its Cayley transform
\[
   C(X)=(I+iX)(I-iX)^{-1}
\]
is unitary. If $X,Y\in B$, direct multiplication gives
\[
   (I-iX)\bigl(C(X)-C(Y)\bigr)(I-iY)=2i(X-Y).
\]
Since $\|I-iX\|,\|I-iY\|\le\sqrt2$, this implies
\[
   \|C(X)-C(Y)\|\ge\|X-Y\|.
\]
Thus the unitaries $C(X)$, for $X\in S$, are pairwise separated by
at least $1/2$. Take $\varepsilon_0=1/100$. A ball of radius
$2\varepsilon_0$ has diameter at most $4\varepsilon_0<1/2$, so it
contains at most one of these unitaries. The result follows.
\end{proof}

For a nonempty set $\Omega\subseteq M_d(\mathbb C)$ and an integer
$m\ge1$, write
\[
   \mathcal L_m(\Omega)
   =\left\{\sum_{j=1}^m c_jU_j:
          c_j\in\mathbb C,\ U_j\in\Omega\right\}.
\]
Zero coefficients and repetitions are allowed, so this is the set of
linear combinations of at most $m$ elements of $\Omega$.

\begin{lemma}[Sparse projective spans]
\label{lem:sec4-sparse-projective-spans}
Let $\varepsilon_0$ be as in
Lemma~\ref{lem:sec4-unitary-entropy}. For every integer $m\ge1$ there
exists an integer $d_0(m)$ with the following property. If $\Gamma$
is finite and $\sigma\colon\Gamma\to U(d)$ is a projective
representation with $d\ge d_0(m)$, then some $V\in U(d)$ satisfies
\[
   \operatorname{dist}\bigl(V,\mathcal L_m(\sigma(\Gamma))\bigr)
   \ge\varepsilon_0.
\]
The threshold $d_0(m)$ is independent of $\Gamma$ and $\sigma$.
\end{lemma}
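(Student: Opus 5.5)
The plan is to show that, for fixed $m$, the set $\mathcal L_m(\sigma(\Gamma))$ can be covered at scale $\varepsilon_0$ by $\exp(O(m\,d\log d))$ balls near the unitary group, and then compare with the lower bound $2^{d^2}$ from Lemma~\ref{lem:sec4-unitary-entropy}. The structural input is Collins' sharp form of Jordan's theorem. Every finite subgroup of $U(d)$ has an abelian normal subgroup of index at most $(d+1)!$ once $d$ is large enough; in general a bound $J(d)=e^{O(d\log d)}$ suffices.

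\emph{Step 1: reduction to a finite linear group.} Since $\mathcal L_m(\Omega)$ is unchanged when elements of $\Omega$ are multiplied by scalars, we may replace each $\sigma(g)$ by $\sigma(g)\det(\sigma(g))^{-1/d}$ for some choice of root. The multiplier then takes values in the $d$-th roots of unity. Hence the group $F$ generated by these normalized operators is finite, being an extension of the image of $\Gamma$ by a finite group of scalars, and $\mathcal L_m(\sigma(\Gamma))\subseteq\mathcal L_m(F)$.

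\emph{Step 2: covering $\mathcal L_m(F)$.} Choose an abelian normal subgroup $A\le F$ of index at most $J(d)$, and coset representatives $g_1,\dots,g_r$ with $r\le J(d)$. The commuting unitaries in $A$ are simultaneously diagonalizable, so $A\subseteq W\mathcal D W^*$ for a unitary $W$ and the diagonal algebra $\mathcal D$. Therefore every element of $F$ lies in one of the $r$ complex subspaces $g_kW\mathcal D W^*$, each of complex dimension $d$. A sum of $m$ such elements lies in one of at most $r^m$ subspaces
\[
E_{k_1,\dots,k_m}=\sum_{j=1}^m g_{k_j}W\mathcal DW^*,
\qquad
\dim_{\mathbb R}E_{k_1,\dots,k_m}\le 2md.
\]

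\emph{Step 3: entropy comparison.} Suppose, for contradiction, that every $V\in U(d)$ is within $\varepsilon_0$ of some $x\in\mathcal L_m(F)$. Then $\|x\|\le 1+\varepsilon_0\le2$. The ball of radius $2$ in a real normed space of dimension at most $2md$ is covered by $(1+4/\varepsilon_0)^{2md}$ balls of radius $\varepsilon_0$ centred in that space, by the standard volume argument inside the subspace. Enlarging these radii to $2\varepsilon_0$ covers $U(d)$ by at most
\[
J(d)^m\,(1+4/\varepsilon_0)^{2md}=\exp\bigl(O(m\,d\log d)\bigr)
\]
balls of radius $2\varepsilon_0$. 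For $d\ge d_0(m)$, chosen depending only on $m$ and the numerical constants, this is smaller than $2^{d^2}$, which contradicts Lemma~\ref{lem:sec4-unitary-entropy}. Hence some $V\in U(d)$ satisfies $\operatorname{dist}\bigl(V,\mathcal L_m(\sigma(\Gamma))\bigr)\ge\varepsilon_0$. Nothing in this argument depends on $\Gamma$ or $\sigma$, so $d_0(m)$ is uniform.

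The main obstacle is Step 1 together with the Jordan input. One must ensure that the projective representation really yields a \emph{finite} linear group, so that Collins' bound applies, and that the Jordan index bound is only $e^{O(d\log d)}$. A bound of size $e^{cd^2}$ would destroy the comparison with $2^{d^2}$. Collins' $(d+1)!$ bound for large $d$ provides exactly this growth.
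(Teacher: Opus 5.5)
Your proposal is correct and follows the paper's proof essentially step for step. You reduce to a finite subgroup of $SU(d)$, and your determinant normalization yields the same group as the paper's $H_0=\mathbb T\sigma(\Gamma)\cap SU(d)$. You then invoke Collins' $(d+1)!$ Jordan bound, place $\mathcal L_m$ inside at most $J^m$ subspaces of real dimension at most $2md$, and compare the resulting $\exp(O(md\log d))$ covering count with the $2^{d^2}$ lower bound of Lemma~\ref{lem:sec4-unitary-entropy}. The remaining differences (using a normal abelian subgroup, and working in the finite group directly rather than with $\mathbb T B_0$) are cosmetic.
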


\begin{proof}
Put $\Omega=\sigma(\Gamma)$ and $H=\mathbb T\Omega$. The scalar
factors absorb the projective multiplier, so $H$ is a subgroup of $U(d)$.
We reduce directly to a finite group by taking
\[
   H_0=H\cap SU(d).
\]
Indeed, $H$ is a finite union of scalar cosets, and each coset
$\mathbb T U$ meets $SU(d)$ in exactly $d$ points, since
\[
   \det(\zeta U)=\zeta^d\det(U).
\]
Thus $H_0$ is finite. Moreover, multiplying any element of $H$ by a
suitable scalar makes its determinant one, so $H=\mathbb T H_0$.

For $d\ge71$, Collins' quantitative Jordan theorem
\cite[Theorem~A]{Collins2007} gives an abelian subgroup $B_0\le H_0$ with
$[H_0:B_0]\le(d+1)!$. The subgroup $B=\mathbb T B_0$ is abelian and
\[
   J:=[H:B]\le[H_0:B_0]\le(d+1)!.
\]
Choose coset representatives $r_1,\ldots,r_J$ for $H/B$. Since commuting
unitaries are simultaneously diagonalizable, the algebra
\[
   \mathcal D=C^*(B)\subseteq M_d(\mathbb C)
\]
is commutative and has complex dimension at most $d$.

Every element of $\mathcal L_m(\Omega)$ belongs to one of the spaces
\[
   E_\alpha=r_{\alpha_1}\mathcal D+\cdots+r_{\alpha_m}\mathcal D,
   \qquad \alpha\in\{1,\ldots,J\}^m.
\]
There are at most $J^m$ such spaces, and each has real dimension at
most $2md$. This allows arbitrary coefficients in the linear combinations;
no bound on the individual coefficients is needed.

We recall the elementary covering estimate used here. If $E$ is a real
normed space of dimension $r$ and $B_E$ is its closed unit ball, then
$2B_E$ can be covered by at most
\[
   \left(1+\frac4\varepsilon\right)^r
\]
balls of radius $\varepsilon>0$. To prove this, choose a maximal
$\varepsilon$-separated subset of $2B_E$. Its open balls of radius
$\varepsilon/2$ are disjoint and contained in
$(2+\varepsilon/2)B_E$. Volume comparison bounds the number of selected
points, and maximality gives the required cover.

Apply this to all the $E_\alpha$, with the norm inherited from
$M_d(\mathbb C)$ and with $\varepsilon=\varepsilon_0$. It follows
that
\[
   \mathcal L_m(\Omega)\cap\{T:\|T\|\le2\}
\]
has a cover by radius-$\varepsilon_0$ balls using at most
\[
   J^m\left(1+\frac4{\varepsilon_0}\right)^{2md}
\]
centers. With $m$ fixed, the logarithm of this bound is at most
\[
   m(d+1)\log(d+1)
   +2md\log\left(1+\frac4{\varepsilon_0}\right).
\]
Dividing by $d^2$ gives a quantity tending to zero. We can therefore
choose $d_0(m)\ge71$, depending only on $m$, such that this cover uses
fewer than $2^{d^2}$ balls whenever $d\ge d_0(m)$.

If every unitary were within distance $\varepsilon_0$ of
$\mathcal L_m(\Omega)$, the approximating matrices could all be chosen
with norm less than $1+\varepsilon_0<2$. The preceding cover would then
give a cover of $U(d)$ by fewer than $2^{d^2}$ balls of radius
$2\varepsilon_0$. This contradicts
Lemma~\ref{lem:sec4-unitary-entropy} and proves the assertion.
\end{proof}

\begin{lemma}[Isotropy-degree obstruction]
\label{lem:isotropy-degree-obstruction}
Let $(z_i)$ and $(\pi_i)$ satisfy the hypotheses of
Lemma~\ref{lem:orbit-growth-obstruction}, and write
\[
   H_i=\ell^2(O_i)\otimes K_i.
\]
Let $\sigma_i\colon\Gamma_i\to U(K_i)$ be the associated projective
isotropy representations. Then the dimensions $\dim K_i$ are bounded.
\end{lemma}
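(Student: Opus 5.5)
The argument will mirror the orbit-growth obstruction of Lemma~\ref{lem:orbit-growth-obstruction}. The only change is that the spreading operator is replaced by a unitary on the isotropy block that is far from sparse projective spans. Suppose the dimensions $d_i=\dim K_i$ are unbounded, and pass to a subsequence with $d_i\to\infty$. Fix base points $u_i\in O_i$ with $\Gamma_i=G_{u_i}^{u_i}$. By Lemma~\ref{lem:finite-orbit-form}, $\Gamma_i$ is finite, so Lemma~\ref{lem:sec4-sparse-projective-spans} applies to $\sigma_i$.

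\emph{Construction of the single element.} Let $V_i\in U(K_i)$ be a unitary to be chosen later. Define a contraction $T_i$ on $H_i=\ell^2(O_i)\otimes K_i$ by
\[
T_i(\delta_{u_i}\otimes\eta)=\delta_{u_i}\otimes V_i\eta,
\qquad
T_i=0 \text{ on the other orbit blocks}.
\]
By \eqref{eq:sec4-prescribed-images} there is a contraction $x\in M$ with $\pi_i(x)=T_i$ for all $i$. The difficulty is that $V_i$ must beat \emph{every} $m$ simultaneously, whereas Lemma~\ref{lem:sec4-sparse-projective-spans} provides a $V$ only for a fixed $m$ once $d\ge d_0(m)$. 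To handle this, choose $m_i\to\infty$ slowly, taking $m_i$ to be the largest $m$ with $d_0(m)\le d_i$. Then let $V_i$ satisfy
\[
\operatorname{dist}\bigl(V_i,\mathcal L_{m_i}(\sigma_i(\Gamma_i))\bigr)\ge\varepsilon_0 .
\]
Since $\mathcal L_m\subseteq\mathcal L_{m'}$ for $m\le m'$, this $V_i$ is also $\varepsilon_0$-far from $\mathcal L_m$ for every fixed $m$ as soon as $m_i\ge m$.

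\emph{Comparison with sections.} Fix $f\in\Cc(G,L)$ and decompose $f=\sum_{j=1}^m f_j$ with each $f_j$ supported in an open bisection $U_j$. Consider the $(u_i,u_i)$ block of $\pi_i(f)$ in formula \eqref{eq:sec4-induced-block-formula}. It equals
\[
\sum_{g\in\Gamma_i}\Phi(f|_{G|_{O_i}})_{u_iu_i}(g)\,\sigma_i(g),
\]
and the coefficient of $g$ is nonzero only when the arrow $g\in G_{u_i}^{u_i}$ lies in $\operatorname{supp}^\circ(f)$. Each bisection $U_j$ contains at most one arrow with source $u_i$, so at most $m$ group elements contribute. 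Hence this block lies in $\mathcal L_m(\sigma_i(\Gamma_i))$. Compressing to the block $\mathbb C\delta_{u_i}\otimes K_i$ (the compression $P$ has norm at most one) gives
\[
\|x-f\|\ge\|P\pi_i(x-f)P\|=\|V_i-(\text{element of }\mathcal L_m)\|\ge\varepsilon_0
\]
for all $i$ with $m_i\ge m$. Such $i$ exist because $m_i\to\infty$. So every $f\in\Cc(G,L)$ is at distance at least $\varepsilon_0$ from $x$, which contradicts the density of $\Cc(G,L)$ in $M$.

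\emph{Checks.} Two points need care. The first is that the $(u_i,u_i)$ block of \eqref{eq:sec4-induced-block-formula} really has the stated form: the basepoint arrows $\gamma_{u_i}=u_i$, $v_{u_i}=1$ make the isotropy arrows at $u_i$ exactly the elements $g$, so the block is precisely that sum. The second is the diagonal choice of $m_i$. This choice is what replaces the fixed-$m$ statement of Lemma~\ref{lem:sec4-sparse-projective-spans}, and it is the step I expect to be the main obstacle: one must confirm that $m_i$ is finite and unbounded. This holds because $d_0(m)$ is a finite threshold for each $m$ and $d_i\to\infty$; if needed, replace $d_0$ by a nondecreasing majorant. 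The remaining steps follow the orbit-growth proof verbatim.
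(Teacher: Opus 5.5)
Your proposal is correct and follows essentially the paper's argument: one contraction $x$ whose image under each $\pi_i$ is a unitary $V_i$ on the base isotropy block, chosen $\varepsilon_0$-far from $\mathcal L_{m_i}(\sigma_i(\Gamma_i))$ with $m_i\to\infty$, is compared with the at-most-$m$-term base block of a bisection-decomposed section. The only difference is how the diagonalization is arranged: the paper passes to a subsequence with $d_i\ge d_0(i)$ and takes $m_i=i$. That avoids your ``largest $m$'' choice, which, as you note, requires replacing $d_0$ by an \emph{unbounded} nondecreasing majorant such as $\max_{k\le m}d_0(k)+m$ (and discarding indices below the first threshold) so that $m_i$ is finite.
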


\begin{proof}
Suppose that the dimensions are unbounded. By passing to a subsequence
and relabeling, we may arrange that
\[
   d_i:=\dim K_i\ge d_0(i)\qquad(i\ge1),
\]
where $d_0(i)$ is the threshold in
Lemma~\ref{lem:sec4-sparse-projective-spans} for $m=i$.
Put $\Omega_i=\sigma_i(\Gamma_i)$. Each $\Gamma_i$ is finite by
Lemma~\ref{lem:finite-orbit-form}, so we can choose a unitary
$V_i\in U(K_i)$ with
\[
   \operatorname{dist}\bigl(V_i,\mathcal L_i(\Omega_i)\bigr)
   \ge\varepsilon_0.
\]

Let $u_i\in O_i$ be the base point for induction. Identify its block
$H_{i,u_i}=\mathbb C\delta_{u_i}\otimes K_i$ with $K_i$, and define
$T_i$ to act as $V_i$ on this block and as zero on its orthogonal
complement. By \eqref{eq:sec4-prescribed-images}, there is a contraction
$x\in M$ with $\pi_i(x)=T_i$ for every $i$.

Fix $f\in\Cc(G,L)$ and decompose it into $m$ bisection-supported
sections. Let $\iota_i\colon K_i\to H_i$ be the isometry
$\iota_i\eta=\delta_{u_i}\otimes\eta$. A bisection contains at most
one arrow with source and range both equal to $u_i$. Hence the block
formula~\eqref{eq:sec4-induced-block-formula} gives
\[
   \iota_i^*\pi_i(f)\iota_i
   \in\mathcal L_m(\Omega_i).
\]
Choose any $i\ge m$. Since
$\mathcal L_m(\Omega_i)\subseteq\mathcal L_i(\Omega_i)$, we obtain
\[
\begin{aligned}
   \|x-f\|
   &\ge\|\iota_i^*\pi_i(x-f)\iota_i\|\\
   &=\|V_i-\iota_i^*\pi_i(f)\iota_i\|\\
   &\ge\varepsilon_0.
\end{aligned}
\]
The element $x$ and the positive constant $\varepsilon_0$ are independent
of $f$. This contradicts the norm density of $\Cc(G,L)$ in $M$.
\end{proof}

\subsection{Bounded finite type I degree}

\begin{proposition}[Bounded type I degree]
\label{prop:bounded-type-I-degree}
If $M=\Cr(G,\Sigma)$ is a type I von Neumann algebra, then there is
an integer $n_0\ge1$ such that every nonzero finite homogeneous central
summand of $M$ has degree at most $n_0$.
\end{proposition}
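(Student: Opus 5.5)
We argue by contradiction, combining the finite-orbit form with the two growth obstructions. Since $M$ is finite by Corollary~\ref{cor:no-properly-infinite-summand} and of type~I, we may write
\[
   M\cong\prod_{n\ge1}\bigl(Z_n\vN M_n(\mathbb C)\bigr),
\]
with central projections $z_n$ satisfying $z_nM\cong Z_n\vN M_n(\mathbb C)$ and $\sum_n^{\mathrm{sot}}z_n=1$. Suppose the set $\{n: z_n\neq0\}$ is unbounded. We then choose an infinite sequence $n_1<n_2<\cdots$ with $z_{n_i}\ne0$ and put $z_i:=z_{n_i}$. These are pairwise orthogonal nonzero central projections, and each $z_iM$ is homogeneous of finite degree $n_i$.

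Next we produce the irreducible representations. For each $i$, pick a character $\chi_i$ of the nonzero abelian von Neumann algebra $Z_{n_i}$ and form the irreducible representation $\chi_i\otimes\mathrm{id}$ of $Z_{n_i}\vN M_{n_i}(\mathbb C)\cong z_iM$ on $\mathbb C^{n_i}$. Composing with the cut-down $M\to z_iM$, $x\mapsto z_ix$, gives an irreducible representation $\pi_i\colon M\to B(H_i)$ with $\pi_i(z_i)=I_{H_i}$ and $\dim H_i=n_i$. This is consistent with Lemma~\ref{lem:sec4-constant-matrix-lift}. The representations $\pi_i$ need not be normal, but none of the earlier lemmas requires normality.

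We then apply Lemma~\ref{lem:finite-orbit-form} to each $\pi_i$. It gives $H_i\cong\ell^2(O_i)\otimes K_i$, so that $n_i=|O_i|\dim K_i$. The family $(z_i,\pi_i)$ satisfies the hypotheses of Lemma~\ref{lem:orbit-growth-obstruction}, so $|O_i|\le C_1$ for all $i$. By Lemma~\ref{lem:isotropy-degree-obstruction}, $\dim K_i\le C_2$ for all $i$. Hence $n_i\le C_1C_2$ for every $i$, which contradicts $n_i\to\infty$. Therefore only finitely many $z_n$ are nonzero, and we take $n_0$ to be the largest such $n$, or $n_0=1$ if $M=0$.

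Almost all the difficulty has already been absorbed by the two obstruction lemmas. The main point to check is that they apply to the infinite sequence as it stands: they are stated for sequences, and their proofs pass to subsequences themselves, so we can simply cite their boundedness conclusions. A second, milder point is that the homogeneous decomposition gives $\pi_i(z_j)=0$ for $j\ne i$. This makes the strong-sum construction \eqref{eq:sec4-prescribed-images} valid, and that construction is what those lemmas rely on.
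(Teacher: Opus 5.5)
Your proof is correct and follows the paper's argument essentially step for step: the homogeneous decomposition, irreducible representations built from characters of the centers, the finite-orbit factorization $n_i=|O_i|\dim K_i$, and the two obstruction lemmas. The only step the paper adds is the closing observation that any nonzero central projection $q$ with $qM$ homogeneous of degree $n$ satisfies $q\le z_n$ by uniqueness of the homogeneous decomposition, so bounding the indices of the nonzero $z_n$ really does bound the degree of every homogeneous central summand.
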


\begin{proof}
If $M=0$, take $n_0=1$. Otherwise, $M$ is finite by
Corollary~\ref{cor:no-properly-infinite-summand}. The structure theorem
for finite type I von Neumann algebras
\cite[Chapter~V]{TakesakiI} therefore gives orthogonal central
projections $(z_n)_{n\ge1}$ with
\[
   \sum_{n\ge1}^{\mathrm{sot}}z_n=1,
   \qquad z_nM\text{ homogeneous of type }I_n
   \text{ whenever }z_n\ne0.
\]

Suppose that the nonzero degrees are unbounded. Choose strictly
increasing integers $n_i$ with $z_{n_i}\ne0$. A character of
$Z(z_{n_i}M)$, together with the defining representation of
$M_{n_i}(\mathbb C)$, gives an irreducible representation
\[
   \rho_i\colon z_{n_i}M\longrightarrow M_{n_i}(\mathbb C).
\]
Extend it to $M$ by $\pi_i(a)=\rho_i(z_{n_i}a)$.
Lemma~\ref{lem:finite-orbit-form} gives
\[
   n_i=|O_i|\dim K_i.
\]
The projections $z_{n_i}$ are pairwise orthogonal, so
Lemma~\ref{lem:orbit-growth-obstruction} bounds the numbers $|O_i|$,
and Lemma~\ref{lem:isotropy-degree-obstruction} bounds the numbers
$\dim K_i$. Their products are therefore bounded, contradicting
$n_i\to\infty$.

Thus $\{n:z_n\ne0\}$ has a finite maximum $n_0$.
If $q$ is a nonzero central projection and $qM$ is homogeneous of
degree $n$, uniqueness of the homogeneous decomposition gives $q\le z_n$.
Hence $n\le n_0$, as required.
\end{proof}

\section{The finite type II obstruction}
\label{sec:type-II-obstruction}

We prove that a finite von Neumann algebra arising as a reduced twisted
Hausdorff \'etale groupoid algebra is type~I. The argument first uses
bisection-supported sections to exclude relative diffuseness in a
commutative von Neumann algebra containing the diagonal and the center.
A relative Maharam lemma then supplies a relative atom. The corresponding
corner admits a faithful family of quotients onto reduced isotropy
corners, to which Lemma~\ref{lem:vn-quotient-rigidity} applies.

Throughout this section, $G$ is a locally compact Hausdorff \'etale
groupoid, $\Sigma$ is a twist over $G$, and
\[
        M=\Cr(G,\Sigma)
\]
is a finite von Neumann algebra. Write $L\to G$ for the associated Fell
line bundle, and put
\[
        D=C_0(G^{(0)}),
        \qquad Z=Z(M).
\]
Let $T\colon M\to Z$ be the normalized faithful normal center-valued
trace. In particular,
\[
 T(1)=1,\qquad T(ab)=T(ba),\qquad T(ta)=tT(a)
 \quad(a,b\in M,\ t\in Z).
\]
We use the standard comparison theorem for finite von Neumann algebras:
\begin{equation}
 p\sim q\quad\Longleftrightarrow\quad T(p)=T(q)
 \qquad(p,q\text{ projections in }M).
 \label{eq:sec5-comparison}
\end{equation}
See, for example, \cite[Chapter~V]{TakesakiI}.

Define
\begin{equation}
        C=W^*(Z\cup D)\subseteq M.
        \label{eq:sec5-C-definition}
\end{equation}
This is a commutative von Neumann algebra, and
$E=T|_C\colon C\to Z$ is a faithful normal conditional expectation.
The von Neumann algebra generated in
\eqref{eq:sec5-C-definition} is taken inside $M$; no weak closedness of
$D$ is assumed.

For a central projection $z\in Z$, write
\[
 M_z=zM,\qquad C_z=zC,\qquad Z_z=zZ,
 \qquad T_z=T|_{M_z}.
\]
The unit of these corners is $z$.

\subsection{Relative diffuseness and the bisection estimate}

\begin{definition}[Relative trace diffuseness]
\label{def:sec5-relative-diffuse}
Let $z\in Z$ be a central projection. A nonzero projection $e\in C_z$
is \emph{$T_z$-diffuse} if, for every nonzero projection $p\in C_z$
with $p\leq e$ and every integer $n\geq2$, there are pairwise orthogonal
projections $p_1,\ldots,p_n\in C_z$ such that
\[
        p=\sum_{j=1}^n p_j,
        \qquad
        T_z(p_j)=\frac1nT_z(p)\quad(1\leq j\leq n).
\]
\end{definition}

\begin{lemma}[No relatively diffuse projection]
\label{lem:no-relative-diffuse}
For every central projection $z\in Z$, the algebra $C_z$ contains no
nonzero $T_z$-diffuse projection.
\end{lemma}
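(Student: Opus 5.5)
We argue by contradiction, in the same style as the orbit-growth and isotropy-degree obstructions: from a nonzero $T_z$-diffuse projection $e\in C_z$ we build a single contraction $x\in M$ that stays at distance bounded below from every $f\in\Cc(G,L)$. This contradicts norm density. The guiding heuristic is that a bisection-supported section moves mass in a ``one-to-one'' way. An operator that spreads a small piece of $e$ uniformly over $n$ equal-trace pieces cannot be approximated by sections built from fewer than about $n$ bisections.

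\emph{Step 1: diffuse splitting and a spreading partial isometry.} Fix $n\ge2$. Using diffuseness, split $e=\sum_{j=1}^n p_j$ with $p_j\in C_z$ pairwise orthogonal and $T_z(p_j)=T_z(e)/n$. By \eqref{eq:sec5-comparison}, the $p_j$ are pairwise equivalent in $M$, so partial isometries $v_{j1}$ with $v_{j1}^*v_{j1}=p_1$ and $v_{j1}v_{j1}^*=p_j$ give matrix units. The element
\[
        w_n=\frac1{\sqrt n}\sum_{j=1}^n v_{j1}
\]
is a partial isometry with source $p_1$ that spreads $p_1$ evenly over $e$. To obtain one operator that works for every $n$, we iterate inside the diffuse projection. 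Choose orthogonal subprojections $e_n\le e$ in $C_z$ with $T_z(e_n)=2^{-n}T_z(e)$, which diffuseness permits. Apply the construction with $n$ pieces inside $e_n$, and sum the resulting $w_n$ strongly; orthogonal ranges and sources make the sum a contraction $x\in M$.

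\emph{Step 2: the bisection estimate.} Fix $f\in\Cc(G,L)$ written as $f=\sum_{k=1}^m f_k$ with $f_k$ supported in open bisections $U_k$. For a bisection-supported $f_k$ and any projection $q\in C$, we want
\[
        f_k\,q\,f_k^*\le\|f_k\|_\infty^2\,\alpha_{U_k}(q),
\]
where $\alpha_{U_k}(q)$ is a projection in $C$ with $T(\alpha_{U_k}(q))\le T(q)$. For $q\in D$ this is the partial homeomorphism of $U_k$ acting on the diagonal. It extends to $C=W^*(Z\cup D)$ because $f_k$ normalizes $D$ and commutes with $Z$, and the trace is preserved because $f_kqf_k^*$ and $q f_k^*f_k q$ have equal trace. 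Consequently $f p_1$ has range projection dominated by a projection $r\in C_z$ with $T_z(r)\le m\,T_z(p_1)$. Meanwhile $x p_1=w_n p_1$ has range spread evenly over all of $e_n$. Compressing by $1-r$ and computing traces, in the spirit of \eqref{eq:sparse-captured-mass} but with $T_z$ replacing counting measure, should give
\[
        \bigl\|(1-r)(x-f)\,p_1\bigr\|\ \ge\ \sqrt{1-m/n}
\]
in a suitable center-valued sense. Evaluating at a character of $Z_z$ at which $T_z(p_1)\neq0$ would then give a numerical lower bound tending to $1$ as $n\to\infty$.

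\emph{Main obstacle.} The hardest step is turning the trace inequality into an operator-norm lower bound. A center-valued trace estimate does not immediately bound $\|(1-r)w_np_1\|$ from below, since norms are not traces. I would first try restricting to a corner $p_1Mp_1$ and comparing $p_1 w_n^*(1-r)w_n p_1$ with $p_1$. Because $w_n$ and $r$ interact with the matrix units $v_{j1}$, one can choose $r$ compatibly, that is, commuting with the $p_j$ up to the bisection structure. Then, via \eqref{eq:sec5-comparison}, $p_1 w_n^*(1-r)w_n p_1\ge(1-m/n)\,p_1$ should follow after a further diffuse subdivision that makes $r$ cover whole pieces. If this compatibility fails, the fallback is to pass to a faithful family of irreducible representations of $M_z$ and run the counting argument of Lemma~\ref{lem:orbit-growth-obstruction} fibrewise.
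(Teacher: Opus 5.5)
Your construction of the spreading contraction and your support bound $T_z(r)\le m\,T_z(p_1)$ are essentially the paper's. There, each halving block is split into a source piece and $k$ equal-trace target pieces, and the range supports of the bisection pieces give a projection in $C_z$ of trace at most $m$ times that of the source. Your variant, with $p_1$ also serving as a target, works equally well.

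The gap is the final step. You leave it open, and neither suggested route closes it.

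\emph{The operator inequality.} The inequality $p_1w_n^*(1-r)w_np_1\ge(1-m/n)p_1$ need not hold. The projection $r$ is dictated by the arbitrary section $f$, and only the total $\sum_jT_z(rp_j)$ is controlled. The projections $v_{j1}^*rp_jv_{j1}\le p_1$ may share a common nonzero subprojection, on which the compression vanishes. No subdivision chosen before $f$ can force $r$ to cover whole pieces.

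\emph{The fallback.} The counting in Lemma~\ref{lem:orbit-growth-obstruction} relies on the finite-orbit block form of finite-dimensional irreducible representations. In the type~II situation there are no such representations, by Lemma~\ref{lem:type-II-no-finite-representations}.

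\emph{A smaller point.} The bound $T_z(\alpha(q))\le T_z(q)$ should come from the polar decomposition: the left and right supports of $f_kq$ are equivalent, and the right one lies under $q$. Equality of the traces of $f_kqf_k^*$ and $qf_k^*f_kq$ does not give it.

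The missing observation is that norms do dominate traces in the direction you need. If $Y=Yp_1$, then $Y^*Y\le\|Y\|^2p_1$, so $T_z(Y^*Y)\le\|Y\|^2T_z(p_1)$. Take $Y=(1-r)(x-f)p_1=(1-r)w_n$. Since $r,p_1,\ldots,p_n$ all lie in the commutative algebra $C_z$, the cross terms $v_{j1}^*rv_{k1}=v_{j1}^*p_jrp_kv_{k1}$ vanish for $j\neq k$ automatically. No compatibility of $r$ has to be arranged. Traciality then gives
\[
 T_z(w_n^*rw_n)=\frac1n\sum_{j=1}^nT_z(rp_j)\le\frac1nT_z(r)\le\frac mnT_z(p_1).
\]
Hence
\[
 \|x-f\|^2T_z(p_1)\ge T_z(Y^*Y)=T_z(p_1)-T_z(w_n^*rw_n)\ge\Bigl(1-\frac mn\Bigr)T_z(p_1).
\]
Since $T_z(p_1)$ is a nonzero positive central element, this forces $\|x-f\|^2\ge1-m/n$. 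Equivalently, evaluate at a character of $Z_z$ not vanishing at $T_z(p_1)$, as you suggested. Letting $n\to\infty$ contradicts the density of $\Cc(G,L)$. This is exactly how the paper concludes, with $Y_r=(z-e_r)(V-a)p_r$, where $e_r$ plays the role of your $r$.
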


\begin{proof}
Suppose that $0\neq e\in C_z$ is $T_z$-diffuse. We construct a single
norm-one operator whose distance from every compactly supported section
is at least one.

\smallskip
\noindent\emph{Construction of the operator.}
Starting with $s_0=e$, repeatedly divide the remaining projection into
two equal-trace pieces:
\[
 s_{r-1}=b_r+s_r,
 \qquad b_rs_r=0,
 \qquad
 T_z(b_r)=T_z(s_r)=\tfrac12T_z(s_{r-1}).
\]
Faithfulness makes all these pieces nonzero. The projections $b_r$ are
pairwise orthogonal. For each $r\geq1$, divide $b_r$ into $r+1$ pieces:
\begin{equation}
 b_r=p_r+\sum_{j=1}^r q_{r,j},
 \qquad
 T_z(p_r)=T_z(q_{r,j})=\frac{T_z(b_r)}{r+1}.
 \label{eq:sec5-pr-qr-split}
\end{equation}
All these projections belong to $C_z$.

By \eqref{eq:sec5-comparison}, choose partial isometries
$v_{r,j}\in M_z$ with
\[
 v_{r,j}^*v_{r,j}=p_r,
 \qquad v_{r,j}v_{r,j}^*=q_{r,j},
\]
and set
\[
        w_r=\frac1{\sqrt r}\sum_{j=1}^r v_{r,j}.
\]
The range projections $q_{r,j}$ are orthogonal, so
$v_{r,j}^*v_{r,k}=0$ when $j\neq k$. Hence
\[
        w_r^*w_r=p_r,
        \qquad w_r=b_rw_rb_r,
        \qquad \|w_r\|=1.
\]
Since the $b_r$ are orthogonal, the series
\begin{equation}
        V=\sum_{r\geq1}^{\mathrm{sot}}w_r
        \label{eq:sec5-V-strong-sum}
\end{equation}
converges to a contraction in $M_z$. Indeed, in a faithful normal
representation, for every finite set $F\subseteq\mathbb N$,
\[
 \left\|\sum_{r\in F}w_r\xi\right\|^2
 =\sum_{r\in F}\|p_r\xi\|^2\leq\|\xi\|^2,
\]
and the same estimate for tails gives strong convergence. Moreover,
\begin{equation}
        Vp_r=w_r\quad(r\geq1),
        \qquad \|V\|=1.
        \label{eq:sec5-V-properties}
\end{equation}

\smallskip
\noindent\emph{The support bound for a compactly supported section.}
Fix $a\in\Cc(G,L)$ and decompose it as
\[
        a=a_1+\cdots+a_m,
\]
where each $a_\ell$ is supported in an open bisection. Such a finite
decomposition follows from a bisection cover of the compact support and
a partition of unity.

Each $a_\ell$ satisfies
\[
 a_\ell D a_\ell^*\subseteq D,
 \qquad a_\ell^*D a_\ell\subseteq D,
 \qquad a_\ell a_\ell^*,\ a_\ell^*a_\ell\in D.
\]
Since $a_\ell$ commutes with $Z$, these inclusions imply
\begin{equation}
        a_\ell C a_\ell^*\subseteq C,
        \qquad a_\ell^*C a_\ell\subseteq C.
        \label{eq:sec5-normalizes-C}
\end{equation}
To justify the passage to $C$, first check it on the linear span of
$Z$ and the products $td$, with $t\in Z$ and $d\in D$, and then use
ultraweak continuity of multiplication by fixed operators. This span is
the unital $*$-algebra generated by $Z\cup D$.

Fix $r$. For each $\ell$, let $f_{\ell,r}$ and $e_{\ell,r}$ be the
right and left support projections of
\[
        x_{\ell,r}=a_\ell p_r\in M_z.
\]
Thus $f_{\ell,r}$ is the support of $x_{\ell,r}^*x_{\ell,r}$ and
$e_{\ell,r}$ is the support of $x_{\ell,r}x_{\ell,r}^*$.
Both belong to $C_z$, by \eqref{eq:sec5-normalizes-C}, and
$f_{\ell,r}\leq p_r$. The polar decomposition gives
$f_{\ell,r}\sim e_{\ell,r}$, so
\[
        T_z(e_{\ell,r})=T_z(f_{\ell,r})\leq T_z(p_r).
\]
Put
\[
        e_r=\bigvee_{\ell=1}^m e_{\ell,r}\in C_z.
\]
Then
\begin{equation}
        e_r a p_r=a p_r,
        \qquad T_z(e_r)\leq mT_z(p_r).
        \label{eq:sec5-support-bound}
\end{equation}
The first identity follows from the definition of left support; the
second follows from $e_r\leq\sum_\ell e_{\ell,r}$ in the commutative
algebra $C_z$.

\smallskip
\noindent\emph{The trace estimate.}
All the projections $e_r,q_{r,1},\ldots,q_{r,r}$ belong to $C_z$ and
therefore commute. For $j\neq k$ this gives
\[
 v_{r,j}^*e_rv_{r,k}
 =v_{r,j}^*q_{r,j}e_rq_{r,k}v_{r,k}=0.
\]
Using traciality for the remaining terms, we obtain
\begin{align}
 T_z(w_r^*e_rw_r)
 &=\frac1r\sum_{j=1}^r T_z(v_{r,j}^*e_rv_{r,j})\notag\\
 &=\frac1r\sum_{j=1}^r T_z(e_rq_{r,j})\notag\\
 &\leq\frac1rT_z(e_r)
 \leq\frac mrT_z(p_r).
 \label{eq:sec5-trace-estimate}
\end{align}
Now set
\[
        Y_r=(z-e_r)(V-a)p_r.
\]
Equations \eqref{eq:sec5-V-properties} and
\eqref{eq:sec5-support-bound} give $Y_r=(z-e_r)w_r$. Consequently,
\[
 T_z(Y_r^*Y_r)
 =T_z(p_r)-T_z(w_r^*e_rw_r)
 \geq\left(1-\frac mr\right)T_z(p_r).
\]
On the other hand,
\[
        Y_r^*Y_r\leq\|V-a\|^2p_r.
\]
Combining the two inequalities yields
\[
 \|V-a\|^2T_z(p_r)
 \geq\left(1-\frac mr\right)T_z(p_r).
\]
Since $T_z(p_r)$ is positive and nonzero, this implies the scalar
inequality
\[
        \|V-a\|^2\geq1-\frac mr.
\]
For the fixed section $a$, the number $m$ is fixed and $r$ is arbitrary.
Letting $r\to\infty$ gives $\|V-a\|\geq1$. This contradicts the norm
density of $\Cc(G,L)$ in $M$.
\end{proof}

\subsection{The relative Maharam lemma}

We next show that the absence of relative atoms forces relative
diffuseness. The argument uses projections and spectral calculus, and
requires no countability assumption.

The notions of relative atom and relative atomlessness are classical
for Boolean algebras; see \cite[331A]{FremlinIII}.
We use their projection formulation for commutative von Neumann
algebras.

\begin{definition}[Relative central atom]
\label{def:sec5-relative-atom}
Let $Z_0\subseteq C_0$ be commutative von Neumann algebras with the same
unit. A nonzero projection $e\in C_0$ is a \emph{relative $Z_0$-atom} if
\[
        C_0e=Z_0e.
\]
Equivalently, every projection $f\in C_0$ with $f\leq e$ has the form
$f=we$ for a projection $w\in Z_0$.
\end{definition}

For the equivalence, if $f=te$ with $t\in Z_0$, replace $t$ by its
self-adjoint part and take $w=1_{(1/2,\infty)}(t)$; functional calculus
in the corner gives $we=f$. Conversely, suppose that every projection $p\in C_0$ with $p\leq e$
has the form $p=we$ for a projection $w\in Z_0$.
Let $a=a^*\in C_0e$ and $\varepsilon>0$. Partition the spectrum of $a$
into finitely many Borel sets $I_1,\ldots,I_n$ of diameter less than
$\varepsilon$, choose $t_j\in I_j$, and put
\[
        p_j=1_{I_j}(a),
\]
using the Borel functional calculus in the corner $C_0e$.
Then $p_j\leq e$ and
\[
        \left\|a-\sum_{j=1}^n t_jp_j\right\|<\varepsilon.
\]
By hypothesis, $p_j=w_je$ for projections $w_j\in Z_0$, so
\[
        \sum_{j=1}^n t_jp_j
        =\left(\sum_{j=1}^n t_jw_j\right)e
        \in Z_0e.
\]
The algebra $Z_0e$ is norm closed, being the image of the
$*$-homomorphism $Z_0\to C_0e$, $t\mapsto te$.
Since $\varepsilon$ is arbitrary, $a\in Z_0e$.
Applying this to the real and imaginary parts of an arbitrary element
of $C_0e$ gives $C_0e\subseteq Z_0e$.
The reverse inclusion is immediate.

The following is a von Neumann algebraic formulation of Maharam's
lemma; see \cite[331B]{FremlinIII}. For a formulation in terms of
conditional expectations on a probability space, see
\cite[Lemma~2]{Delbaen2020}.

We include a direct proof, adapting the small-piece and maximality
argument to projections and a faithful normal conditional expectation.
This establishes the statement for arbitrary commutative von Neumann
algebras, without assuming the existence of a faithful normal state.

\begin{lemma}[Relative Maharam lemma]
\label{lem:relative-maharam}
Let $Z_0\subseteq C_0$ be commutative von Neumann algebras with the same
unit, and let $E\colon C_0\to Z_0$ be a faithful normal conditional
expectation. Suppose that a projection $p\in C_0$ contains no nonzero
relative $Z_0$-atom. For every $h\in(Z_0)_+$ with
\[
        0\leq h\leq E(p),
\]
there is a projection $q\leq p$ such that $E(q)=h$.

In particular, for every nonzero projection $s\leq p$ and every
integer $n\geq2$, there are pairwise orthogonal projections
$s_1,\ldots,s_n\in C_0$ such that
\[
        s=\sum_{j=1}^n s_j,
        \qquad
        E(s_j)=\frac1nE(s)\quad(1\leq j\leq n).
\]
\end{lemma}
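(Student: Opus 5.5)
The plan is the classical small-piece plus maximality argument, carried out with projections and the faithful normal conditional expectation $E$ in place of measures. First I would prove a \emph{small-piece lemma}. If $s\le p$ is a nonzero projection in $C_0$ and $k\ge1$ is an integer, then there is a nonzero projection $s'\le s$ in $C_0$ and a nonzero projection $w\in Z_0$ with $s'\le w$ and
\[
  E(s')\le \tfrac1k\,w .
\]
This is where ``no relative atom'' enters. Since $s$ is not a relative atom, there is a projection $f\le s$ that is not of the form $we$. Iterating this kind of split, together with spectral projections of $E(f)$ (which lie in $Z_0$), lets me localise to a central piece where $E(f)$ and $E(s-f)$ are both bounded below. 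One of the two halves then has $E$-value at most half of $E(s)$ on a nonzero central piece. Repeating $\lceil\log_2 k\rceil$ times gives the required smallness. Here I use that $zs\neq0$ for central $z$ with $zE(s)\ne0$, by faithfulness of $E$.

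Second comes the maximality (exhaustion) step. Fix $h$ with $0\le h\le E(p)$. Consider the family of projections $q\le p$ with $E(q)\le h$, ordered by $\le$. I would apply Zorn's lemma to this family. Chains have suprema: the supremum of an increasing net of projections is its strong limit, and normality of $E$ gives $E(\sup q_\alpha)=\sup E(q_\alpha)\le h$. So there is a maximal $q$.

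Suppose, for contradiction, that $E(q)\ne h$. Then for some $\varepsilon>0$ the spectral projection $w=1_{[\varepsilon,\infty)}(h-E(q))\in Z_0$ is nonzero. Next I check that $w(p-q)\neq0$. Indeed $E(w(p-q))=w(E(p)-E(q))\ge w(h-E(q))\ge\varepsilon w$, which is nonzero.

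Apply the small-piece lemma to $s=w(p-q)$ with $k>1/\varepsilon$. This gives $0\ne s'\le s$ and a central $w'$ with $s'\le w'$ and $E(s')\le \varepsilon w'$. Multiplying by $w$ (note $s'\le w$), I may assume $w'\le w$. Hence $E(s')\le\varepsilon w'\le w'(h-E(q))$. Then
\[
  E(q+s') = E(q)+E(s')\le E(q)+w'(h-E(q)) \le h .
\]
So $q+s'$ lies in the family and strictly exceeds $q$, contradicting maximality. Hence $E(q)=h$.

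The ``in particular'' statement follows by induction. Every subprojection of $p$ also contains no relative atom, so the first part applies with $p$ replaced by $s$. Take $h=\frac1nE(s)$ to get $s_1\le s$. Then apply the first part to $s-s_1$ with $h=\frac1nE(s)\le E(s-s_1)=\frac{n-1}nE(s)$, and continue until $n-1$ pieces are chosen; the remainder is $s_n$.

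The main obstacle is the small-piece lemma: turning the purely algebraic failure $C_0e\ne Z_0e$ into a quantitative, uniformly small $E$-value on a nonzero central piece. My first attempt would be as follows. For $f\le s$ not of the form $ws$, set $g=E(f)$ and $g'=E(s-f)$. If the central support where both $g$ and $g'$ are $\ge\delta$ were zero for every $\delta>0$, then on the support of $E(s)$ the central projection $1_{\{g>0\}}$ would satisfy $f=1_{\{g>0\}}s$. This uses faithfulness: $E(f(1-w))=0$ forces $f(1-w)=0$, and similarly for $s-f$. That would contradict the choice of $f$.

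So for some $\delta$ there is a nonzero central $z$ on which both $g$ and $g'$ are $\ge\delta$ and at most $E(s)$. Among $zf$ and $z(s-f)$, a further spectral cut of $g-g'$ selects a nonzero central piece where one half has $E$-value at most half of $E(zs)$. Iterating gives the bound $E(s')\le 2^{-j}w$.
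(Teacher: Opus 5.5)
Your proposal is correct and follows essentially the same route as the paper: a small-piece lemma extracted from the absence of relative atoms, Zorn's lemma on $\{q\le p: E(q)\le h\}$ using normality of $E$, a contradiction obtained by adding a small piece under a central spectral cut, and the same induction for the equipartition. The only difference is in the halving step. You localise to a central piece on which $E(f)$ and $E(s-f)$ are bounded below and keep one half, which gives a bound by a central projection. The paper instead takes $t=wf+(1-w)(s-f)$ with $w=1_{(-\infty,0]}(E(f)-E(s-f))$; this $t$ is automatically nonzero and satisfies the global bound $E(t)\le\frac12E(s)$ directly.
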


\begin{proof}
\emph{Small subprojections.}
Let $0\neq s\leq p$ be a projection. Since $s$ is not a relative atom,
there is a projection $f\leq s$ which is not $ws$ for any projection
$w\in Z_0$. Put
\[
 a=E(f),\qquad b=E(s-f),\qquad
 w=1_{(-\infty,0]}(a-b).
\]
The element
\[
        t=wf+(1-w)(s-f)
\]
is a projection below $s$, and bimodularity of $E$ gives
\[
 E(t)=wa+(1-w)b\leq\tfrac12(a+b)=\tfrac12E(s).
\]
Moreover, $t\neq0$: otherwise $wf=0$ and $(1-w)(s-f)=0$, which would
give $f=(1-w)s$, contrary to the choice of $f$.

Every nonzero subprojection of $p$ is again not a relative atom.
Iteration therefore shows that, for every integer $k\geq1$ and every
nonzero projection $s\leq p$, there is a nonzero projection $t\leq s$
such that
\begin{equation}
        E(t)\leq2^{-k}E(s).
        \label{eq:sec5-small-piece}
\end{equation}

\smallskip
\noindent\emph{Prescribing the expectation.}
Fix $0\leq h\leq E(p)$, and order
\[
 \mathcal S_h=
 \{q\in\operatorname{Proj}(C_0):q\leq p,\ E(q)\leq h\}
\]
by inclusion. This set contains $0$. Every chain has an upper bound in
$\mathcal S_h$: take its supremum in the projection lattice and use
normality of $E$. Zorn's lemma gives a maximal element $q$.

Suppose that $\delta=h-E(q)$ is nonzero, and put
\[
        r=p-q,\qquad b=E(r).
\]
Then $0\leq\delta\leq b$. For some integer $k\geq1$, the spectral
projection
\[
        w=1_{(0,\infty)}(\delta-2^{-k}b)
\]
is nonzero. Indeed, if all these projections were zero, then
$\delta\leq2^{-k}b$ for every $k$, forcing $\delta=0$.
On this projection,
\[
        w\delta\geq2^{-k}wb.
\]
Also $wb\neq0$: since $0\leq\delta\leq b$, the positive spectral
projection $w$ is supported on the support of $b$. Hence
$E(wr)=wb\neq0$, and in particular $wr\neq0$.

Apply \eqref{eq:sec5-small-piece} to $wr$ with this same $k$. We obtain
a nonzero projection $t\leq wr$ satisfying
\[
        E(t)\leq2^{-k}E(wr)=2^{-k}wb\leq w\delta\leq\delta.
\]
Since $t\leq p-q$, the projection $q+t$ strictly contains $q$, while
\[
        E(q+t)\leq E(q)+\delta=h.
\]
This contradicts maximality. Therefore $E(q)=h$.

Finally, let $0\neq s\leq p$ and $n\geq2$. Apply the result successively
to choose $n-1$ orthogonal subprojections of $s$, each with expectation
$E(s)/n$, and take the remaining projection as the last piece.
At each stage the remainder has no relative atom and its expectation
dominates $E(s)/n$. The last piece also has expectation $E(s)/n$.
Faithfulness ensures that all pieces are nonzero.
\end{proof}

\begin{corollary}[Relative atoms in every nonzero corner]
\label{cor:sec5-relative-atoms}
Let $z\in Z$ be a nonzero central projection. Every nonzero projection
$p\in C_z$ contains a nonzero relative $Z_z$-atom.
\end{corollary}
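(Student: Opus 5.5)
The argument is by contradiction, combining the relative Maharam lemma (Lemma~\ref{lem:relative-maharam}) with Lemma~\ref{lem:no-relative-diffuse}. Fix a nonzero central projection $z\in Z$ and a nonzero projection $p\in C_z$. We work inside the corner: $Z_z\subseteq C_z$ are commutative von Neumann algebras with common unit $z$. The restriction $E_z=T_z|_{C_z}\colon C_z\to Z_z$ is a faithful normal conditional expectation, since $T$ is faithful, normal, $Z$-bimodular and restricts to the identity on $Z$, hence on $Z_z$. Suppose, for contradiction, that $p$ contains no nonzero relative $Z_z$-atom.

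The first step is to check that, under this assumption, $p$ is $T_z$-diffuse in the sense of Definition~\ref{def:sec5-relative-diffuse}. Take a nonzero projection $s\in C_z$ with $s\le p$ and an integer $n\ge2$. The second assertion of Lemma~\ref{lem:relative-maharam}, applied to the triple $Z_z\subseteq C_z$, $E_z$, gives pairwise orthogonal projections $s_1,\dots,s_n\in C_z$ with $s=\sum_j s_j$ and $E_z(s_j)=\frac1nE_z(s)$. Since $E_z$ is just $T_z$ on $C_z$, this is exactly the required splitting $T_z(s_j)=\frac1nT_z(s)$. Thus $p$ is a nonzero $T_z$-diffuse projection in $C_z$, which contradicts Lemma~\ref{lem:no-relative-diffuse}. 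Hence $p$ contains a nonzero relative $Z_z$-atom.

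One detail to verify is that the hypothesis of Lemma~\ref{lem:relative-maharam} is hereditary in the right sense. The lemma is stated for a projection $p$ with no nonzero relative atom beneath it. It then splits arbitrary $s\le p$, because a relative atom below $s$ would also lie below $p$. So "contains no nonzero relative atom" must be read as "no nonzero subprojection is a relative atom." That is exactly the negation of the corollary's conclusion, so the logic matches.

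The only genuinely delicate point is the identification of the data in the corner. One must confirm three things. First, $C_z=zC$ is a von Neumann algebra with unit $z$ containing $Z_z$. Second, $T_z$ maps $C_z$ into $Z_z$, since $T(za)=zT(a)$. Third, faithfulness and normality survive restriction to the corner. These are routine, so I expect no real obstacle beyond bookkeeping: the substance was already carried out in the two preceding lemmas.
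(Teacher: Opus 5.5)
Your proposal is correct and follows the paper's own proof. The paper likewise applies Lemma~\ref{lem:relative-maharam} to $T_z|_{C_z}\colon C_z\to Z_z$, observes that a projection $p$ with no nonzero relative $Z_z$-atom beneath it would then be $T_z$-diffuse, and concludes by contradicting Lemma~\ref{lem:no-relative-diffuse}.
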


\begin{proof}
Apply Lemma~\ref{lem:relative-maharam} to the expectation
$T_z|_{C_z}\colon C_z\to Z_z$. If $p$ contained no relative atom, it
would be $T_z$-diffuse, contrary to Lemma~\ref{lem:no-relative-diffuse}.
\end{proof}

\subsection{Reduced isotropy quotients from a relative atom}

The next step connects the relative atom to the groupoid. The identity
$Ce=Ze$ makes the corner $eMe$ commute with $D$. Its regular
representations therefore preserve the subspaces on which $D$ acts by
evaluation at a single unit. These subspaces identify with isotropy
regular representations.

\begin{lemma}[Faithful family of reduced isotropy-corner quotients]
\label{lem:relative-atom-isotropy-corners}
Let $0\neq e\in C$ be a relative $Z$-atom, and put $N=eMe$.
For every $u\in G^{(0)}$ and $x\in G\cdot u$, there are a projection
\[
        q_{u,x}\in\Cr(G_x^x,\Sigma|_{G_x^x})
\]
and a surjective $*$-homomorphism
\begin{equation}
 \rho_{u,x}\colon N\longrightarrow
 q_{u,x}\Cr(G_x^x,\Sigma|_{G_x^x})q_{u,x}.
 \label{eq:sec5-rho-map}
\end{equation}
Some of these projections may be zero, but the family is faithful:
\[
        \bigcap_{u\in G^{(0)}}\ \bigcap_{x\in G\cdot u}
        \ker\rho_{u,x}=\{0\}.
\]
\end{lemma}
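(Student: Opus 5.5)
The plan is to construct each $\rho_{u,x}$ as the compression of the source-regular representation $\lambda_u$ to a single range block of $\mathcal H_u$, after identifying that block with the regular representation of the isotropy group $G_x^x$. The relative-atom hypothesis is what makes this compression multiplicative on $N$.

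\emph{Step 1: $N$ commutes with $D$.} Let $d\in D$. Since $C$ is commutative, $de=ed$, and $de\in Ce=Ze$, so $de=te$ for some $t\in Z$. Then for $n=eae\in N$,
\[
dn=deae=teae=eaet=ea(te)=ea(de)=ea(ed)=nd.
\]
Hence $\Lambda(N)$ commutes with $\Lambda(D)$, and in particular each $\lambda_u(N)$ commutes with $\lambda_u(D)$.

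\emph{Step 2: the blocks.} Fix $u$. The diagonal acts on $\mathcal H_u$ by $(\lambda_u(d)\xi)_\gamma=d(r(\gamma))\xi_\gamma$. For $x\in G\cdot u$, let $P_x$ be the projection onto $\ell^2(G_u^x,L)$, where $G_u^x=\{\gamma\in G_u:r(\gamma)=x\}$. Suppose an operator $S$ commutes with $\lambda_u(D)$. Its matrix entries then satisfy $S_{\gamma\eta}\bigl(d(r\gamma)-d(r\eta)\bigr)=0$ for all $d\in D$. Since $D$ separates points, $S_{\gamma\eta}=0$ whenever $r\gamma\ne r\eta$, so $S$ commutes with every $P_x$. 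This entrywise argument avoids any question of whether singleton indicators lie in a weak closure of $D$. Consequently every $\lambda_u(n)$ with $n\in N$ is block diagonal, and $a\mapsto P_x\lambda_u(a)P_x$ is a $*$-homomorphism on $N$.

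\emph{Step 3: identification with isotropy.} Choose $\gamma_0\colon u\to x$ and a unit vector $v_0\in L_{\gamma_0}$. Every element of $G_u^x$ is uniquely $g\gamma_0$ with $g\in G_x^x$, and right multiplication by $v_0$ gives a unitary
\[
\ell^2(G_x^x,L)\cong\ell^2(G_u^x,L).
\]
For $f\in C_c(G,L)$ and $\gamma,\eta\in G_u^x$, we have $\gamma\eta^{-1}\in G_x^x$. Hence, under this unitary, $P_x\lambda_u(f)P_x$ becomes the isotropy regular operator $\lambda_x(f|_{G_x^x})$. Because $G_x^x$ is closed and discrete and $f$ has compact support, $f|_{G_x^x}$ is finitely supported. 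This defines a contractive linear map
\[
\varphi_{u,x}\colon M\longrightarrow\Cr(G_x^x,\Sigma|_{G_x^x}).
\]
Its range contains all finitely supported sections, since any value at an isotropy arrow extends to $G$ by a bisection cutoff.

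Put $q_{u,x}=\varphi_{u,x}(e)$ and $\rho_{u,x}=\varphi_{u,x}|_N$. By Step 2, $q_{u,x}$ is a projection and $\rho_{u,x}$ is a $*$-homomorphism. Since $\lambda_u(e)$ commutes with $P_x$,
\[
\rho_{u,x}(eae)=q_{u,x}\,\varphi_{u,x}(a)\,q_{u,x}.
\]
Thus the range of $\rho_{u,x}$ lies in the corner and contains $q_{u,x}\,\mathcal A\,q_{u,x}$, where $\mathcal A$ is the dense subalgebra of finitely supported sections. The range of a $*$-homomorphism between $C^*$-algebras is closed, so $\rho_{u,x}$ is onto the corner. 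This is the step I expect to need the most care: $\varphi_{u,x}$ itself is not multiplicative on $M$ and need not have closed range, so surjectivity has to come from the closed range of $\rho_{u,x}$ together with density, not from $\varphi_{u,x}$ directly.

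\emph{Step 4: faithfulness.} Suppose $\rho_{u,x}(n)=0$ for all $u$ and all $x\in G\cdot u$. Then $\lambda_u(n)$ vanishes on every block $P_x\mathcal H_u$. Since $\lambda_u(n)$ is block diagonal and the blocks sum to $\mathcal H_u$, it follows that $\lambda_u(n)=0$ for every $u$. Faithfulness of $\Lambda$ then gives $n=0$.
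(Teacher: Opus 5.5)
Your proposal is correct and follows the paper's proof essentially step for step: $N\subseteq D'$ from $Ce=Ze$ and centrality, block-diagonality of $\lambda_u(N)$ over the range blocks of $\mathcal H_u$, transport of each block to the isotropy regular representation by an arrow $u\to x$, surjectivity from the closed range of the $*$-homomorphism $\rho_{u,x}$ together with density of $q_{u,x}\,C_c(G_x^x,L)\,q_{u,x}$, and faithfulness from that of $\Lambda$. The only difference is cosmetic. You kill the off-diagonal blocks by an entrywise argument using that $D$ separates points, whereas the paper picks $d\in D$ with $d(x)=0$ and $d(y)=1$; the two arguments are equivalent.
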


\begin{proof}
\emph{Commutation with the diagonal.}
For $d\in D$, the identity $Ce=Ze$ gives $de=t_de$ for some $t_d\in Z$.
Since $e$ commutes with $d$, every $b=eae\in N$ satisfies
\[
        db=t_db=bd.
\]
Thus $N\subseteq D'\cap M$.

Fix $u\in G^{(0)}$. Decompose the source-regular Hilbert space by range:
\begin{equation}
 \mathcal H_u=\ell^2(G_u,L)
 =\bigoplus_{x\in G\cdot u}\mathcal H_{u,x},
 \qquad \mathcal H_{u,x}=\ell^2(G_u^x,L),
 \label{eq:sec5-range-blocks}
\end{equation}
where $G_u^x=\{\gamma\in G:s(\gamma)=u,\ r(\gamma)=x\}$.
Let $P_{u,x}$ be the projection onto $\mathcal H_{u,x}$.
On this subspace, $\lambda_u(d)$ acts as $d(x)I$.
For $x\neq y$, choose $d\in D$ with $d(x)=0$ and $d(y)=1$.
For $b\in N$, commutation with $d$ gives
\[
 P_{u,y}\lambda_u(b)P_{u,x}
 =P_{u,y}\lambda_u(d)\lambda_u(b)P_{u,x}
 =P_{u,y}\lambda_u(b)\lambda_u(d)P_{u,x}=0.
\]
It follows that every range block reduces $\lambda_u(N)$.

\smallskip
\noindent\emph{Identification of the compressed operators.}
Fix $x\in G\cdot u$, choose an arrow $\eta\colon u\to x$, and put
$\Gamma_x=G_x^x$. The map $g\mapsto g\eta$ is a bijection from
$\Gamma_x$ onto $G_u^x$. Choose a unit vector $v\in L_\eta$.
Multiplication by $v$ gives a unitary
\[
 W_{u,x}\colon\ell^2(\Gamma_x,L|_{\Gamma_x})
       \longrightarrow\mathcal H_{u,x},
 \qquad (W_{u,x}\xi)_{g\eta}=\xi_gv.
\]
For $f\in\Cc(G,L)$, the regular representation formula gives
\begin{equation}
 W_{u,x}^*P_{u,x}\lambda_u(f)P_{u,x}W_{u,x}
 =\lambda_{\Gamma_x}(f|_{\Gamma_x}).
 \label{eq:sec5-block-is-isotropy}
\end{equation}
Indeed, the coefficient indexed by $g\eta,h\eta$ uses
\[
        (g\eta)(h\eta)^{-1}=gh^{-1},
\]
and the line-bundle transports by $v$ give precisely the isotropy
convolution formula. The restriction $f|_{\Gamma_x}$ has finite support
because $\Gamma_x$ is closed and discrete in $G$.

Compression is contractive. Identifying the isotropy algebra with its
faithful regular image, \eqref{eq:sec5-block-is-isotropy} therefore
extends to a bounded linear map
\[
 R_{u,x}\colon M\longrightarrow\Cr(\Gamma_x,\Sigma|_{\Gamma_x}).
\]
Its range contains $\Cc(\Gamma_x,L|_{\Gamma_x})$. To see this, extend
each of the finitely many values of an isotropy section using a
trivializing open bisection and a compactly supported cutoff function.
Each such bisection meets $\Gamma_x$ in at most one arrow. Summing these
extensions gives the required section of $\Cc(G,L)$.

\smallskip
\noindent\emph{The corner maps and their faithfulness.}
Set $q_{u,x}=R_{u,x}(e)$. Since the range block reduces
$\lambda_u(N)$ and $e$ is the unit of $N$, this is a projection and
\[
        \rho_{u,x}:=R_{u,x}|_N
\]
is a $*$-homomorphism into the indicated isotropy corner. Moreover,
$P_{u,x}$ commutes with $\lambda_u(e)$, so for every $a\in M$,
\begin{equation}
        \rho_{u,x}(eae)=q_{u,x}R_{u,x}(a)q_{u,x}.
        \label{eq:sec5-corner-compression}
\end{equation}
The extension argument above shows that the range of $\rho_{u,x}$
contains
\[
        q_{u,x}\Cc(\Gamma_x,L|_{\Gamma_x})q_{u,x},
\]
which is dense in $q_{u,x}\Cr(\Gamma_x,\Sigma|_{\Gamma_x})q_{u,x}$.
The range of a $*$-homomorphism between $C^*$-algebras is norm closed,
so $\rho_{u,x}$ is surjective.

Finally, if $b\in N$ belongs to every kernel, then every range block of
$\lambda_u(b)$ vanishes, for every $u$. By
\eqref{eq:sec5-range-blocks} and block diagonality, $\lambda_u(b)=0$
for all $u$. Faithfulness of the family of source-regular
representations implies $b=0$.
\end{proof}

\begin{remark}
The maps $R_{u,x}$ need not be multiplicative on $M$; multiplicativity
holds on $N$ because its operators preserve the range blocks.
All these assertions follow from algebraic identities and norm
approximation. No normality of the regular representations, and no
exactness of a restriction sequence, is required.
\end{remark}

\subsection{Finite-dimensional representations of type II algebras}

\begin{lemma}
\label{lem:type-II-no-finite-representations}
Let $N$ be a nonzero finite type~II von Neumann algebra. Every
$*$-homomorphism from $N$ to a finite-dimensional $C^*$-algebra is zero.
\end{lemma}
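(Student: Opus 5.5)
It suffices to treat a $*$-homomorphism $\phi\colon N\to B$ with $B$ finite-dimensional, since a nonzero map into a finite-dimensional $C^*$-algebra composed with a suitable block projection $B\cong\bigoplus_k M_{n_k}(\C)\to M_{n_k}(\C)$ stays nonzero. So we may assume $B=M_n(\C)$ and aim to show $\phi=0$. No normality of $\phi$ is assumed, so the argument must use only algebraic identities and norm estimates, in keeping with the rest of the paper.

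The key input is the halving property of type~II algebras. Since $N$ is type~II and finite, with center-valued trace $T_N$, every projection $p\in N$ can be written as $p=p_1+p_2$ with $p_1\perp p_2$ and $p_1\sim p_2$. We get this from the comparison theorem \eqref{eq:sec5-comparison} applied in $N$: use the absence of abelian projections together with the standard halving lemma for type~II algebras in \cite[Chapter~V]{TakesakiI}. Iterating from $1_N$, we obtain for every $k\ge1$ pairwise orthogonal, mutually equivalent projections $e_1,\ldots,e_{2^k}$ with sum $1_N$, and partial isometries $v_j\in N$ with $v_j^*v_j=e_1$ and $v_jv_j^*=e_j$. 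These form a system of $2^k\times2^k$ matrix units, so they give a unital $*$-homomorphism $M_{2^k}(\C)\to N$.

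Now choose $k$ with $2^k>n$. Composing with $\phi$ gives a $*$-homomorphism $M_{2^k}(\C)\to M_n(\C)$. Because $M_{2^k}(\C)$ is simple, this map is either injective or zero. Injectivity is impossible: the images $\phi(e_j)$ would be $2^k$ pairwise orthogonal nonzero projections in $M_n(\C)$, which forces $2^k\le n$. Hence $\phi(e_j)=0$ for all $j$, and therefore $\phi(1_N)=\sum_j\phi(e_j)=0$. Since $\phi(x)=\phi(1_N)\phi(x)$ for every $x\in N$, we conclude $\phi=0$.

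The main point to secure is the halving step in a possibly non-$\sigma$-finite finite type~II algebra. My first approach is to cite it from \cite[Chapter~V]{TakesakiI}: a type~II algebra has no nonzero abelian projections, and the standard halving lemma says every projection is the sum of two orthogonal equivalent subprojections. If a self-contained route is wanted instead, one can take a maximal family of pairs of orthogonal equivalent subprojections via Zorn's lemma, and use the comparison theorem together with the absence of abelian subprojections to show the remainder is zero. Everything after that is purely algebraic, and it does not need normality of $\phi$.
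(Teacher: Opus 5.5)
Your proposal is correct and is essentially the paper's argument: you split $1_N$ into more than $n$ mutually equivalent orthogonal projections and observe that $M_n(\C)$ cannot contain that many nonzero orthogonal projections, with no normality of the map needed. The differences are cosmetic. You iterate the halving lemma to get $2^k>n$ pieces and invoke simplicity of $M_{2^k}(\C)$. The paper instead takes $n+1$ pieces of center-valued trace $1_N/(n+1)$ directly and notes that if one image vanishes, all of them do.
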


\begin{proof}
A finite-dimensional $C^*$-algebra embeds in some $M_n(\mathbb C)$,
so suppose that $\theta\colon N\to M_n(\mathbb C)$ is nonzero.
Then $\theta(1_N)$ is a nonzero projection.

The continuous-dimension theorem for finite type~II von Neumann
algebras gives a decomposition
\begin{equation}
        1_N=p_1+\cdots+p_{n+1}
        \label{eq:sec5-typeII-equal-projections}
\end{equation}
into mutually equivalent orthogonal projections; see
\cite[Chapter~V]{TakesakiI}. Equivalently, the pieces may be chosen
with normalized center-valued trace $1_N/(n+1)$, and comparison gives
their equivalence.

The projections $\theta(p_j)$ are orthogonal and mutually equivalent.
If one were zero, all would be zero, contradicting
$\theta(1_N)\neq0$. Thus $M_n(\mathbb C)$ would contain $n+1$ nonzero
orthogonal projections, which is impossible. This uses only the finite
algebraic decomposition \eqref{eq:sec5-typeII-equal-projections};
normality of $\theta$ is not needed.
\end{proof}

\subsection{Excluding the type II central summand}

\begin{proposition}[Finite groupoid von Neumann algebras are type I]
\label{prop:no-finite-type-II-summand}
If $M=\Cr(G,\Sigma)$ is a finite von Neumann algebra, then $M$ is type~I.
\end{proposition}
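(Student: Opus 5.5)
We argue by contradiction. Since $M$ is finite, the type decomposition gives a central projection $z\in Z$ with $zM$ of type~II$_1$ and $(1-z)M$ of type~I; it suffices to show $z=0$. Suppose $z\neq0$. We work inside $C=W^*(Z\cup D)$ and apply Corollary~\ref{cor:sec5-relative-atoms} to the nonzero projection $z\in C_z$. This yields a nonzero relative $Z_z$-atom $e\le z$, so $C_ze=Z_ze$.

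The first step checks that $e$ is also a relative $Z$-atom in $C$, so that Lemma~\ref{lem:relative-atom-isotropy-corners} applies. For $c\in C$ we have $ce=(cz)e\in C_ze=Z_ze\subseteq Ze$, since $e=ze$. Put $N=eMe$. Because $e\le z$ is a nonzero projection in the type~II$_1$ algebra $zM$, the corner $N$ is a nonzero finite type~II von Neumann algebra, and in particular it is infinite-dimensional.

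Lemma~\ref{lem:relative-atom-isotropy-corners} provides surjective $*$-homomorphisms
\[
   \rho_{u,x}\colon N\to q_{u,x}\Cr(G_x^x,\Sigma|_{G_x^x})q_{u,x}
\]
forming a faithful family. Each target is a corner $qM_1(\Cr(\Gamma,\omega))q$ of a reduced twisted discrete group algebra, where $\Gamma=G_x^x$ carries the restricted twist. We identify $\Cr(G_x^x,\Sigma|_{G_x^x})$ with $\Cr(\Gamma,\omega)$ by choosing unit vectors $w_g\in L_g$, exactly as in Step~3 of Lemma~\ref{lem:finite-orbit-form}. Since $N$ is a von Neumann algebra and $\rho_{u,x}$ is a bounded surjective linear map, Lemma~\ref{lem:vn-quotient-rigidity} with $k=1$ shows that every target is finite-dimensional. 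Lemma~\ref{lem:type-II-no-finite-representations} then forces every $\rho_{u,x}$ to be zero. Faithfulness of the family gives $N=0$, so $e=0$, a contradiction. Hence $z=0$ and $M$ is type~I.

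The main obstacle is the bookkeeping. We must confirm that the isotropy algebra in Lemma~\ref{lem:relative-atom-isotropy-corners}, defined through its regular representation on $\ell^2(\Gamma_x,L)$, is isomorphic to $\Cr(\Gamma,\omega)$ with the convention used in the rigidity lemma. This is a unitary identification of $\ell^2(\Gamma_x,L)$ with $\ell^2(\Gamma)$ sending the basis vectors $w_g$ to $\delta_g$. We must also confirm that the corner $eMe$ of a type~II$_1$ summand is again type~II; this is standard, since corners $eMe$ of type~II algebras are type~II for any nonzero projection $e$. Everything else is a direct assembly of Corollary~\ref{cor:sec5-relative-atoms}, Lemma~\ref{lem:relative-atom-isotropy-corners}, Lemma~\ref{lem:vn-quotient-rigidity}, and Lemma~\ref{lem:type-II-no-finite-representations}.
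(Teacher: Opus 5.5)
Your proposal is correct and follows essentially the same route as the paper: a relative $Z_z$-atom $e\le z$ from Corollary~\ref{cor:sec5-relative-atoms}, the check $Ce=Ze$, the type~II corner $N=eMe$, the faithful family of surjective isotropy-corner maps, quotient rigidity with $k=1$, and Lemma~\ref{lem:type-II-no-finite-representations}. The only difference is cosmetic: you show that every $\rho_{u,x}$ vanishes and then use faithfulness to get $N=0$, whereas the paper picks one nonzero $\rho_{u,x}$ and derives the contradiction from it.
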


\begin{proof}
Suppose that there is a nonzero central projection $z\in Z$ such that
$zM$ is type~II. By Corollary~\ref{cor:sec5-relative-atoms}, applied to
$p=z$, there is a nonzero relative $Z_z$-atom $e\in C_z$.
Since $e\leq z$,
\[
        Ce=C_ze=Z_ze=Ze.
\]

The corner $N=eMe$ is a nonzero finite type~II von Neumann algebra.
Indeed, finiteness passes to corners. If $N$ had a nonzero type~I
central summand, it would contain a nonzero abelian projection
$p\leq e$; see \cite[Chapter~V]{TakesakiI}. But then
\[
        p(zM)p=pMp=pNp
\]
would be abelian, contradicting the fact that the type~II algebra
$zM$ has no nonzero abelian projections.

Lemma~\ref{lem:relative-atom-isotropy-corners} supplies a faithful
family of surjective maps from $N$ onto reduced isotropy corners.
Since $N\neq0$, at least one of these maps is nonzero. Thus, for some
$x\in G^{(0)}$ and some nonzero projection $q$, there is a surjective
$*$-homomorphism
\[
        \rho\colon N\longrightarrow
        q\Cr(G_x^x,\Sigma|_{G_x^x})q\neq0.
\]

Choose unit vectors in the fibres of $L|_{G_x^x}$, taking the canonical
unit vector at the identity. Their multiplication determines a
normalized scalar cocycle $\omega_x$, identifying the target with
\[
        q\Cr(G_x^x,\omega_x)q.
\]
Lemma~\ref{lem:vn-quotient-rigidity}, applied to $\rho$ with matrix
size $k=1$, shows that this corner is finite-dimensional. This
contradicts Lemma~\ref{lem:type-II-no-finite-representations}, because
$N$ is type~II and $\rho\neq0$.

Therefore the type~II central part of $M$ is zero. Since $M$ is finite,
only its finite type~I part remains.
\end{proof}

\section{Proof of the classification theorem}
\label{sec:classification}

We now combine the preceding results to prove
Theorem~\ref{thm:main}. Recall that a $C^*$-algebra is
\emph{subhomogeneous of degree at most $N$} if every irreducible
representation has dimension at most $N$. These representations are
not assumed to be normal when the algebra is a von Neumann algebra.

We first establish the equivalence of the two algebraic descriptions.
We then construct the groupoid model and prove the converse using the
obstructions obtained in the preceding sections.

\begin{proof}[Proof of Theorem~\ref{thm:main}]
If $M=0$, take $N=1$, $A_1=0$, and the empty groupoid with its trivial
twist. Henceforth assume that $M\neq0$.

\smallskip
\noindent\emph{\textup{(2)} $\Longrightarrow$ \textup{(3)}.}
Suppose that every irreducible representation of $M$ has dimension at
most $N$.

We first observe that $M$ cannot contain a copy of $M_{N+1}(\C)$,
even as a nonunital subalgebra. Indeed, suppose that
$(e_{ij})_{i,j=1}^{N+1}$ were a nonzero system of matrix units in $M$.
Since irreducible representations of a $C^*$-algebra separate points,
there would be an irreducible representation $\pi$ of $M$ with
$\pi(e_{11})\neq0$. The matrix-unit identities would then make
\[
        \pi(e_{11}),\ldots,\pi(e_{N+1,N+1})
\]
nonzero mutually orthogonal projections: the partial isometries
$\pi(e_{j1})$ implement their equivalence. Thus the representation
space of $\pi$ would have dimension at least $N+1$, a contradiction.

It follows that $M$ is finite. Otherwise,
Theorem~\ref{thm:nonfinite-uncontrolled} would give a copy of
$B(\ell^2(\mathbb N))$ in $M$, and hence a copy of $M_{N+1}(\C)$.

The type~II central part of $M$ is also zero. To see this, let
$z_{\mathrm{II}}\in Z(M)$ be the central projection supporting that
part. For every irreducible representation $\pi$ of $M$, the
restriction
\[
        \pi|_{z_{\mathrm{II}}M}
\]
is a representation into a finite-dimensional matrix algebra.
Lemma~\ref{lem:type-II-no-finite-representations} makes this
restriction zero. Hence $\pi(z_{\mathrm{II}})=0$ for every
irreducible $\pi$, and separation of points gives
$z_{\mathrm{II}}=0$.

Thus $M$ is a finite type~I von Neumann algebra. Its homogeneous
central decomposition provides mutually orthogonal central
projections $(z_n)_{n\geq1}$ such that
\[
 \sum_{n\geq1}^{\mathrm{sot}}z_n=1,
 \qquad
 z_nM\cong Z(z_nM)\mathbin{\bar\otimes}M_n(\C)
 \quad\text{when }z_n\neq0;
\]
see \cite[Chapter~V]{TakesakiI}.
If $n>N$ and $z_n\neq0$, the constant matrix factor in this summand
would contain a nonunital copy of $M_{N+1}(\C)$, contrary to the
observation above. Therefore $z_n=0$ for $n>N$.

Set $A_n=Z(z_nM)$ when $z_n\neq0$, and set $A_n=0$ otherwise.
Only finitely many summands remain, so
\begin{equation}
        M\cong
        \prod_{n=1}^N
        \bigl(A_n\mathbin{\bar\otimes}M_n(\C)\bigr).
        \label{eq:sec6-subhomogeneous-structure}
\end{equation}
This is condition \textup{(3)}. Notice that this argument used only
general facts about von Neumann algebras, without a groupoid
realization.

\smallskip
\noindent\emph{\textup{(3)} $\Longrightarrow$ \textup{(2)}.}
Suppose that $M$ has the product decomposition
\eqref{eq:sec6-subhomogeneous-structure}.
In an irreducible representation of a finite product, the central
coordinate projections act as either zero or the identity.
Exactly one acts as the identity, because their sum is $1$.
Thus every irreducible representation factors through one nonzero
coordinate
\[
        A_n\mathbin{\bar\otimes}M_n(\C).
\]
By Lemma~\ref{lem:sec4-constant-matrix-lift}, every irreducible
representation of this homogeneous summand has dimension $n$.
Consequently, every irreducible representation of $M$ has dimension
at most $N$, proving \textup{(2)}.

\smallskip
\noindent\emph{\textup{(3)} $\Longrightarrow$ \textup{(1)}.}
We construct the groupoid explicitly. Discard the zero factors in
\eqref{eq:sec6-subhomogeneous-structure}. For each remaining $n$,
let $X_n$ be the Gelfand spectrum of the unital abelian
$C^*$-algebra $A_n$. Then $X_n$ is compact Hausdorff and
\[
        A_n\cong C(X_n)
\]
as $C^*$-algebras.

Let $I_n=\{1,\ldots,n\}$ with the discrete topology, and define
\begin{equation}
        G_n=X_n\times I_n\times I_n.
        \label{eq:sec6-pair-groupoid}
\end{equation}
Identify its unit space with $X_n\times I_n$ through
$(x,i)\leftrightarrow(x,i,i)$, and use the structure maps
\begin{align*}
 s(x,i,j)&=(x,j),
 & r(x,i,j)&=(x,i),\\
 (x,i,j)(x,j,k)&=(x,i,k),
 & (x,i,j)^{-1}&=(x,j,i).
\end{align*}
Thus $G_n$ is the pair groupoid on $I_n$ over each point of $X_n$.

The groupoid $G_n$ is compact and Hausdorff. It is \'etale because,
for each open $U\subseteq X_n$, the set
\[
        U\times\{i\}\times\{j\}
\]
is an open bisection. It is principal because its only isotropy
arrows are the units $(x,i,i)$. Equip it with the trivial twist
$G_n\times\mathbb T$.

Since $G_n$ is compact, its convolution algebra has underlying
vector space $C(G_n)$. Define
\begin{equation}
 \Phi_n\colon C_c(G_n)\longrightarrow C(X_n,M_n(\C)),
 \qquad
 \Phi_n(f)(x)=\bigl[f(x,i,j)\bigr]_{i,j=1}^n.
 \label{eq:sec6-Phi-pair}
\end{equation}
The formulas
\begin{align*}
 (f*g)(x,i,k)
     &=\sum_{j=1}^n f(x,i,j)g(x,j,k),\\
 f^*(x,i,j)
     &=\overline{f(x,j,i)}
\end{align*}
show that $\Phi_n$ is an algebraic $*$-isomorphism.

To compute the reduced norm, fix a unit $(x,j)$. Its source fibre is
\[
        (G_n)_{(x,j)}
        =\{(x,i,j):1\leq i\leq n\}.
\]
Under the corresponding identification of its regular Hilbert
space with $\C^n$, the source-regular representation of $f$ is
exactly the matrix $\Phi_n(f)(x)$. Therefore
\begin{equation}
        \|f\|_{C_r^*(G_n)}
        =\sup_{x\in X_n}\|\Phi_n(f)(x)\|.
        \label{eq:sec6-pair-reduced-norm}
\end{equation}
It follows that $\Phi_n$ extends to an isomorphism
\begin{equation}
 \begin{split}
 C_r^*(G_n)
 &\cong C(X_n,M_n(\C))\\
 &\cong M_n(A_n)
 \cong A_n\mathbin{\bar\otimes}M_n(\C).
 \end{split}
 \label{eq:sec6-pair-algebra}
\end{equation}
Here the $C^*$-algebra and von Neumann algebra matrix
amplifications coincide: both consist of $n\times n$ matrices
with entries in $A_n$.

Finally, take the finite disjoint union
\begin{equation}
        G=
        \bigsqcup_{\substack{1\leq n\leq N\\A_n\neq0}}G_n
        \label{eq:sec6-disjoint-union}
\end{equation}
and the trivial twist $\Sigma=G\times\mathbb T$.
The groupoid $G$ is compact, Hausdorff, \'etale, and principal.
Convolution acts separately on its components, and the reduced
norm is the maximum of their reduced norms. Hence
\[
 \begin{split}
 C_r^*(G,\Sigma)
 &\cong
 \bigoplus_{\substack{1\leq n\leq N\\A_n\neq0}}C_r^*(G_n)\\
 &\cong
 \prod_{n=1}^N
 \bigl(A_n\mathbin{\bar\otimes}M_n(\C)\bigr)
 \cong M.
 \end{split}
\]
The direct sum and direct product agree because the index set is
finite. This proves \textup{(1)}, including the additional assertion
about a compact principal groupoid and a trivial twist.

\smallskip
\noindent\emph{\textup{(1)} $\Longrightarrow$ \textup{(3)}.}
Suppose that $M$ has a groupoid realization as in \textup{(1)}.
Identify
\[
        M=\Cr(G,\Sigma).
\]
Corollary~\ref{cor:no-properly-infinite-summand} gives directly
\begin{equation}
        M\text{ is finite}.
        \label{eq:sec6-M-finite}
\end{equation}
Proposition~\ref{prop:no-finite-type-II-summand} then shows that
$M$ is type~I. Finally,
Proposition~\ref{prop:bounded-type-I-degree} provides an integer
$N\geq1$ such that every nonzero finite homogeneous central
summand of $M$ has degree at most $N$.

Since $M$ is finite, its homogeneous type~I decomposition contains
only finite degrees. The preceding bound therefore leaves only
the summands of types $I_1,\ldots,I_N$. Applying the homogeneous
structure theorem once more gives
\[
        M\cong
        \prod_{n=1}^N
        \bigl(A_n\mathbin{\bar\otimes}M_n(\C)\bigr),
\]
where $A_n=Z(z_nM)$ for each nonzero summand and $A_n=0$
otherwise. This proves \textup{(3)}.

Together with the other implications, this establishes the
equivalence of all three conditions.
\end{proof}

\appendix

\section{Unitality and the Haar-system setting}
\label{sec:unitality-haar}

In this appendix, $G$ is a locally compact Hausdorff groupoid
with a continuous Haar system $\lambda=(\lambda^u)_{u\in G^{(0)}}$
of full support. Let $\Sigma$ be a twist over $G$, with associated
Fell line bundle $L\to G$. The notation $C^*(G,\Sigma;\lambda)$ and
$C_r^*(G,\Sigma;\lambda)$ records the Haar system used in the convolution
and the completions. No countability assumption is imposed.

Let $\lambda_u$ be the image of $\lambda^u$ under inversion, regarded
as a measure on $G_u=s^{-1}(u)$. The source-regular representation
acts on $\mathcal H_u=L^2(G_u,L,\lambda_u)$ by
\[
 (\pi_u(f)\xi)(\gamma)
 =\int_{G_u}f(\gamma\eta^{-1})\xi(\eta)\,d\lambda_u(\eta).
\]
These representations are nondegenerate and define the reduced norm.

\begin{theorem}[Unitality criterion]
\label{thm:unitality-haar}
The following conditions are equivalent:
\begin{enumerate}
\item $C_r^*(G,\Sigma;\lambda)$ is unital;
\item $C^*(G,\Sigma;\lambda)$ is unital;
\item $G$ is \'etale and $G^{(0)}$ is compact.
\end{enumerate}
\end{theorem}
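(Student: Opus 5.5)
The plan is to prove (3) $\Rightarrow$ (2) $\Rightarrow$ (1) $\Rightarrow$ (3). The first two implications are routine; all the work lies in (1) $\Rightarrow$ (3).

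\emph{(3) $\Rightarrow$ (2).} Assume $G$ is \'etale with compact unit space. A continuous Haar system of full support on an \'etale groupoid is given by $\lambda^u=\sum_{\gamma\in G^u}c(\gamma)\delta_\gamma$ for a continuous positive function $c$. Then the characteristic function of $G^{(0)}$, rescaled by $1/c$ on $G^{(0)}$, lies in $C_c(G,L)$ because $G^{(0)}$ is clopen and compact. This element is a two-sided identity for the twisted convolution. Its image in $C^*(G,\Sigma;\lambda)$ is then a unit, since $C_c$ is dense.

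\emph{(2) $\Rightarrow$ (1).} The regular quotient $C^*\to C_r^*$ is surjective, so it carries the unit to a unit.

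\emph{(1) $\Rightarrow$ (3).} Let $1\in C_r^*(G,\Sigma;\lambda)$ be the unit. I will pass to the regular representations. Each $\pi_u$ is nondegenerate, so $\pi_u(1)=I$ on $\mathcal H_u=L^2(G_u,L,\lambda_u)$. Choose $f\in C_c(G,L)$ with $\|f-1\|_r<1/2$.

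\textbf{Compactness of $G^{(0)}$.} Let $K=r(\operatorname{supp} f)\cup s(\operatorname{supp}f)$, which is compact. If $u\notin K$, then $f(\gamma\eta^{-1})=0$ for all $\gamma,\eta\in G_u$, because $\gamma\eta^{-1}$ has range $r(\gamma)$, which lies in the orbit of $u$. I have to be careful here, since the orbit of $u$ may meet $K$. A cleaner route is to use the diagonal $C_0(G^{(0)})$, which acts by multipliers: take $d\in C_c(G^{(0)})$ with $0\le d\le1$ and $d=1$ on $r(\operatorname{supp}f)$. Then $d\cdot f=f$, so $\|d\cdot 1 - 1\|\le \|d\cdot(1-f)\|+\|f-1\|<1$. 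For $u\notin\operatorname{supp}d$, I compress $\pi_u$ by multiplication with $1-d\circ r$, which acts as a projection-like multiplier on $\mathcal H_u$. If $G^{(0)}$ is noncompact, choose $e\in C_c(G^{(0)})$ with $0\le e\le 1$, disjoint support from $d$, and $\|e\|=1$. Then $e=e\cdot 1$ while $e\cdot d=0$, so $\|e\|=\|e(1-d\cdot 1)\|<1$, a contradiction. This argument only uses that $C_0(G^{(0)})$ multiplies $C_r^*$ faithfully in norm, which follows from the regular representations because $e$ acts as the multiplication operator by $e\circ r$ with norm $\sup|e|=1$ under full support.

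\textbf{\'Etaleness.} This is the main obstacle. Fix $u$. On $\mathcal H_u$, $\pi_u(f)$ is an integral operator with kernel $k(\gamma,\eta)=f(\gamma\eta^{-1})$, continuous and compactly supported on compacts, and it lies within $1/2$ of the identity. The plan is to show that $\lambda_u$ has an atom at $u$ and that $u$ is isolated in $G_u$. If $\lambda_u(\{u\})=0$, take unit vectors $\xi_n$ supported in shrinking neighbourhoods $V_n$ of $u$ in $G_u$, each with positive measure by full support. Cauchy--Schwarz gives $|\langle\pi_u(f)\xi_n,\xi_n\rangle|\le \|k\|_\infty\lambda_u(V_n)\to0$, while $\langle\xi_n,\xi_n\rangle=1$, which contradicts $\|\pi_u(f)-I\|<1/2$. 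So $\lambda_u(\{u\})>0$ for every $u$, and by equivariance every point of every fibre is an atom. A Radon measure with full support in which every point is an atom must have discrete support: taking $\xi=\mathbf 1_{V}$ for small $V\ni u$ and using continuity of $k$, the same estimate shows $\lambda_u(V\setminus\{u\})$ is eventually zero. So $\{u\}$ is open in $G_u$ and the fibres are discrete.

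Next, continuity of $u\mapsto\lambda^u(\{u\})$ has to be upgraded to openness of $G^{(0)}$ in $G$. I would argue via continuity of the Haar system, applied to $g\in C_c(G)$ that is positive near $u$: if $G^{(0)}$ were not open, there would be non-unit arrows accumulating at units. Using the uniform lower bound on the atom masses, which comes from the uniform estimate $\|\pi_u(f)-I\|<1/2$ across $u$, together with upper semicontinuity of these masses, I would derive a contradiction. Once $G^{(0)}$ is open, with discrete fibres, I would use the standard fact that a groupoid with a Haar system is \'etale exactly when its unit space is open.
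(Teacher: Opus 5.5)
\textbf{Gap: openness of $G^{(0)}$.} This step of (1)$\Rightarrow$(3) has a genuine gap. You start from ``continuity of $u\mapsto\lambda^u(\{u\})$'', but nothing you have established gives this. In the paper, continuity of $m(u)=\lambda^u(\{u\})$ is proved only under (3), using that $G^{(0)}$ is already open. Moreover, the ingredients you list cannot by themselves yield a contradiction: continuity of the Haar system, discrete fibres, a uniform lower bound on the atoms, and upper semicontinuity of $m$. Here is an example. Take $G=\{(t,t):0\le t\le1\}\cup\{(t,-t):0<t\le1\}\subseteq\mathbb R^2$. View it as a group bundle over $[0,1]$, with fibre $\mathbb Z/2$ over each $t>0$ and trivial fibre over $0$. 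Put $\lambda^t=\delta_{(t,t)}+\delta_{(t,-t)}$ for $t>0$ and $\lambda^0=2\delta_{(0,0)}$. This is a continuous Haar system of full support with finite fibres. The function $m\ge1$ is upper semicontinuous; it jumps from $1$ to $2$ at $0$. Yet the non-units $(t,-t)$ accumulate at the unit $(0,0)$. The mass of the approaching arrows is simply absorbed into the atom at $(0,0)$, so testing the Haar system against functions $g$ positive near a unit detects nothing wrong.

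\textbf{Missing idea.} What excludes this is the approximant $f$ itself, evaluated on non-units. For $\gamma\notin G^{(0)}$, the normalized delta vectors at $s(\gamma)$ and at $\gamma$ in $\mathcal H_{s(\gamma)}$ are orthogonal. The corresponding coefficient of $\pi_{s(\gamma)}(f)-I$ therefore gives $\sqrt{m(s(\gamma))m(r(\gamma))}\,\|f(\gamma)\|\le\varepsilon$. The diagonal coefficient gives $|m(u)f(u)-1|\le\varepsilon$. The paper combines these in the continuous function $h(\gamma)=\|f(\gamma)\|/\sqrt{|f(s(\gamma))|\,|f(r(\gamma))|}$. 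Its normalization cancels the possibly discontinuous masses: $h=1$ on units and $h\le\varepsilon/(1-\varepsilon)$ off them. Hence $G^{(0)}=\{h>1/2\}$ is open once $\varepsilon<1/3$. Your sketch never uses the off-diagonal coefficients of $\pi_u(f)$ or the continuity of $f$ at non-unit arrows, and without them this step does not go through.

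\textbf{The other steps.} (3)$\Rightarrow$(2)$\Rightarrow$(1) matches the paper. Your final appeal to the standard fact (open unit space plus Haar system gives \'etale) is fine; the paper proves it directly from openness of $r$ and $s$. Your compactness argument is correct and differs from the paper's: the multiplier $e\cdot 1$ acts in each $\pi_u$ as multiplication by $e\circ r$. Unlike the paper's route, it needs no prior discreteness.

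\textbf{Discreteness step.} As stated, your claim is false: a full-support Radon measure all of whose points are atoms need not have discrete support (e.g.\ $\delta_0+\sum_n2^{-n}\delta_{1/n}$ on $\{0\}\cup\{1/n\}$). What your Cauchy--Schwarz bound actually shows is that every nonempty open subset of $G_u$ has measure at least $1/(2\|f\|_\infty)$. Apply it to a unit vector supported in $V\setminus\{u\}$, not to $\mathbf 1_V$, where the atom at $u$ spoils the estimate. Together with outer regularity, this gives the discreteness you want.
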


\begin{proof}
Assume first that the reduced algebra is unital. Set $\varepsilon=1/4$
and choose $f\in C_c(G,L)$ with $\|f-1\|_r<\varepsilon$.
We use this one section throughout the argument.

\smallskip
\noindent\emph{The source fibres are discrete.}
Fix $u\in G^{(0)}$, and let $U\subseteq G_u$ be relatively compact
and open. Let $P_U$ be the projection onto
$\mathcal H_U=L^2(U,L,\lambda_u)$. The operator
$K_U=\left.P_U\pi_u(f)\right|_{\mathcal H_U}$ has kernel
$f(\gamma\eta^{-1})$, viewed as a map $L_\eta\to L_\gamma$,
for $\gamma,\eta\in U$. The measure $\lambda_u(U)$ is finite, and
\[
 \int_U\int_U\|f(\gamma\eta^{-1})\|^2
       \,d\lambda_u(\eta)\,d\lambda_u(\gamma)
 \leq \|f\|_\infty^2\lambda_u(U)^2<\infty.
\]
Using finitely many trivializations of $L$ over $\overline U$, this is
a square-integrable line-bundle kernel. Hence $K_U$ is Hilbert--Schmidt
and, in particular, compact.

Nondegeneracy of $\pi_u$ gives
\[
        \|K_U-I_{\mathcal H_U}\|<\varepsilon<1.
\]
Thus $K_U$ is invertible, so $\mathcal H_U$ is finite-dimensional.
Full support now forces $U$ to be finite. Indeed, if $U$ were infinite,
one could choose arbitrarily many disjoint nonempty open subsets of $U$
and nonzero square-integrable sections supported in them, producing
arbitrarily many orthogonal vectors in $\mathcal H_U$.
Every point of $G_u$ has a finite open neighbourhood. Since $G_u$ is
Hausdorff, it follows that $G_u$ is discrete.

\smallskip
\noindent\emph{The unit space is compact and open.}
Put $m(u)=\lambda_u(\{u\})>0$, where positivity follows from
discreteness and full support. Right invariance gives
\[
        \lambda_{s(\gamma)}(\{\gamma\})=m(r(\gamma)).
\]
Using normalized delta vectors in the regular representations, and
the canonical identification $L_u=\mathbb C$ at units, we obtain
\begin{align}
 |m(u)f(u)-1|&\leq\varepsilon
       &&(u\in G^{(0)}),\label{eq:unital-unit-coefficient}\\
 \sqrt{m(s(\gamma))m(r(\gamma))}\,\|f(\gamma)\|
       &\leq\varepsilon
       &&(\gamma\notin G^{(0)}).
       \label{eq:unital-off-unit-coefficient}
\end{align}
For the second inequality, the relevant coefficient is between
the delta vectors at $s(\gamma)$ and $\gamma$; these are orthogonal
when $\gamma$ is not a unit.

Equation~\eqref{eq:unital-unit-coefficient} shows that $f(u)\neq0$
for every unit. Therefore $G^{(0)}\subseteq\operatorname{supp}(f)$.
The unit space is closed in the Hausdorff groupoid $G$, so it is compact.

Define the continuous function
\[
 h(\gamma)=
 \frac{\|f(\gamma)\|}
 {\sqrt{|f(s(\gamma))|\,|f(r(\gamma))|}}.
\]
The denominator is everywhere positive, and $h(u)=1$ at every unit.
Moreover, \eqref{eq:unital-unit-coefficient} gives
$m(u)|f(u)|\geq1-\varepsilon$. Combining this with
\eqref{eq:unital-off-unit-coefficient}, we obtain
\[
        h(\gamma)\leq\frac{\varepsilon}{1-\varepsilon}
        =\frac13
        \qquad(\gamma\notin G^{(0)}).
\]
Consequently,
\[
        G^{(0)}=\{\gamma\in G:h(\gamma)>1/2\}
\]
is open.

A continuous Haar system of full support makes $r$ and $s$ open.
For example, for an open set $V\subseteq G$, the positivity sets of
the continuous functions
\[
        u\longmapsto\int_{G^u}\varphi\,d\lambda^u,
        \qquad
        0\leq\varphi\in C_c(G),\quad
        \operatorname{supp}(\varphi)\subseteq V,
\]
cover $r(V)$ and are contained in it; inversion gives the assertion
for $s$.
Now fix $\gamma\in G$. Since the unit space is open, continuity of
multiplication and inversion gives an open neighbourhood $V$ of
$\gamma$ such that
\[
        V^{-1}V\subseteq G^{(0)},
        \qquad
        VV^{-1}\subseteq G^{(0)}.
\]
These inclusions make both $r|_V$ and $s|_V$ injective.
Since they are open, $V$ is a bisection. Thus $G$ is \'etale,
proving \textup{(1)}$\Rightarrow$\textup{(3)}.

\smallskip
The regular quotient
$C^*(G,\Sigma;\lambda)\twoheadrightarrow C_r^*(G,\Sigma;\lambda)$
shows that \textup{(2)} implies \textup{(1)}.

Finally, assume \textup{(3)}. The function
$m(u)=\lambda_u(\{u\})$ is positive and continuous.
Indeed, Haar continuity applied to a function
$d\in C_c(G^{(0)})$, extended by zero to $G$, gives continuity of
$u\mapsto d(u)m(u)$; choosing $d=1$ near any prescribed unit proves
the assertion locally.
Since $G^{(0)}$ is compact and open, the section
\[
 e_\lambda(\gamma)=
 \begin{cases}
   m(u)^{-1}1_u,&\gamma=u\in G^{(0)},\\
   0,&\gamma\notin G^{(0)}
 \end{cases}
\]
belongs to $C_c(G,L)$. Here $1_u$ is the canonical unit vector in $L_u$.
Right invariance and the convolution formula give
$e_\lambda*a=a*e_\lambda=a$ for every $a\in C_c(G,L)$.
Thus both completions are unital, proving the remaining implications.
\end{proof}

\begin{remark}[Normalization of the Haar system]
\label{rem:haar-counting-normalization}
Under the equivalent conditions of
Theorem~\ref{thm:unitality-haar}, the given Haar system can be replaced
by counting measures without changing the reduced algebra up to
isomorphism. Explicitly, the map
\[
        (\Psi a)(\gamma)
        =\sqrt{m(r(\gamma))m(s(\gamma))}\,a(\gamma)
\]
is a $*$-isomorphism from the convolution algebra defined using
$\lambda$ onto the convolution algebra defined using counting measures.
For each unit $u$, the unitary
\[
 \mathcal U_u:L^2(G_u,L,\lambda_u)\longrightarrow\ell^2(G_u,L),
 \qquad
 (\mathcal U_u\xi)_\gamma=\sqrt{m(r(\gamma))}\,\xi_\gamma
\]
satisfies
\[
        \mathcal U_u\pi_u(a)\mathcal U_u^*
        =\lambda_u^{\mathrm{count}}(\Psi a).
\]
Hence $\Psi$ extends to an isomorphism
\[
        C_r^*(G,\Sigma;\lambda)\cong C_r^*(G,\Sigma),
\]
where the algebra on the right uses counting measures, as in the main
text. In particular, the section $e_\lambda$ is carried to the
canonical unit section on $G^{(0)}$.
\end{remark}

\begin{corollary}[Classification in the Haar-system setting]
\label{cor:haar-system-classification}
Let $M$ be a nonzero von Neumann algebra. Then $M$ is isomorphic,
as a $C^*$-algebra, to $C_r^*(G,\Sigma;\lambda)$ for a locally compact
Hausdorff groupoid with a continuous Haar system of full support and
a twist $\Sigma$ if and only if $M$ is subhomogeneous.

When these conditions hold, the groupoid may be chosen compact and
principal, the twist trivial, and the Haar system the counting system.
\end{corollary}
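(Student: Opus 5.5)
The proof splits into two implications; the real work is the forward direction, which reduces to the étale case and then to Theorem~\ref{thm:main}.

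\emph{Realization implies subhomogeneous.} Suppose $M\neq0$ and $M\cong C_r^*(G,\Sigma;\lambda)$ for a locally compact Hausdorff groupoid $G$ with a continuous Haar system $\lambda$ of full support.

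\begin{enumerate}
\item Since $M$ is a nonzero von Neumann algebra, the reduced algebra is unital.
\item The implication (1)$\Rightarrow$(3) of Theorem~\ref{thm:unitality-haar} then shows that $G$ is étale and $G^{(0)}$ is compact.
\item Remark~\ref{rem:haar-counting-normalization} gives $C_r^*(G,\Sigma;\lambda)\cong C_r^*(G,\Sigma)$, where the right-hand side is built with counting measures as in Section~\ref{sec:standard-inputs}.
\item Thus $M$ satisfies condition (1) of Theorem~\ref{thm:main}, so $M$ is subhomogeneous by the equivalence (1)$\Leftrightarrow$(2).
\end{enumerate}

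One small point needs checking: the groupoid is Hausdorff by hypothesis, which is exactly the setting of Theorem~\ref{thm:main}.

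\emph{Subhomogeneous implies realization.} Suppose $M$ is subhomogeneous. By Theorem~\ref{thm:main}, $M\cong C_r^*(G,\Sigma)$, where $G$ is the finite disjoint union of the pair groupoids $X_n\times I_n\times I_n$ with trivial twist. This $G$ is compact, principal, Hausdorff and étale. Equip it with the counting Haar system $\lambda^u=$ counting measure on $G^u$. Three things need to be checked.

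\begin{itemize}
\item The counting system is a Haar system. Left invariance is immediate. Continuity holds because, on an étale groupoid, $u\mapsto\sum_{\gamma\in G^u}\varphi(\gamma)$ is continuous for $\varphi\in C_c(G)$: decompose $\varphi$ over finitely many open bisections, and each piece gives $\varphi\circ(r|_U)^{-1}$ extended by zero, which is continuous.
\item It has full support, since every point of the discrete fibre $G^u$ has positive mass.
\item With counting measures, the regular representations $\pi_u$ of the appendix coincide with the $\lambda_u$ of Definition~\ref{def:reduced-twisted-algebra}, so $C_r^*(G,\Sigma;\lambda)=C_r^*(G,\Sigma)\cong M$.
\end{itemize}

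This also yields the supplementary assertions: the groupoid is compact and principal, the twist trivial, and the Haar system the counting system.

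\emph{Main obstacle.} I do not expect a serious obstacle, since all the substantive work sits in Theorem~\ref{thm:unitality-haar} and Theorem~\ref{thm:main}. The step needing the most care is the normalization in Remark~\ref{rem:haar-counting-normalization}: one must confirm that $m(u)=\lambda_u(\{u\})$ is positive and continuous, so that $\Psi$ and $\mathcal U_u$ are well defined and preserve $C_c(G,L)$. These facts are established in the proof of Theorem~\ref{thm:unitality-haar}, given compactness and openness of $G^{(0)}$.
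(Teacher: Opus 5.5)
Your proposal is correct and follows the paper's proof: unitality of $M$ together with Theorem~\ref{thm:unitality-haar} forces $G$ to be \'etale with compact unit space, Remark~\ref{rem:haar-counting-normalization} passes to counting measures, and Theorem~\ref{thm:main} then gives subhomogeneity. The converse uses the same compact principal pair-groupoid model with trivial twist and counting Haar system. Your extra checks, that the counting system is a continuous Haar system of full support and that its regular representations agree with those of Definition~\ref{def:reduced-twisted-algebra}, only make explicit what the paper leaves implicit.
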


\begin{proof}
If $M\cong C_r^*(G,\Sigma;\lambda)$, the reduced algebra is unital.
Theorem~\ref{thm:unitality-haar} forces $G$ to be \'etale with compact
unit space. Remark~\ref{rem:haar-counting-normalization} identifies this
algebra with the reduced algebra for counting measures, so
Theorem~\ref{thm:main} implies that $M$ is subhomogeneous.
Conversely, the compact principal groupoid model in
Theorem~\ref{thm:main}, with its trivial twist and counting Haar system,
gives the required realization.
\end{proof}

\bibliographystyle{amsalpha}

\providecommand{\bysame}{\leavevmode\hbox to3em{\hrulefill}\thinspace}
\providecommand{\MR}{\relax\ifhmode\unskip\space\fi MR }
\providecommand{\MRhref}[2]{%
  \href{http://www.ams.org/mathscinet-getitem?mr=#1}{#2}
}
\providecommand{\href}[2]{#2}

\end{document}